\renewcommand*\env@matrix[1][c]{\hskip -\arraycolsep
  \let\@ifnextchar\new@ifnextchar
  \array{*\c@MaxMatrixCols #1}}
\newcommand{\Ter}{\mathrm{Ter}}
\newcommand{\PP}{\mathbb{P}}
\newcommand{\kk}{\mathbb{K}}
\newcommand{\ZZ}{\mathbb{Z}}
\newcommand{\oo}{\mathcal{O}}
\newcommand{\h}{\mathcal{H}}
\newtheorem{theorem}{Theorem}
\numberwithin{theorem}{section}
\newtheorem{proposition}[theorem]{Proposition}
\newtheorem{lemma}[theorem]{Lemma}
\newtheorem{corollary}[theorem]{Corollary}
\theoremstyle{definition}
\newtheorem{definition}[theorem]{Definition}
\newtheorem{remark}[theorem]{Remark}
\newtheorem{example}[theorem]{Example}
\crefname{equation}{}{}
\crefname{equation}{}{}
\crefname{figure}{Figure}{Figure}
\crefname{section}{Section}{Section}
\crefname{lemma}{Lemma}{Lemma}
\crefname{proposition}{Proposition}{Proposition}
\crefname{theorem}{Theorem}{Theorem}
\crefname{corollary}{Corollary}{Corollary}
\crefname{definition}{Definition}{Definition}
\crefname{notation}{Notations}{Notation}
\crefname{remark}{Remark}{Remark}
\crefname{claim}{Claim}{Claim}
\crefname{assumption}{Assumption}{Assumption}
\crefname{example}{Example}{Example}
\newcommand{\p}{\mathbb{P}}
\newcommand{\N}{\mathbb{N}}
\newcommand{\ot}{\otimes}
\newcommand{\LL}{\mathcal{L}}
\newcommand{\Bl}{\mathop{\rm Bl}\nolimits}\newcommand{\PGL}{\mathop{\rm PGL}\nolimits}
\newcommand{\V}{\mathop{\rm V}\nolimits}
\newcommand{\SV}{\mathop{\rm SV}\nolimits}
\newcommand{\Res}{\mathop{\rm Res}\nolimits}
\newcommand{\codim}{\mathop{\rm codim}\nolimits}
\newcommand{\rd}[1]{\left\lfloor #1\right\rfloor}
\newcommand{\ru}[1]{\left\lceil #1\right\rceil}
\DeclareMathOperator{\rank}{rank}
\begin{document}

\title{Geometry of first nonempty Terracini loci}

\author[F. Galuppi]{Francesco Galuppi}
\address{Faculty of Mathematics, Informatics, and Mechanics, University of Warsaw, Banacha 2, 02-097 Warsaw, Poland}
\email{galuppi@mimuw.edu.pl (ORCID 0000-0001-5630-5389)}

\author[P. Santarsiero]{Pierpaola Santarsiero}
\address{Institute of Mathematics, Osnabrück University, Albrechtstr. 28a, 49074 Osnabrück, Germany
 }
\email{pierpaola.santarsiero@uni-osnabrueck.de (ORCID 0000-0003-1322-8752)}
\author[D. Torrance]{Douglas A. Torrance}
\address{Department of Mathematical Sciences, Piedmont University, Demorest, Georgia 30535, USA}
\email{dtorrance@piedmont.edu (ORCID 0000-0003-3999-4973)}

\author[E. Turatti]{Ettore Teixeira Turatti}
\address{Department of Mathematics and Statistics, UiT The Arctic University of Norway, Forskningsparken 1 B 485, Troms\o, Norway}
\email{ettore.t.turatti@uit.no (ORCID 0000-0003-4953-3994)}

\maketitle
\begin{abstract} After a few results on curves, we characterize the smallest nonempty Terracini loci of Veronese and Segre-Veronese varieties. For del Pezzo surfaces, we give a full description of the Terracini loci. Moreover, we present an algorithm to explicitly compute the Terracini loci of a given variety. 
\end{abstract}

\section{Introduction}
Secant varieties are a fruitful meeting ground between classical, projective geometry and modern applied mathematics. They provide solid geometric foundations for concepts like rank, decompositions and identifiability with respect to a variety, and find countless applications in mathematics as well as in physics, chemistry, statistics and computer science. A textbook reference on the connections between secant varieties and their applications is \cite{landsberg}.

Recently the tree of secant varieties sprouted a new branch: the \emph{Terracini loci} of a variety. Introduced in \cite{BC} and \cite{BBS}, they can be considered variations on the theme of secant varieties. Roughly speaking, given a nondegenerate projective variety $X$, a set of $r$ points of $X$ belongs to the $r$-Terracini locus of $X$ if the $r$ tangent spaces at these points are linearly dependent, that is if they span a linear space of dimension smaller than expected. Far from a purely abstract construction, the study of Terracini loci is motivated by very concrete questions about well-posedness of algorithms in \cite[Section 3]{BV18}.

Despite many recent results, the Terracini loci of a variety $X$ are still somewhat mysterious, even when $X$ is a smooth curve or surface. In particular, it is an open problem even to determine whether the Terracini loci of a projective variety  are empty or not, with the notable exception of \cite{LM23}, which solves the problem for linearly normal toric varieties.  As soon as the Terracini locus is nonempty, the next pressing question is to characterize it geometrically. Our aim is to provide an answer for some significant instances,
developing tools that shall permit a similar analysis for larger classes of  varieties. After providing the rigorous setup in \Cref{section: preliminaries}, in \Cref{section: basic} we prove basic properties of Terracini loci, such as its behaviour under inclusion and projection.  In our analysis we use tools from projective geometry, as well as standard techniques employed to handle zero-dimensional schemes. 
The paper contains the following contributions:
\begin{itemize}
\item In \Cref{section: curves} we consider smooth curves. \Cref{coroll: curves embedded with non-complete linear systems} gives a sufficient condition on degree and genus in order for the Terracini locus to be empty.
\item We give a detailed description of all Terracini loci of del Pezzo surfaces in \Cref{section: del Pezzo}. 
\item In \Cref{section: veronese surfaces} we study the smallest nonempty Terracini loci of Veronese varieties of any dimension and degree. We give an explicit geometric description in \Cref{propo: all we know about the Veronese}.
\item In \Cref{sec: SegreVeronese} we perform a similar analysis for the smallest nonempty Terracini loci of Segre-Veronese varieties in any number of factors, and describe it in \Cref{theorem: first nonempty SV}.
\item In \Cref{section: algorithms} we consider an algorithmic approach. We provide code to compute a Terracini locus of a variety, given either its ideal or a parametrization.
\end{itemize}

\subsection*{Historical remark} It is interesting to notice that the Terracini locus of a smooth curve $C\subset\p^3$ already appears in the literature. In \cite[Section 5]{Johnsen89} it is called the \emph{scheme of stationary bisecants}. In \cite[Section 3]{piene91} the author considers the intersection of two tangent lines in order to study what sort of projections to $\p^2$ are possible for $C$. In \cite[Section 2]{literature}, the existence of stationary bisecants is related to \emph{focal surfaces}.  \cite[Theorem 28]{AGI} links the Terracini locus of an elliptic normal curve and the rank with respect to that curve. We find a more applied viewpoint in \cite[Section 2]{ranstu}, where the union of stationary bisecants is called the \emph{edge surface} and it is employed to compute the convex hull of the real part of $C$. Finally we would like to mention \cite[Section 3]{halfdef}, where the study of tangent spaces with nonempty intersection leads to the definition of \emph{sub-defective} varieties.

\section{Definitions and preliminaries}\label{section: preliminaries}
As it is customary in the study of secant varieties, we work over an algebraically closed field $\kk$ of characteristic 0. Let $X\subset \mathbb{P}^N$ be an irreducible projective variety. It is not restrictive to assume that $X$ is \emph{nondegenerate}, meaning that its linear span is the whole $\p^N$. Set
\[\sigma_r^\circ(X)=\bigcup_{p_1,\ldots,p_r\in X} \langle p_1,\ldots, p_r\rangle\subseteq\p^N.  \]
The \emph{$r$th secant variety} of $X$, denoted by 
$\sigma_r(X)$, is the Zariski closure of $\sigma_r^\circ(X)$ in $\p^N$. By construction, there is a dominant map
\[
X^{r}\times\PP^{r-1}\dashrightarrow\sigma_r(X),\]
sending $((p_1,\dots,p_r),z)$ to the point corresponding to $z\in\langle  p_1,\ldots,p_r\rangle\subset\sigma_r(X)$. Hence
\begin{equation}\label{eq: expected dimension}
 \dim \sigma_r(X)\leq \min\{ r(\dim X+1)-1, N\}.   
\end{equation}
The number on the right hand side of \eqref{eq: expected dimension} is called the \emph{expected dimension} of $\sigma_r(X)$. If the inequality is strict, then we say that $X$ is \emph{$r$-defective}. A classical result to compute the dimension of secant varieties is Terracini's lemma, proven in \cite{terracini} - see \cite[Section 2]{dale} for a modern reference.

\begin{theorem}[Terracini's lemma]\label{thm: Terracini Lemma}
Let $X\subset\p^{N}$ be a nondegenerate irreducible variety. 
If $p_1,\ldots, p_r\in X$ are in general position and $z\in\langle p_1,\ldots, p_{r}\rangle$	is a general point, then the embedded tangent space to $\sigma_r(X)$ at $z$ is
$$ T_z\sigma_r(X) = \langle  T_{p_1}X,\ldots,  T_{p_{r}}X\rangle\subseteq\p^N.$$\end{theorem}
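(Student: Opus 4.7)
The plan is to pass to affine cones, parametrize $\sigma_r(X)$ by an addition map, and then use generic smoothness in characteristic zero to identify the Zariski tangent space of $\sigma_r(X)$ at $z$ with the image of the differential of that parametrization.

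First, I would let $\tilde X\subset\kk^{N+1}$ denote the affine cone over $X$, so that for a smooth $p\in X$ and a nonzero lift $\tilde p\in\tilde X$, the embedded tangent space $T_{p}X\subset\p^N$ is the projectivization of the linear subspace $T_{\tilde p}\tilde X\subset\kk^{N+1}$. Because $\tilde X$ is a cone, the projective weights appearing in the dominant rational map $X^r\times\p^{r-1}\dashrightarrow\sigma_r(X)$ can be absorbed into the affine lifts, so the affine cone $\widetilde{\sigma_r(X)}$ coincides with the Zariski closure of the image of the addition morphism
\[
f\colon \tilde X^r\longrightarrow\kk^{N+1},\qquad (\tilde p_1,\ldots,\tilde p_r)\longmapsto \tilde p_1+\cdots+\tilde p_r.
\]

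Next I would compute the differential of $f$ at a point where each $\tilde p_i$ is a smooth point of $\tilde X$. By linearity, $df$ sends $(v_1,\ldots,v_r)$ to $v_1+\cdots+v_r$ with $v_i\in T_{\tilde p_i}\tilde X$, and its image is therefore the sum $T_{\tilde p_1}\tilde X+\cdots+T_{\tilde p_r}\tilde X$. The third step is to invoke generic smoothness: since $\kk$ has characteristic zero, the dominant morphism $f$ onto $\widetilde{\sigma_r(X)}$ is separable, hence smooth on a nonempty open subset of $\tilde X^r$. For $(p_1,\ldots,p_r)$ in general position and $z$ general in their span, the point $\tilde z=f(\tilde p_1,\ldots,\tilde p_r)$ falls in the smooth locus of $\widetilde{\sigma_r(X)}$ and the differential of $f$ surjects onto $T_{\tilde z}\widetilde{\sigma_r(X)}$. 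Projectivizing the resulting equality
\[
T_{\tilde z}\widetilde{\sigma_r(X)}=T_{\tilde p_1}\tilde X+\cdots+T_{\tilde p_r}\tilde X
\]
yields the stated identity $T_z\sigma_r(X)=\langle T_{p_1}X,\ldots,T_{p_r}X\rangle$ in $\p^N$.

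The main delicate point I anticipate is the bookkeeping between the embedded projective tangent space of $X\subset\p^N$ and the Zariski tangent space of its affine cone, together with the verification that a general $z$ inside $\langle p_1,\ldots,p_r\rangle$ (rather than a general $\tilde z$ on the full cone $\widetilde{\sigma_r(X)}$) can still be taken inside the smooth open locus of $f$. This can be handled by rewriting $\sum_i\lambda_i\tilde p_i=f(\lambda_1\tilde p_1,\ldots,\lambda_r\tilde p_r)$ and observing that varying $z$ in the span corresponds to moving within the fibers of the projection $\tilde X^r\to X^r$, so the smooth locus of $f$ can be chosen to meet a generic such span. Once this compatibility is in place, the three steps above combine to give the theorem.
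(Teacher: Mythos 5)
The paper does not prove this statement: it is quoted as a classical result, with the proof deferred to Terracini's original article and to the modern reference cited alongside it. Your argument is the standard proof of that classical result --- pass to affine cones, realize the cone over $\sigma_r(X)$ as the closure of the image of the addition map $f(\tilde p_1,\dots,\tilde p_r)=\tilde p_1+\cdots+\tilde p_r$, compute that the image of $df$ is $T_{\tilde p_1}\tilde X+\cdots+T_{\tilde p_r}\tilde X$, and invoke generic smoothness to conclude that this image is the whole tangent space of the cone over $\sigma_r(X)$ at a general point --- and it is correct. You also correctly flag the one genuinely delicate step, namely that a general $z$ in the span of general $p_1,\dots,p_r$ (as opposed to a general point of $\sigma_r(X)$) still corresponds to a tuple $(\lambda_1\tilde p_1,\dots,\lambda_r\tilde p_r)$ lying in the dense open locus where $df$ is surjective; your resolution via the fibration over $X^r$ is the right one. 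Note that the appeal to generic smoothness is exactly where the standing hypothesis $\operatorname{char}\kk=0$ from \Cref{section: preliminaries} is used, and indeed the statement fails in positive characteristic.
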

When $p_i$ is smooth we have $\dim T_{p_i}X=\dim X$, so we expect that $\dim \langle  T_{p_1}X,\ldots,  T_{p_{r}}X\rangle=r(\dim X+1)-1$ if $r$ is sufficiently small. This is why we call this number the expected dimension of $\sigma_r(X)$. Terracini's lemma tells us that a variety is $r$-defective if and only if the tangent spaces to $X$ at $r$ general points span a linear space smaller than expected.

 \Cref{thm: Terracini Lemma} makes us wonder what happens if we relax the hypothesis that $p_1,\ldots,p_r$ are in general position and instead we only assume that they are smooth points of $X$. In other words, we are interested in the sets of $r$ smooth points of $X$ such that the span of the tangent spaces is smaller than expected. To be precise, let us fix some notation. Given a variety $X$, we denote by $X_{sm}$ the set of smooth points of $X$. The variety parametrizing unordered sets of $r$ points of $X$ is called the \emph{$r$th symmetric power} of $X$. It is the quotient of $\{(p_1,\dots,p_r)\in X^r\mid p_1,\dots,p_r\mbox{ are distinct}\}$ modulo the symmetric group and it is denoted by $X^{(r)}$. 
\begin{definition}\label{def: Terracini locus with tangent spaces}
Let $X\subset\p^N$ be a nondegenerate irreducible variety. The \emph{$r$-Terracini locus} of $X$ is
\[\Ter_r(X)=\overline{
\{
\{p_1,\ldots,p_r\}\in X_{sm}^{(r)}\mid  \dim\langle  T_{p_1}X,\ldots,  T_{p_{r}}X\rangle<r(\dim X+1)-1
\}}.
\]
The closure is taken in the symmetric power $X_{sm}^{(r)}$.
\end{definition}

\begin{remark}\label{rmk: we assume subabundant and nondefective}
In the study of Terracini loci it is not necessary to consider all values of $r$. For instance $\Ter_1(X)=\varnothing$, as observed also in \cite[Section 2]{BC}. Moreover, when $N+1<r(\dim X+1)$, the condition defining $\Ter_r(X)$ becomes empty 
and so $\Ter_r(X)=X^{(r)}$. For this reason we are only interested in values of $r$ satisfying
\begin{equation}\label{eq: subabundant}
2\le r\le \frac{N+1}{\dim X+1}.
\end{equation}
With this choice of $r$, \Cref{thm: Terracini Lemma} tells us that $X$ is $r$-defective if and only if all sets of $r$ tangent spaces to $X$ are linearly dependent, that is $\Ter_r(X)=X^{(r)}$.
\end{remark}
There is another definition of $\Ter_r(X)$, in terms of the cohomology of schemes of double points. If $S=\{p_1,\dots,p_r\}\subset X$ is a set of smooth points with ideal sheaf $I_{S,X}$, we denote by $2S$ the \emph{scheme of double points} supported at $S$. In other words, $2S$ is the subscheme of $X$ defined by the ideal sheaf $I_{2S,X}=(I_{S,X})^2$. Observe that
\begin{align*}
\codim&\langle  T_{p_1}X,\ldots,  T_{p_{r}}X\rangle =\dim\{H\in H^0(\oo_{\p^N}(1))\mid H\supseteq T_{p_i}X\mbox{ for every } i\in\{1,\ldots,r\}\}\\
&=\dim\{H\cap X\in H^0(\oo_{X}(1))\mid H\supseteq T_{p_i}X\mbox{ for every } i\in\{1,\ldots,r\}\}\\
&=\dim\{D\in H^0(\oo_{X}(1))\mid D\mbox{ is singular at }p_1,\ldots,p_r\}\\
&=h^0(I_{2S, X}\ot\oo_X(1)) 
= h^0(I_{2S, X}(1)).
\end{align*}
This means that \Cref{def: Terracini locus with tangent spaces} is equivalent to the following one.
\begin{definition}\label{def: Terracini locus with cohomology}
Let $X\subset\p^N$ be a nondegenerate irreducible variety. The \emph{$r$-Terracini locus} of $X$ is
\begin{align*}
\Ter_r(X)
=
\overline{
\{
S\in X_{sm}^{(r)}\mid  h^0 (I_{2S,X}(1))>N+1-r(\dim X+1)
\}}.
\end{align*}
The closure is taken in the symmetric power $X_{sm}^{(r)}$. Notice that when $h^1(\oo_{X}(1))=0$, then the structure exact sequence of sheaves
\[0\to I_{2S,X}(1)\to\oo_X(1)\to\oo_{2S}\to 0\]
implies that $\Ter_r(X)=\overline{
\{
S\in X_{sm}^{(r)}\mid  h^1 (I_{2S,X}(1))>0
\}}$. This will be the case in Sections \ref{section: del Pezzo}, \ref{section: veronese surfaces} and \ref{sec: SegreVeronese}.
\end{definition}

In the literature we find a few slightly different definitions. In \cite[Definition 2.1]{CG23} the set $S$ is required to be linearly independent. Some require that $I_{2S,X}(1)$ has not only a positive $h^1$, but also a positive $h^0$, like \cite[Definition 1.7]{BBS}, while other focus on a notion of minimality for the set of points, namely that $h^1(I_{2S',X}(1))=0$ for all $S'\subset S$, see \cite[Definition 3.1]{BBS} and \cite{BB23}. Here we follow the definition of \emph{closed Terracini locus} introduced in \cite[Definition 2.1]{BC}, which is more general because it allows us to consider not only $\oo_X(1)$ but any line bundle on $X$. However, in this paper we are interested in the very ample line bundle embedding $X$ in $\p^N$.

\begin{remark}\label{remark: bitangents}Under numerical assumption \eqref{eq: subabundant} we can describe the Terracini locus in terms of bitangent, tritangent and more generally multi-tangent linear spaces. Indeed,
\begin{align*}
\dim&\langle  T_{p_1}X,\ldots,  T_{p_{r}}X\rangle<r(\dim X+1)-1\\
&
\Leftrightarrow \exists\ L=\p^{r(\dim X+1)-2}\subseteq \p^N \mbox{ such that } L\supseteq T_{p_1}X,\ldots,T_{p_r}X\\
& \Leftrightarrow\exists\ L=\p^{r(\dim X+1)-2}\subseteq\p^N \mbox{ tangent to } X \mbox{ at } p_1,\ldots,p_r,
\end{align*}
hence we can interpret the Terracini locus as
\begin{equation*}
\Ter_r(X)=\overline{\{\{p_1,\ldots,p_r\}\in X_{sm}^{(r)}\mid \exists\  \p^{r(\dim X+1)-2}\subseteq\p^N \mbox{ tangent to } X \mbox{ at }p_1,\ldots,p_r\}}.
\end{equation*}
\end{remark}

We conclude this section by recalling another interpretation of the Terracini locus. Let
\[A\sigma_r^\circ=\{(q,S)\in \mathbb{P}^N\times X_{sm}^{(r)} \mid q\in \langle S \rangle =\p^{r-1} \},
\]
its closure, denoted $A\sigma_r(X)$, is the \emph{abstract $r$th secant variety} of $X$. Let $\pi_1 : A\sigma_r(X)\rightarrow \sigma_r(X)$ be the first projection map. As underlined in \cite[Section 1]{BC}, understanding the $r$-Terracini locus of $X$ corresponds to understanding where the differential of $\pi_1$ drops rank in $A\sigma_r^\circ$.

\section{Basic geometric properties}\label{section: basic}
In this section we prove some simple but fundamental properties of $\Ter_r(X)$. We will see what happens to the Terracini loci of a variety under basic geometric operations such as intersection, inclusion or projection from a point. We start by showing that $\dim\Ter_r(X)$ is increasing with respect to $r$.

\begin{proposition}\label{propos: Ter are nested}
Let $X\subset\p^N$ be a nondegenerate irreducible variety and let $r$ be an integer satisfying \eqref{eq: subabundant}. If $\Ter_r(X)\neq \varnothing$, then $ \dim\Ter_{r+1}(X)\geq \dim\Ter_r(X)+\dim X $.

\begin{proof}
Assume that $S\in\Ter_r(X)$ and let $x\in X_{sm}\setminus S$. Since the scheme $2S$ fails to impose independent conditions on $\oo_X(1)$, a fortiori the scheme $2S+2x$ fails as well, so  $S\cup\{x\}\in\Ter_{r+1}(X)$. Therefore we can define a rational map
\begin{center}\begin{tabular}{ccc}
$\Ter_r(X)\times X_{sm}$&$\dashrightarrow $&$\Ter_{r+1}(X)$\\
$(S,x)$&$\mapsto$ & $S\cup \{x\}$.
\end{tabular}\end{center}
It is defined outside of the locus $\{(S,x)\mid x\in S\}$. In order to conclude, it suffices to prove that the fibers are zero-dimensional. Assume that there are two sets $S,S'\in\Ter_r(X)$ and two points $x\in X_{sm}\setminus S$ and $y\in X_{sm}\setminus S'$ such that $S\cup\{x\}=S'\cup\{y\}$. Then $(S,x)$ and $(S',y)$ only differ by a permutation of the elements of $S\cup\{x\}$. In particular, there are only finitely many preimages.
\end{proof}
\end{proposition}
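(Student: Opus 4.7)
The plan is to construct an ``add-a-point'' rational map
\[
\phi \colon \Ter_r(X) \times X_{sm} \dashrightarrow \Ter_{r+1}(X), \qquad (S, x) \longmapsto S \cup \{x\},
\]
defined on the complement of the proper closed locus $\{(S,x) : x \in S\}$, and then invoke the fiber dimension theorem. Once $\phi$ is shown to be a well-defined dominant rational map with generically finite fibers, the inequality
\[
\dim \Ter_{r+1}(X) \geq \dim \Ter_r(X) + \dim X
\]
follows immediately.

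For well-definedness, take $S = \{p_1, \ldots, p_r\} \in \Ter_r(X)$ and $x \in X_{sm} \setminus S$. By \Cref{def: Terracini locus with tangent spaces}, $\dim \langle T_{p_1} X, \ldots, T_{p_r} X \rangle < r(\dim X + 1) - 1$. Since adjoining one more tangent space enlarges a projective span by at most $\dim X + 1$, we get
\[
\dim \langle T_{p_1} X, \ldots, T_{p_r} X, T_x X \rangle < r(\dim X + 1) - 1 + (\dim X + 1) = (r+1)(\dim X + 1) - 1,
\]
so $S \cup \{x\} \in \Ter_{r+1}(X)$. The strict inequality is preserved under Zariski closure, so $\phi$ indeed maps into $\Ter_{r+1}(X)$.

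For the fibers: given $T \in \phi(\Ter_r(X) \times X_{sm})$ of cardinality $r+1$, any preimage $(S, x)$ must satisfy $T = S \cup \{x\}$ with $x \notin S$, forcing $x \in T$ and $S = T \setminus \{x\}$. There are at most $r+1$ such decompositions, so every fiber is finite and, in particular, zero-dimensional.

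The only potential subtlety I can anticipate is the edge case where $r+1$ fails the subabundance bound \eqref{eq: subabundant}: then $\Ter_{r+1}(X) = X^{(r+1)}$ by \Cref{rmk: we assume subabundant and nondefective}, and $\dim X^{(r+1)} = (r+1)\dim X \geq \dim \Ter_r(X) + \dim X$ holds trivially. The substantive range is the subabundant regime, in which the rational-map argument above carries the proof with essentially no further calculation.
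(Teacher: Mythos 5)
Your proposal is correct and follows essentially the same route as the paper: the add-a-point rational map $(S,x)\mapsto S\cup\{x\}$ with finite fibers. The only cosmetic difference is that you verify well-definedness via the tangent-space definition (the span grows by at most $\dim X+1$) where the paper uses the equivalent cohomological phrasing (if $2S$ fails to impose independent conditions, so does $2S+2x$); note also that dominance of the map is neither needed nor used---finiteness of the fibers already gives the dimension of the image, which suffices for the inequality.
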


Next we prove that the operation of taking the Terracini locus is compatible with inclusions.

\begin{proposition}\label{propos: Ter of a subvariety}
Let $X\subset\p^N$ be a nondegenerate irreducible variety and let $Y$ be an irreducible subvariety of $X$. We consider the Terracini locus of $Y$ included in its linear span $\langle Y\rangle$. Let $r$ be an integer such that 
\[
2\le r\le \frac{N+1}{\dim X+1}.
\]
Then $\Ter_r(Y)\subseteq\Ter_r(X)$.
\begin{proof} Notice that, even under our numerical assumption, it may happen that $r>\frac{\dim\langle Y\rangle+1}{\dim Y+1}$. In that case, following the definition we have $\Ter_r(Y)=Y^{(r)}$. Let $\{p_1,\ldots,p_r\}\in\Ter_r(Y)$. By definition
\[\dim\langle  T_{p_1}Y,\ldots,  T_{p_{r}}Y\rangle<r(\dim Y+1)-1.\]
Without loss of generality we assume that $T_{p_1}Y$ intersects $\langle T_{p_2}Y,\ldots, T_{p_r}Y\rangle$. Now recall that $Y_{sm}\subseteq X_{sm}$ and $ T_{p_i}X\supseteq T_{p_i}Y$, hence
$ T_{p_1}X$ intersects $\langle T_{p_2}X,\ldots, T_{p_r}X\rangle$, 
and so $\{p_1,\ldots,p_r\}\in\Ter_r(X)$.
\end{proof}
\end{proposition}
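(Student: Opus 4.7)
The plan is to carry a tangent-space dependence at points of $Y$ up to a tangent-space dependence at the same points of $X$, using the inclusion $T_{p_i}Y\subseteq T_{p_i}X$ (which tacitly assumes $Y_{sm}\subseteq X_{sm}$, a hypothesis needed even to make sense of the inclusion of symmetric powers). Given $S=\{p_1,\dots,p_r\}\in\Ter_r(Y)$, I would first unpack what membership in $\Ter_r(Y)$ means. Two regimes arise from \Cref{rmk: we assume subabundant and nondefective}: if $r>(\dim\langle Y\rangle+1)/(\dim Y+1)$, then $\Ter_r(Y)=Y^{(r)}$, but one then observes that every $r$-tuple of tangent spaces to $Y$ lies inside $\langle Y\rangle$, whose dimension is strictly less than the expected span $r(\dim Y+1)-1$, so the tangent-dimension inequality still holds for all $S$. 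Otherwise, $S$ lies in the closure of the locus cut out by that same inequality, and since $\Ter_r(X)$ is itself closed in $X_{sm}^{(r)}$ it suffices to handle the open dense stratum.

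The heart of the argument is a short linear-algebra step. Viewing projective tangent spaces via their vector cones in $\kk^{N+1}$, the strict inequality
\[\dim\langle T_{p_1}Y,\dots,T_{p_r}Y\rangle<r(\dim Y+1)-1\]
says precisely that the sum of these cones is not direct; equivalently, after relabeling, there exists an index $j$ with $T_{p_j}Y\cap\langle T_{p_i}Y:i\neq j\rangle\neq\varnothing$. The inclusions $T_{p_i}Y\subseteq T_{p_i}X$ propagate this to
\[T_{p_j}X\cap\langle T_{p_i}X:i\neq j\rangle\neq\varnothing,\]
whence $\dim\langle T_{p_1}X,\dots,T_{p_r}X\rangle\le r(\dim X+1)-2$. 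The numerical assumption $r\le(N+1)/(\dim X+1)$ guarantees that the expected span dimension $r(\dim X+1)-1$ is at most $N$, so this drop is genuine rather than an artifact of being truncated by the ambient $\p^N$, and hence $S\in\Ter_r(X)$.

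The main obstacle is merely bookkeeping: separating the two regimes above, and reconciling the two closures (closure of the $Y$-condition in $Y_{sm}^{(r)}$ versus closure of the $X$-condition in $X_{sm}^{(r)}$). The first is subsumed in the second, since any subset of $Y_{sm}^{(r)}\subseteq X_{sm}^{(r)}$ is contained in its closure in the larger ambient space. Once the open-dense inclusion of the tangent-deficient strata is established, this routine observation finishes the argument, and no genuinely new geometric input beyond the inclusion $T_{p_i}Y\subseteq T_{p_i}X$ is needed.
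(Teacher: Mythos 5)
Your proof is correct and follows essentially the same route as the paper: translate the dimension drop into the statement that some $T_{p_j}Y$ meets the span of the others, then push this through the inclusions $T_{p_i}Y\subseteq T_{p_i}X$. You are in fact slightly more careful than the paper in the degenerate regime $r>(\dim\langle Y\rangle+1)/(\dim Y+1)$, where you observe that the tangent-dimension inequality still holds because all the $T_{p_i}Y$ lie in $\langle Y\rangle$, rather than invoking it ``by definition.''
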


\begin{remark}
As a consequence of \Cref{propos: Ter of a subvariety}, if $Y$ is an irreducible component of $X_1\cap X_2$, then $\Ter_r(Y)\subseteq\Ter_r(X_1)\cap\Ter_r(X_2)$ for all relevant values of $r$.
\end{remark}

Now we wonder how the Terracini locus behaves under a morphism. It is easy to see that linear automorphisms of $\p^N$ preserve the Terracini locus. More precisely, if $X\subset\p^N$ is a nondegenerate, irreducible variety and $f\in\PGL(\p^N)$, then $f$ induces an isomorphism $X^{(r)}\to f(X)^{(r)}$ mapping $\Ter_r(X)$ to $\Ter_r(f(X))$. The next result deals with linear projections. First we need an easy preliminary lemma, which is basically an exercise on Grassmann's formula.

\begin{lemma}\label{general if no 3 on a line}
Let $L_1,\ldots,L_r\subset\p^N$ be linear spaces of dimensions $n_1,\ldots,n_r$ respectively. Then $\dim\langle L_1,\ldots,L_r\rangle<n_1+\cdots+n_r+r-1$ if and only if there exists a linear space $V\subset\p^N$ of dimension $r-2$ such that $V\cap L_i\neq\varnothing$ for every $i\in\{1,\ldots,r\}$.
\begin{proof}
First assume that such $V$ exists. For every $i\in\{1,\ldots,r\}$, pick a point $v_i\in V\cap L_i$ and let $x_{i,1},\ldots,x_{i,n_i}\in L_i$ such that $L_i=\langle v_i,x_{i,1},\ldots,x_{i,n_i}\rangle$. Then
\[\dim\langle L_1,\ldots,L_r\rangle\le \dim \langle V,x_{1,1},\ldots,x_{1,n_1},\ldots,x_{r,1},\ldots,x_{r,n_r}\rangle\le \dim V+n_1+\cdots+n_r.\]
On the other hand, let us assume that $\dim\langle L_1,\ldots,L_r\rangle<n_1+\cdots+n_r+r-1$ and let us show the existence of $V$ by induction on $r$. If $r=2$, the Grassmann formula in the projective space tells us that $$\dim(L_1\cap L_2)=n_1+n_2-\dim\langle L_1,L_2\rangle\ge 0,$$
so it is enough to take $V$ to be any point in $L_1\cap L_2$. Now assume that $r\ge 3$ and that $\dim\langle L_1,\ldots,L_r\rangle\le n_1+\cdots+n_r+r-2$. Let $L=\langle L_1,\ldots,L_{r-1}\rangle$. There are two possibilities. If $\dim L< n_1+\cdots+n_{r-1}+r-2$, then by induction hypothesis there exists $W=\p^{r-3}$ that intersects $L_i$ for every $i\in\{1,\ldots,r-1\}$. Now it is enough to pick a point $x\in L_r\setminus L$ and take $V=\langle x,W\rangle$. If $\dim L= n_1+\cdots+n_{r-1}+r-2$, then once again we apply Grassmann's formula to get
\begin{align*}
\dim(L\cap L_r)
=\dim L+\dim L_r-\dim\langle L,L_r\rangle \ge 0,
\end{align*}
hence there is a point $x\in L_r$ and $r-1$ points $x_1\in L_1,\ldots,x_{r-1}\in L_{r-1}$ such that their span $V=\langle x_1,\ldots,x_{r-1}\rangle$ contains $x$. Then  $V=\p^{r-2}$ intersects $L_1,\ldots,L_r$.
\end{proof}
\end{lemma}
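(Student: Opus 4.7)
I would treat the two implications separately. The $(\Leftarrow)$ direction reduces to a direct span computation: given $V = \p^{r-2}$ meeting each $L_i$, I would pick $v_i \in V \cap L_i$ and extend to a spanning set $v_i, x_{i,1}, \ldots, x_{i,n_i}$ of $L_i$. Then $\langle L_1, \ldots, L_r\rangle$ is contained in the span of $V$ together with all the $x_{i,j}$, whose projective dimension is at most $(r-2) + n_1 + \cdots + n_r$, strictly less than the expected value $n_1+\cdots+n_r+r-1$.

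For the $(\Rightarrow)$ direction I would argue by induction on $r$. The base case $r=2$ is immediate from the projective Grassmann formula: $\dim(L_1 \cap L_2) = n_1 + n_2 - \dim\langle L_1, L_2\rangle \geq 0$, so any point of $L_1\cap L_2$ serves as $V = \p^0$. For the inductive step I would set $L := \langle L_1, \ldots, L_{r-1}\rangle$ and case-split on whether $\dim L$ equals its expected value $n_1 + \cdots + n_{r-1} + r - 2$. If $\dim L$ falls short of this value, the inductive hypothesis applied to $L_1,\ldots,L_{r-1}$ yields $W = \p^{r-3}$ meeting each of them; joining $W$ with a point of $L_r$ produces a $V$ with the required intersection property. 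If $\dim L$ attains its expected value, then applying Grassmann to $L$ and $L_r$ forces $L\cap L_r$ to be nonempty, and any point $x$ of this intersection, being in $L$, can be written as a linear combination of chosen points $x_i \in L_i$ for $i<r$; the span $\langle x_1,\ldots,x_{r-1}\rangle$ contains $x$ and meets each $L_i$, so it is the desired $V$.

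The main obstacle I anticipate is the bookkeeping to guarantee that $V$ has dimension exactly $r-2$ rather than something smaller. In both subcases of the induction, $V$ arises naturally with dimension at most $r-2$, and one must argue that, whenever this dimension is strictly smaller, one can enlarge $V$ by adjoining generic points of $\p^N$ without destroying the existing intersections with the $L_i$. A secondary technicality is checking that the two subcases of the induction are exhaustive, which follows from the fact that $n_1+\cdots+n_{r-1}+r-2$ is an upper bound for $\dim L$ (so $\dim L$ either equals it or falls short).
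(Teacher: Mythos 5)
Your proposal is correct and follows essentially the same route as the paper: the same direct span count for the ($\Leftarrow$) direction, and the same induction on $r$ with the same case split on whether $\dim\langle L_1,\ldots,L_{r-1}\rangle$ attains its expected value. The extra care you flag about enlarging $V$ to dimension exactly $r-2$ by adjoining generic points is a reasonable refinement of a step the paper leaves implicit, but it does not change the argument.
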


Before we state our result on linear projections, we recall that the \emph{tangential variety} of a variety $X\subset\p^N$ is
\[
\tau(X)=\overline{\bigcup_{x\in X_{sm}}T_xX}\subseteq\p^N.
\]
\begin{proposition}\label{propos: projections respect the terracini locus}
Let $N\ge 3$. Let $p\in\p^N$ and call $\pi:\p^N\setminus\{p\}\to\p^{N-1}$ the projection from $p$. Let $X\subset\p^N$ be a nondegenerate irreducible variety and let $r\in\N$ satisfying \[2\le r\le \frac{N}{\dim X+1}.\]
Let $S\subset X$ be a set of $r$ points.
\begin{enumerate}[label=(\arabic*),ref=(\arabic*)]
\item\label{item 1 proj} If $p\notin\sigma_2(X)$, then $S\in\Ter_r(X)\Rightarrow\pi(S)\in\Ter_r(\pi(X))$.
\item\label{item: center of projection away from tangential} If $p\notin\sigma_r(\tau(X))$, then $S\in\Ter_r(X)\Leftrightarrow\pi(S)\in\Ter_r(\pi(X))$.
\end{enumerate}
\begin{proof} Denote $Y=\pi(X)$. First of all,  we want to make sure that it makes sense to talk about $\Ter_r(Y)$. For both cases of the statement, our choice of $p$ ensures that $\pi$ restricts to a linear isomorphism between $X_{sm}$ and $Y_{sm}$. In particular, $X$ and $Y$ have the same dimension and are both irreducible and nondegenerate. Now, let $S=\{x_1,\ldots,x_r\}$ be a general element of $\Ter_r(X)$. Then $x_1,\ldots,x_r$ are $r$ smooth points of $X$ and $p\notin T_{x_i}X$ for any $i\in\{1,\ldots,r\}$. If we call $y_i=\pi(x_i)$, then $y_1,\ldots,y_r$ are smooth points for $Y$ and $T_{y_i}Y=\pi(T_{x_i}X)$. Since $S
\in\Ter_r(X)$ is general,  then $T_{x_1}X$ intersects $\langle T_{x_2}X,\ldots,T_{x_r}X\rangle$. As a consequence, $T_{y_1}Y$ intersects $\langle T_{y_2}Y,\ldots,T_{y_r}Y\rangle$. Hence $\{y_1,\ldots,y_r\}\in\Ter_r(Y)$. This proves statement \ref{item 1 proj} and one implication of statement \ref{item: center of projection away from tangential}.

Consider now the converse statement of \cref{item: center of projection away from tangential}. Let $n=\dim X=\dim Y$. Assume by contradiction that there exist $x_1,\ldots,x_r\in X$ such that $\{x_1,\ldots,x_r\}\notin\Ter_r(X)$ but their images $y_1=\pi(x_1),\ldots, y_r=\pi(x_r)$ satisfy $\{y_1,\ldots,y_r\}\in\Ter_r(Y)$. This means that $\dim\langle T_{x_1}X,\ldots,T_{x_r}X\rangle=rn+r-1$ but $\dim\langle T_{y_1}Y,\ldots,T_{y_r}Y\rangle<rn+r-1$. By \cref{general if no 3 on a line}, there exist $t_1\in T_{x_1}X,\ldots, t_r\in T_{x_r}X$ such that their span $\langle t_1,\ldots,t_r\rangle$ has dimension $r-1$, but the span of their images $V=\langle \pi(t_1),\ldots,\pi(t_r)\rangle$ has dimension $r-2$. By definition of $\pi$, we have that $p\in \pi^{-1}(V)=\langle t_1,\ldots,t_r\rangle$. Since $t_1,\ldots,t_r\in\tau(X)$, this implies that $p\in\sigma_r(\tau(X))$, a contradiction.
\end{proof}
\end{proposition}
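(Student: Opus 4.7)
The plan is to handle (1) and the forward direction of (2) together as a simple functoriality argument, and then treat the reverse direction of (2) separately, with \Cref{general if no 3 on a line} as the main tool. Before any of that, I would verify that $Y := \pi(X) \subset \p^{N-1}$ is a nondegenerate irreducible variety of the same dimension as $X$, and that $\pi$ induces an isomorphism $T_x X \to T_{\pi(x)} Y$ for every smooth $x \in X$. The key containment is $T_x X \subseteq \tau(X)$: in case (1) this gives $p \notin \sigma_2(X) \supseteq \tau(X) \supseteq T_x X$, while in case (2) it gives $p \notin \sigma_r(\tau(X)) \supseteq \tau(X) \supseteq T_x X$ (using $r \ge 1$). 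Nondegeneracy of $Y$ follows because a hyperplane $H \subset \p^{N-1}$ containing $Y$ would pull back to a hyperplane $\langle p, H\rangle$ in $\p^N$ containing $X$.

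For the implication $S \in \Ter_r(X) \Rightarrow \pi(S) \in \Ter_r(Y)$ I would just note that linear projection cannot increase the dimension of a linear span. Writing $n = \dim X = \dim Y$, $S = \{x_1,\dots,x_r\}$, and $y_i = \pi(x_i)$,
\[
\dim \langle T_{y_1}Y, \dots, T_{y_r}Y\rangle = \dim\langle \pi(T_{x_1}X), \dots, \pi(T_{x_r}X)\rangle \le \dim\langle T_{x_1}X,\dots,T_{x_r}X\rangle < r(n+1) - 1,
\]
so $\pi(S) \in \Ter_r(Y)$. This covers (1) and one direction of (2).

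For the converse in (2), the plan is a proof by contradiction. Assume $\pi(S) \in \Ter_r(Y)$ but $S \notin \Ter_r(X)$. Applying \Cref{general if no 3 on a line} to the configuration $T_{y_1}Y,\dots,T_{y_r}Y$ yields a linear space $V \cong \p^{r-2} \subset \p^{N-1}$ meeting each $T_{y_i}Y$; choose $w_i \in V \cap T_{y_i}Y$ and lift uniquely to $t_i \in T_{x_i}X$ via the isomorphism $\pi|_{T_{x_i}X}$. Since $S \notin \Ter_r(X)$, the tangent spaces $T_{x_1}X,\dots,T_{x_r}X$ span the maximal expected dimension, so by the contrapositive of \Cref{general if no 3 on a line} no $\p^{r-2}$ in $\p^N$ simultaneously meets all of them; in particular $\dim\langle t_1,\dots,t_r\rangle \ge r-1$. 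On the other hand $\pi(\langle t_1,\dots,t_r\rangle) = \langle w_1,\dots,w_r\rangle \subseteq V$ has dimension at most $r-2$. Since projection from $p$ strictly drops the dimension of a linear space only when $p$ belongs to it, we conclude $p \in \langle t_1,\dots,t_r\rangle$. Each $t_i$ lies in $\tau(X)$, so this forces $p \in \sigma_r(\tau(X))$, contradicting the hypothesis of (2).

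The main obstacle I anticipate is threading the two uses of \Cref{general if no 3 on a line} correctly: one applied downstairs to $Y$ to produce the auxiliary space $V$, and one applied upstairs to $X$ in contrapositive form to force $\langle t_1,\dots,t_r\rangle$ to be at least $(r-1)$-dimensional. A small but essential sanity check is that the lifts $t_i$ are unambiguous, which holds precisely because $p \notin T_{x_i}X$ makes $\pi$ injective on each tangent space, as noted above.
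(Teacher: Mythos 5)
Your proposal is correct and follows essentially the same route as the paper: the same preliminary checks that $\pi$ is an isomorphism on $X_{sm}$ and on each tangent space, and the same contradiction argument for the converse of (2) via \Cref{general if no 3 on a line} applied downstairs to produce $V$ and upstairs to force $\dim\langle t_1,\dots,t_r\rangle=r-1$, concluding $p\in\langle t_1,\dots,t_r\rangle\subseteq\sigma_r(\tau(X))$. Your forward direction is in fact slightly cleaner than the paper's (you use directly that projection cannot increase the dimension of a span, rather than the ``$T_{x_1}X$ meets the span of the others'' formulation), but this is a cosmetic difference, not a different method.
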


\section{Terracini loci of curves}\label{section: curves}

This section is devoted to the study of Terracini loci of curves. We address a most basic problem: determining whether the Terracini loci are empty or not. Even for curves, it is not easy to get a complete answer. We give a sufficient condition for emptiness in \Cref{coroll: curves embedded with non-complete linear systems}. When the ambient space has odd dimension, \cite[Proposition 7]{BC23} classifies curves for which every Terracini locus is empty. In \Cref{propos: odd-dimensional ambient space} we present a slight improvement of this result, while in \Cref{corol: rational and elliptic normal curves} we show that this classification does not hold when the ambient space has even dimension.

\begin{proposition}\label{pro: empty terracini for curves embedded with complete linear systems}
Let $C$ be a smooth irreducible curve of genus $g$ and let $D$ be a very ample divisor of $C$.
Consider the embedding of $C$ in $\p^N=\p(H^0\oo_C(D)^\vee)$ and let $r\in\{2,\ldots,\rd{\frac{N+1}{2}}\}$. If $2r<N-g+2$, then $\Ter_r(C)=\varnothing$.
\begin{proof}
Recall that in the embedding $C\subset\p(H^0(\oo_C(D))^\vee)$, the hyperplane section of $C$ is equivalent to $D$, that is, $\oo_C(1)=\oo_C(D)$. Let us start by proving that $h^1(\oo_C(D))=0$. The short exact sequence of sheaves
\[0\to I_{C,\p^N}(1)\to\oo_{\p^N}(1)\to\oo_C(1)\to 0\]
gives an exact sequence of vector spaces
\[H^1(\oo_{\p^N}(1))\to H^1(\oo_C(1))\to H^2(I_{C,\p^N}(1)).\]
The left hand side is 0 by \cite[Theorem III.5.1]{hartshorne}, while the right hand side is 0 because $\dim C=1<2$ - see \cite[Theorem III.2.7]{hartshorne}.  Hence $h^1(\oo_C(D))=h^1(\oo_C(1))=0$. Apply Riemann-Roch theorem to the divisor $D$ to get $N+1=h^0(\oo_C(D))=h^1(\oo_C(D))+\deg(D)-g+1$, thus
\begin{equation}\label{eq: d=N+g}
\deg(D)=N+g.
\end{equation}
If $S\subset C$ is a set of $r$ distinct points and $K_C$ is the canonical divisor of $C$, then by definition
\begin{align*}
S\in\Ter_r(C)&\Leftrightarrow h^1(\oo_C(D)\ot I_{2S,C})>0\\
&\Leftrightarrow h^1(\oo_C(D-2S))>0\Leftrightarrow h^0(\oo_C(K_C-D+2S))>0,
\end{align*}
where the last equivalence comes from Serre duality - see \cite[Corollary III.7.8]{hartshorne}. In order to conclude that $\Ter_r(C)$ is empty, it is enough to show that $h^0(\oo_C(K_C-D+2S))=0$. Thanks to \cite[Lemma IV.1.2]{hartshorne}, it suffices to check that $K_C-D+2S$ has negative degree. By combining equality \eqref{eq: d=N+g} and our hypothesis on $r$, we get
\[
\deg(K_C-D+2S)=2g-2-\deg(D)+2r=g-2-N+2r<0.\qedhere
\]
\end{proof}
\end{proposition}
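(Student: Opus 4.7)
The plan is to invoke the cohomological reformulation of Terracini loci from \Cref{def: Terracini locus with cohomology}: once we verify $h^1(\oo_C(1))=0$, a set $S$ of $r$ smooth points lies in $\Ter_r(C)$ exactly when $h^1(I_{2S,C}(1))>0$. Since $C$ is a smooth curve, $I_{2S,C}(1)$ is the line bundle $\oo_C(D-2S)$, so the entire question reduces to a vanishing statement for a single line bundle on a curve.

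First I would establish $h^1(\oo_C(D))=0$. Because $D$ is very ample and we embed by the complete linear system $|D|$, we have $\oo_C(1)=\oo_C(D)$. The structure sequence
\[0\to I_{C,\p^N}(1)\to\oo_{\p^N}(1)\to\oo_C(1)\to 0\]
yields a long exact sequence whose relevant segment is $H^1(\oo_{\p^N}(1))\to H^1(\oo_C(1))\to H^2(I_{C,\p^N}(1))$. The first term vanishes by standard Bott-type vanishing on projective space, and the third vanishes essentially because $C$ is a curve. With $h^1(\oo_C(D))=0$ in hand, Riemann-Roch applied to $D$ gives the key numerical identity $\deg(D)=N+g$.

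Next, for a set $S$ of $r$ distinct smooth points I would apply Serre duality to rewrite
\[h^1(\oo_C(D-2S))=h^0(\oo_C(K_C-D+2S)).\]
The degree computation $\deg(K_C-D+2S)=(2g-2)-(N+g)+2r=2r-N+g-2$ combined with the hypothesis $2r<N-g+2$ shows this degree is strictly negative. Since a line bundle of negative degree on an integral projective curve has no global sections, $h^0$ vanishes and therefore $h^1(I_{2S,C}(1))=0$ for every choice of $S$. We conclude that $\Ter_r(C)=\varnothing$.

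I do not foresee any substantive obstacle: after framing the problem via \Cref{def: Terracini locus with cohomology}, the argument is a mechanical application of Riemann-Roch and Serre duality, and the hypothesis $2r<N-g+2$ is precisely engineered to make the final degree strictly negative. The only preliminary step requiring care is the vanishing $h^1(\oo_C(1))=0$, which is a standard consequence of embedding by a complete very ample linear system.
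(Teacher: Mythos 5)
Your proposal is correct and follows essentially the same path as the paper's proof: vanishing of $h^1(\oo_C(1))$ via the structure sequence, Riemann--Roch to get $\deg(D)=N+g$, and Serre duality plus the negative-degree criterion to kill $h^0(\oo_C(K_C-D+2S))$. No substantive differences to report.
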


Let us apply \Cref{pro: empty terracini for curves embedded with complete linear systems} to low-genus curves.
\begin{example}
\label{corol: rational and elliptic normal curves}If $C\subset\p^N$ is the rational normal curve of degree $N$, then $\Ter_r(C)=\varnothing$ for every $r\in\{2,\ldots,\rd{\frac{N+1}{2}}\}$. Hence we recover the fact that $r$ double points always impose independent conditions on divisors of $\p^1$. When $N$ is odd, this property actually characterizes rational normal curves, as shown in \Cref{propos: odd-dimensional ambient space}. 

If $C\subset\p^N$ is a degree $N+1$ elliptic normal curve, then $\Ter_r(C)=\varnothing$ for every $r\in\{2,\ldots,\rd{\frac{N}{2}}\}$. When $N$ is even, this means that $\Ter_r(C)=\varnothing$ for all values of $r$ satisfying \eqref{eq: subabundant}, hence \Cref{propos: odd-dimensional ambient space} does not generalize to even-dimensional spaces.  When $N$ is odd, this means that all relevant Terracini loci of $C$ are empty, except for the last one. 
\end{example}

\cref{pro: empty terracini for curves embedded with complete linear systems} works only for curves embedded with a complete linear system. We combine it with \cref{propos: projections respect the terracini locus} to obtain a more general result.

\begin{corollary}\label{coroll: curves embedded with non-complete linear systems} Let $C$ be a smooth irreducible curve of genus $g$. Consider a divisor $D$ of $C$ and a very ample linear system $\LL\subseteq H^0(\oo_C(D))$ embedding $C$ in $\p(\LL^\vee)=\p^n$. Let $r\in\{2,\ldots,\rd{\frac{n+1}{2}}\}$. If
\[2r<h^0(\oo_C(D))-g+1\mbox{ and } 3r<n+2,\]
then $\Ter_r(C)=\varnothing$.
\begin{proof}
Let $N=h^0(\oo_C(D))-1$ and choose a basis $e_0,\ldots,e_N$ of $H^0(\oo_C(D))^\vee$ such that $\langle e_0,\ldots,e_n\rangle=\LL^\vee$. Then the embedding $C\subset\p(\LL^\vee)$ factors as
\[\begin{tikzcd}
\p(\langle e_0,\ldots,e_N\rangle) \arrow{r}{\pi_N}  & \p(\langle e_0,\ldots,e_{N-1}\rangle) \arrow{r}{\pi_{N-1}}& \ldots \arrow{r}{\pi_{n+1}}& \p(\langle e_0,\ldots,e_{n}\rangle) \arrow[equal]{d}\\
C \arrow[hook]{u}\arrow[hook]{rrr} &  & & \p(\LL^\vee)
\end{tikzcd}
\]
where $\pi_i$ is the projection on the first $i$ coordinates. Call $C_i$ the above embedding of $C$ in $\p(\langle e_0,\ldots,e_i\rangle)$. Thanks to \cref{pro: empty terracini for curves embedded with complete linear systems}, our first hypothesis on $r$ guarantees that $\Ter_r(C_N)=\varnothing$. In order to conclude, we will apply \cref{propos: projections respect the terracini locus}. Each time we project from $\p(\langle e_0,\ldots,e_i\rangle)$ to $\p(\langle e_0,\ldots,e_{i-1}\rangle)$, the $r$-Terracini locus is preserved provided that the center of projection is outside $\sigma_r(\tau(C_i))$. Since $C$ is a curve, $\tau(C_i)$ is a surface so $\dim\sigma_r(\tau(C_i))\le 3r-1$. Our second hypothesis on $r$ is enough to be sure that $\sigma_r(\tau(C_i))\subsetneq\p(\langle e_0,\ldots,e_i\rangle)$ for every $i\in\{n+1,\ldots,N\}$. This means that we can preserve the emptiness of $\Ter_r(C_i)$ along all the projections, and we conclude that $\Ter_r(C_n)=\varnothing$.
\end{proof}
\end{corollary}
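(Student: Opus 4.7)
The plan is to reduce the given non-complete embedding to the complete case already settled by \Cref{pro: empty terracini for curves embedded with complete linear systems} by factoring it as a chain of linear projections from points, and then propagating emptiness step by step via \Cref{propos: projections respect the terracini locus}\ref{item: center of projection away from tangential}.

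First I would set $N=h^0(\oo_C(D))-1$ and fix a basis $e_0,\ldots,e_N$ of $H^0(\oo_C(D))^\vee$ whose first $n+1$ elements span $\LL^\vee$. This expresses the given embedding $C\subset\p(\LL^\vee)$ as the complete embedding $C_N\hookrightarrow\p^N$ followed by $N-n$ successive projections from points $\pi_i\colon\p^i\dashrightarrow\p^{i-1}$, each one forgetting a coordinate. The first hypothesis $2r<h^0(\oo_C(D))-g+1$ is exactly the inequality $2r<N-g+2$ required by \Cref{pro: empty terracini for curves embedded with complete linear systems}, so $\Ter_r(C_N)=\varnothing$.

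Next I would propagate this emptiness downward through the projections. At step $i$, the center of $\pi_i$ is a single point in $\p^i=\p(\langle e_0,\ldots,e_i\rangle)$; if it avoids $\sigma_r(\tau(C_i))$, then \Cref{propos: projections respect the terracini locus}\ref{item: center of projection away from tangential} transfers emptiness from $\Ter_r(C_i)$ to $\Ter_r(C_{i-1})$. Since $C_i$ is a curve, $\tau(C_i)$ has dimension at most two, so $\dim\sigma_r(\tau(C_i))\le 3r-1$. The second hypothesis $3r<n+2$ then ensures $3r-1<n+1\le i$ for every $i\in\{n+1,\ldots,N\}$, so $\sigma_r(\tau(C_i))$ is a proper closed subvariety of $\p^i$ at every stage.

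The main subtlety I expect is that the chain of projection centers is not entirely at our disposal: their composition must recover the given embedding, so we are only free in how we extend a basis of $\LL^\vee$ to one of $H^0(\oo_C(D))^\vee$. What saves the argument is that at each intermediate stage the bad locus $\sigma_r(\tau(C_i))$ is a \emph{proper} closed subvariety of $\p^i$, so a sufficiently general completion of the basis keeps every intermediate center outside its corresponding bad locus. Iterating \Cref{propos: projections respect the terracini locus}\ref{item: center of projection away from tangential} from $i=N$ down to $i=n+1$ then yields $\Ter_r(C)=\Ter_r(C_n)=\varnothing$.
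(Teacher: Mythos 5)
Your proposal is correct and takes essentially the same route as the paper: factor the given embedding through the complete one, apply \Cref{pro: empty terracini for curves embedded with complete linear systems} to get $\Ter_r(C_N)=\varnothing$, and propagate emptiness down a chain of point projections via \Cref{propos: projections respect the terracini locus}, using $\dim\sigma_r(\tau(C_i))\le 3r-1<n+1\le i$. The one point where you go beyond the paper---observing that the centers are not free but must come from a flag in the annihilator of $\LL$ inside $H^0(\oo_C(D))^\vee$---is a genuine subtlety that the paper passes over silently; note, however, that your fix (``a sufficiently general completion of the basis'') only lets the centers vary within that fixed annihilator, so like the paper it implicitly assumes that $\sigma_r(\tau(C_i))$ does not contain the relevant linear space, something the ambient dimension count $3r-1<i$ by itself does not guarantee.
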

The bound from \cref{coroll: curves embedded with non-complete linear systems} is not sharp. Indeed, let $C\subset \p^7$ be a smooth, rational curve of degree 8. The relevant values of $r$ are 2, 3 and 4. By \cite[Proposition 7]{BC23} we know that $\Ter_4(C)\neq\varnothing$. Explicit software computations -- see \cref{section: algorithms} -- show that $\Ter_2(C)=\Ter_3(C)=\varnothing$. However, \cref{coroll: curves embedded with non-complete linear systems} only goes as far as telling us that $\Ter_2(C)=\varnothing$.

In the case of an odd dimensional ambient space we can go further. The following characterization extends \cite[Proposition 7]{BC23} and vastly generalizes the result on curves in $\p^3$ from \cite[Section 5]{Johnsen89}.
\begin{proposition}\label{propos: odd-dimensional ambient space}
    Let $C\subset\p^{2k+1}$ be a smooth, nondegenerate, irreducible curve. Then
    $$ C \hbox{ is a rational normal curve}\Leftrightarrow \Ter_{k+1}(C)=\varnothing \Leftrightarrow\Ter_{k+1}(C) \hbox{ is a finite set}.   $$
\end{proposition}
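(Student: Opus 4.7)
The plan is to prove the four implications (i) $\Rightarrow$ (ii), (ii) $\Rightarrow$ (iii), (iii) $\Rightarrow$ (ii) and (ii) $\Rightarrow$ (i), where (i), (ii), (iii) abbreviate the three conditions of the statement. The direction (i) $\Rightarrow$ (ii) is already contained in \Cref{corol: rational and elliptic normal curves}: for the rational normal curve of degree $2k+1$ in $\p^{2k+1}$, \cref{pro: empty terracini for curves embedded with complete linear systems} applies with $g=0$, $N=2k+1$ and $r=k+1$, giving $2r=2k+2<2k+3=N-g+2$. The direction (ii) $\Rightarrow$ (iii) is immediate.

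The heart of the proof is (iii) $\Rightarrow$ (ii): I would show that a nonempty $\Ter_{k+1}(C)$ is automatically positive-dimensional, so that finiteness forces emptiness. The strategy is to realise $\Ter_{k+1}(C)$ as the degeneracy locus of a morphism between vector bundles of equal rank. On the open subset $U\subseteq C^{(k+1)}$ parametrising $(k+1)$-tuples of pairwise distinct points, the universal scheme of double points $2\mathcal{S}\subset U\times C$ is flat over $U$ of relative length $2(k+1)=2k+2$, so $\mathcal{E}:=\pi_*\bigl(\pi^*\oo_C(1)\ot\oo_{2\mathcal{S}}\bigr)$ is locally free of rank $2k+2$ on $U$. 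The embedding linear system $\LL\subseteq H^0(\oo_C(1))$ has the same dimension $N+1=2k+2$, and evaluation gives a morphism
\[
\phi\colon \LL\ot\oo_U \longrightarrow \mathcal{E}
\]
between bundles of equal rank. Via \cref{def: Terracini locus with cohomology}, $\Ter_{k+1}(C)\cap U$ is exactly the vanishing locus of $\det\phi$, a section of a line bundle on $U$; hence it is either empty, or all of $U$, or a Cartier divisor of pure dimension $k$. The middle case is equivalent to $\Ter_{k+1}(C)=C^{(k+1)}$, i.e.\ to $C$ being $(k+1)$-defective, which is impossible by the classical non-defectivity of smooth curves (see, e.g., \cite{landsberg}). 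Since $N\ge 3$ forces $k\ge 1$, the remaining case produces an infinite set, and therefore ``finite'' forces ``empty''.

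To complete the loop via (ii) $\Rightarrow$ (i), assume $\Ter_{k+1}(C)=\varnothing$. The contrapositive of \cref{propos: Ter are nested} applied to a curve reads: if $\dim\Ter_{r+1}(C)<\dim\Ter_r(C)+1$, then $\Ter_r(C)=\varnothing$. Starting from $\Ter_{k+1}(C)=\varnothing$ and descending inductively, one obtains $\Ter_r(C)=\varnothing$ for every admissible $r\in\{2,\dots,k+1\}$. The hypothesis of \cite[Proposition 7]{BC23} is then satisfied, and we conclude that $C$ is a rational normal curve.

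The main obstacle is making the determinantal description in the central step rigorous; however, since \cref{def: Terracini locus with tangent spaces} defines $\Ter_{k+1}(C)$ as the closure in $C^{(k+1)}$ of a locus contained in $U$, it is enough to carry out the argument on $U$ and then pass to closures, so no delicate analysis along the diagonals is needed.
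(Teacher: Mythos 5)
Your proposal is correct, and the two outer implications match the paper's treatment (both ultimately lean on \cite{BC23} for the equivalence with being a rational normal curve, and on \cref{pro: empty terracini for curves embedded with complete linear systems} for the easy direction). The interesting divergence is in the central step, finiteness implies emptiness. The paper splits into two cases according to whether $\Ter_k(C)$ is empty: if not, \cref{propos: Ter are nested} immediately gives $\dim\Ter_{k+1}(C)\ge 1$; if so, every $k$-tuple $S$ has tangent lines spanning a $\p^{2k-1}=L_S$, and the paper builds the morphism $\phi_S\colon C\to\h_S$ to the pencil of hyperplanes through $L_S$ and argues (rather tersely) that when $C$ is not a rational normal curve a generic $S$ extends to an element of $\Ter_{k+1}(C)$, forcing dimension at least $k$. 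You instead avoid the case split entirely by exhibiting $\Ter_{k+1}(C)$ on the configuration space $U=C^{(k+1)}$ as the vanishing of $\det\phi$ for an evaluation map between rank-$(2k+2)$ bundles, so that it is empty, everything, or pure of codimension one (hence of dimension $k\ge 1$); the middle case is killed by the classical non-defectivity of curves via \cref{thm: Terracini Lemma}. This is a clean and fully rigorous argument, and it yields slightly more than the paper's proof, namely that every component of a nonempty $\Ter_{k+1}(C)$ has dimension exactly $k$; what it gives up is the explicit geometric description via the pencil $\h_S$, which the paper inherits from \cite{BC23} and which makes the link to that reference transparent. One small caveat: your reduction of the implication $\Ter_{k+1}(C)=\varnothing\Rightarrow$ rational normal curve to ``all Terracini loci are empty'' via downward induction with \cref{propos: Ter are nested} is fine, but note that the paper attributes the full first equivalence directly to \cite[Proposition 7]{BC23}, so this extra step is only needed if that result is stated in the ``all loci empty'' form.
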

\begin{proof}
The first equivalence of the statement is \cite[Proposition 7]{BC23}. Now we want to prove that if $\Ter_{k+1}(C)$ is finite, then it is actually empty. Let us distinguish two cases. If $\Ter_k(C)\neq \varnothing$, then  by Proposition \ref{propos: Ter are nested} $\dim \Ter_{k+1}(C)\geq \dim \Ter_k(C)+\dim C\geq 1$, and this is a contradiction. Hence now we assume that $\Ter_{k}(C)$ is empty. 
    Let $S=\{x_1,\ldots,x_k\}\subset C$ be a set of $k$ points of $C$ and let 
\[L_S=\langle T_{x_1}C,\ldots,T_{x_k}C\rangle.
\]
Since $\Ter_k(C)=\varnothing$,  $S\notin\Ter_{k}(C)$ and therefore $\dim L_S=2k-1$. Let
\[\h_S=\p(H^0(I_{L_S,\p^{2k+1}}(1)))\]
be the pencil of hyperplanes containing $L_S$. 
We define $\varphi_S \colon C\dashrightarrow \h_S$ by
\[x\mapsto \langle x, L_S\rangle.\]
This construction is considered from a different perspective also in the proof of \cite[Proposition 7]{BC23}. The authors proved that the map $\varphi_S$ is defined on $C\setminus S$ and this allows to extend $\varphi_S$ to a morphism $\phi_S:C\to\h_S$. With the given construction, assume that $C$ is not the rational normal curve. For a generic choice of $k$ points $S=\{x_1,\ldots,x_k\}\subset C$, there exists at least one point $x\in C$ such that $\{x,x_1,\ldots,x_k\}\in \Ter_{k+1}(C)$, which implies $\dim \Ter_{k+1}(C)\geq k$.
\end{proof}

In contrast to \Cref{propos: odd-dimensional ambient space}, \Cref{corol: rational and elliptic normal curves} shows that, when the dimension of the ambient space is even, there exist non-rational curves such that all the Terracini loci are empty. We conclude this section with an example of smooth rational curve with the same property.

\begin{example}\label{example: rational quintic in P4 without Terracini points}
Let $C\subset\p^4$ be the image of the map $f:\p^1\to\p^4$ defined by
$$[x:y]\mapsto [x^5:x^4y:x^3y^2:xy^4:y^5].$$
It is a linear projection of the rational normal quintic in $\p^5$. It is smooth, rational and nondegenerate, and the only relevant value of $r$ is 2. A software computation -- see Section \ref{section: algorithms} -- shows that $\Ter_2(C)=\varnothing$.
\end{example}

\section{Terracini loci of del Pezzo surfaces}\label{section: del Pezzo}
In this section we study Terracini loci of a classical family of varieties. Del Pezzo surfaces are smooth surfaces whose anticanonical divisor is very ample, and have been studied since the nineteenth century. Since our setting requires to fix an embedding, we will study del Pezzo surfaces embedded by the anticanonical linear system. By \cite[Proposition IV.9 and Exercise V.21(2)]{beauville}, we are dealing with a finite list of examples: a del Pezzo surface is either the blow-up of $\p^2$ at at most $6$ points, or the Segre-Veronese surface $\PP^1\times \PP^1$ embedded by $\oo(2,2)$ in $\p^8$. By \cite[Corollary 2.3]{CGG05}, the latter is 3-defective, while the description of its 2-Terracini locus is a special case of \Cref{theorem: first nonempty SV}. For this reason, in this section we will only consider blow-ups
$$X=\Bl_Z\p^2\xrightarrow{\mu}\p^2$$
of the plane, where $Z=\{z_1,\dots,z_t\}\subset\PP^2$ is 
 a set of general points. Recall that the divisor group of $X$ has $t+1$ generators: the pullback $\mu^*(\ell)$ of the class of a line $\ell\subset\p^2$ and the exceptional divisors $E_1,\dots,E_t$. According to \cite[Remark IV.10(1)]{beauville}, the anticanonical linear system on $X$ is the pullback of the system of cubics containing $Z$, so it embeds $X$ in $\p^{9-t}$. We denote by $\LL=3\mu^*(\ell)-E_1-\cdots-E_t$ the very ample linear system embedding $X$. Following numerical condition \eqref{eq: subabundant}, we are interested in $2\le r\le \frac{10-t}{3}$, and so $t\le 4$. 
For $t=1$ we will study $\Ter_2(X)$ and $\Ter_3(X)$, while for $t\in\{2,3,4\}$ we will only consider $\Ter_2(X)$. 

For our convenience, we distinguish two kinds of elements of the Terracini locus of a del Pezzo surface: sets that do not intersect any exceptional divisor, and sets that do. In the first case we can reduce the problem to an exercise about plane cubics. 
\begin{lemma}\label{lemma: speciality Ter2 di delPezzo}
Let $Z=\{z_1,\ldots,z_t\}\subset\p^2$ be a set of $t$ general points and let  $\{q_1,q_2\}\subset\p^2\setminus Z$ be a set of two  points.
\begin{enumerate}
\item If $t\in\{1,2,3\}$, then $h^1(I_{Z+2q_1+2q_2}(3))>0$ if and only if the line $\langle q_1,q_2\rangle$ contains at least a point of $Z$.
\item If $t=4$, then $h^1(I_{Z+2q_1+2q_2}(3))>0$ if and only if either $\langle q_1,q_2\rangle$ contains a point of $Z$ or $Z\cup S$ is contained on a conic.
\end{enumerate}
\begin{proof}
The expected dimension of $H^0(I_{Z+2q_1+2q_2}(3))$ is $4-t$. By B\'ezout's theorem, the line $\langle q_1,q_2\rangle$ is a base component of $I_{Z+2q_1+2q_2}(3)$. If we call $Z'=Z\setminus \langle q_1,q_2\rangle$, then
\[h^0(I_{Z+2q_1+2q_2}(3))=h^0(I_{Z'+q_1+q_2}(2)).\]
If $\langle q_1,q_2\rangle$ contains at least a point of $Z$, then $Z'$ has at most $t-1$ points and therefore $h^0(I_{Z'+q_1+q_2}(2))$ $\ge 5-t$, greater than expected. In a similar way, if $t=4$ and $Z\cup \{q_1,q_2\}$ is contained in a conic, then $h^0(I_{Z'+q_1+q_2}(2))$ is greater than expected. Conversely, assume that $Z=Z'$. If $t\le 3$, then $Z\cup \{q_1,q_2\}$ is a set of at most five linearly general points, so they impose independent conditions to conics and $h^0(I_{Z+2q_1+2q_2}(3))=h^0(I_{Z'+q_1+q_2}(2))=4-t$. If $t=4$ and $Z\cup \{q_1,q_2\}$ is not contained in a conic, then $h^0(I_{Z+2q_1+2q_2}(3))=h^0(I_{Z'+q_1+q_2}(2))=0$, as expected.
\end{proof}
\end{lemma}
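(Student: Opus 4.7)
My plan is to compute $h^0(I_{Z+2q_1+2q_2}(3))$ directly and compare it to its expected value. Since plane cubics form a $10$-dimensional vector space and $t$ simple points together with two double points impose $t + 6$ conditions, the expected value is $4-t$. From the long exact sequence associated to $0 \to I_{Z+2q_1+2q_2}(3) \to \oo_{\p^2}(3) \to \oo_{Z+2q_1+2q_2} \to 0$, positivity of $h^1$ is equivalent to $h^0$ exceeding $4-t$, which for $t = 4$ just means the system is nonempty.

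The key reduction is via B\'ezout's theorem applied to the line $L = \langle q_1, q_2\rangle$. Any cubic singular at both $q_1$ and $q_2$ meets $L$ with multiplicity at least $2+2 = 4$, exceeding the product of degrees $\deg L \cdot 3 = 3$, so $L$ is a component of the cubic. Factoring the cubic as $L \cdot Q$, the residual conic $Q$ must pass through $q_1$ and $q_2$ (to recover the double points) and through every $z_i$ not already on $L$. Writing $Z' = Z \setminus L$, this yields the equality $h^0(I_{Z+2q_1+2q_2}(3)) = h^0(I_{Z'+q_1+q_2}(2))$, reducing everything to a statement about conics through at most $t+2$ points.

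From here I split into cases. If $L$ passes through some $z_i$, then $|Z' \cup \{q_1,q_2\}| \le t+1 \le 5$, so $h^0(I_{Z'+q_1+q_2}(2)) \ge 6-(t+1) = 5-t > 4-t$, giving $h^1 > 0$ in every situation covered by the lemma. If $L$ avoids $Z$, then $Z' = Z$ and the question is whether $Z \cup \{q_1,q_2\}$ imposes independent conditions on conics. For $t \le 3$, no four of these $t+2 \le 5$ points can be collinear: the only possibilities would be three points of $Z$ on a line (excluded by the genericity of $Z$) or $q_1, q_2$ together with two points of $Z$ on a common line (which would force $L \cap Z \ne \varnothing$). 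The classical fact about few points imposing independent conditions on plane conics then gives $h^0 = 4-t$. For $t = 4$, the same collinearity analysis shows the six points impose independent conditions on conics precisely when they do not all lie on a conic; if they do, the conic itself provides a section with $h^0 \ge 1 > 0$, yielding the second alternative in (2). The main technical obstacle is this final verification: combining the genericity of $Z$ with the hypothesis $L \cap Z = \varnothing$ to rule out hidden collinearities, so that the classical characterization of when few points impose independent conditions on the six-dimensional system of conics applies cleanly.
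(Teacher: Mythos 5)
Your proposal is correct and follows essentially the same route as the paper: the Bézout argument forcing $\langle q_1,q_2\rangle$ to be a base component, the reduction to $h^0(I_{Z'+q_1+q_2}(2))$, and the same case split on whether the line meets $Z$. Your explicit enumeration of the possible collinearities (three points of $Z$, or $q_1,q_2$ plus two points of $Z$) just spells out what the paper summarizes as the points being ``linearly general,'' so the two arguments coincide in substance.
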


\begin{lemma}\label{lemma:speciality for ter3 dP1}
Let $z\in\p^2$ and let $Q=\{q_1,q_2,q_3\}\subset\p^2\setminus\{z\}$ be a set of three points. Then $h^0(I_{z+2Q}(3))\neq 0$ if and only if $q_1,q_2,q_3$ are collinear or $z\in \langle q_1,q_2\rangle\cup \langle q_1,q_3\rangle\cup \langle q_2,q_3\rangle$.
\begin{proof}
By Bézout's theorem, the lines $\langle q_1,q_2\rangle$, $\langle q_1,q_3\rangle$ and $\langle q_2,q_3\rangle$ are base components of the linear system $H^0(I_{z+2Q}(3))$. There are two possibilities: either  $q_1,q_2,q_3$ are collinear or not. If $q_1,q_2,q_3$ lie on the same line $\ell$, then
$$H^0(I_{z+2Q}(3))=\{
2\ell+\ell'
 \mid \ell' \mbox{ is a line containing } z
\}\neq 0.$$
If $q_1,q_2,q_3$ are not collinear, then the lines $\langle q_1,q_2\rangle$, $\langle q_1,q_3\rangle$ and $\langle q_2,q_3\rangle$ are distinct, so their union is the only element of $H^0(I_{2Q}(3))$. Then $h^0(I_{z+2Q}(3))\neq 0$ if and only if $z\in\langle q_1,q_2\rangle\cup\langle q_1,q_3\rangle\cup\langle q_2,q_3\rangle$.
\end{proof}\end{lemma}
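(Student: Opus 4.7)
The plan is to use a Bézout-style argument—analogous to the one the author employs in \cref{lemma: speciality Ter2 di delPezzo}—to identify the base components of $H^0(I_{2Q}(3))$, and then split according to whether $q_1,q_2,q_3$ are collinear. The starting observation is that for each pair $i\neq j$, the line $\ell_{ij}=\langle q_i,q_j\rangle$ meets any cubic $F\in H^0(I_{2Q}(3))$ with multiplicity at least $2+2=4$ at $q_i$ and $q_j$ combined. Since this exceeds the Bézout bound $1\cdot 3=3$, each $\ell_{ij}$ must be a component of $F$.

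In the non-collinear case, the three lines $\ell_{12},\ell_{13},\ell_{23}$ are pairwise distinct, so any $F\in H^0(I_{2Q}(3))$ is divisible by their product; by degree, $F$ is a scalar multiple of $\ell_{12}\ell_{13}\ell_{23}$, which is already a cubic singular at each $q_i$. Hence $h^0(I_{2Q}(3))=1$, and $h^0(I_{z+2Q}(3))\neq 0$ if and only if $z$ lies on this triangle, that is, $z\in\ell_{12}\cup\ell_{13}\cup\ell_{23}$. This gives both implications of the lemma in this case.

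For the collinear case, say $q_1,q_2,q_3$ lie on a line $\ell$. The previous argument only forces $\ell$ to be a component, so I would write $F=\ell\cdot C_2$ for some conic $C_2$ and verify by a short local computation that $F$ is singular at a point $q\in\ell$ if and only if $C_2(q)=0$. Applying Bézout once more, $C_2$ vanishing at the three collinear points $q_1,q_2,q_3$ forces $\ell \mid C_2$, so $F=\ell^2\cdot m$ for some line $m$. Conversely, a nonzero element of $H^0(I_{z+2Q}(3))$ is produced by choosing $m$ to be any line through $z$ (any line at all if $z\in\ell$), showing $h^0(I_{z+2Q}(3))\neq 0$ whenever $q_1,q_2,q_3$ are collinear, as required.

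I expect the only subtle step to be the local verification that $F=\ell\cdot C_2$ is singular at $q\in\ell$ exactly when $C_2(q)=0$, which is a one-line application of the product rule but worth stating carefully, since it is what justifies the Bézout reduction $F=\ell^2\cdot m$. Everything else is a direct unwinding of Bézout's theorem in $\p^2$, in the same spirit as the proof of \cref{lemma: speciality Ter2 di delPezzo}.
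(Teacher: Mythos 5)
Your proposal is correct and follows essentially the same route as the paper: Bézout's theorem forces the lines $\langle q_i,q_j\rangle$ to be base components, and the two cases (collinear or not) are handled exactly as in the paper's proof. The only difference is that you spell out the reduction $F=\ell^2\cdot m$ in the collinear case, which the paper asserts directly; this extra detail is welcome but does not change the argument.
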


Lemmas \ref{lemma: speciality Ter2 di delPezzo} and \ref{lemma:speciality for ter3 dP1} characterize elements $S$ of the 2-Terracini locus and the 3-Terracini locus such that no point of $S$ belongs to any exceptional divisor. Now we study the other case, when at least one point of $S$ lies on one of the exceptional divisors. If $C\subset\p^2$ is a plane curve, we indicate by $\tilde{C}=\overline{\mu^{-1}(C)\setminus (E_1\cup\cdots\cup E_t)}$ its strict transform on $X$.

\begin{remark}\label{remark: cubics singular contains E}
Let $z\in\p^2$ and let $E$ be the exceptional divisor of $\Bl_z\p^2\xrightarrow{\mu}\p^2$. Let $e\in E$. Let $Y\in 
H^0(I_{2e}\ot(\mu^*(3\ell)-E))$. Call $C=\mu_*(Y)$. Then $C$ is a plane cubic and $\tilde{C}\sim \mu^*(3\ell)-mE$, where $m$ is the multiplicity of $C$ at $z$. 
Hence $Y=(m-1)E\cup \tilde{C}$. Since we are assuming that $Y\in 
H^0(\mu^*(3\ell)-E)$, we deduce that $m\ge 2$ and therefore $C$ is singular at $z$. For the general element of this linear system, $m=2$ and $C$ is a nodal cubic passing twice through $z$, and at least one of the tangent directions is the one corresponding to $e$.
\end{remark}

In general, if $p_j\in E_i$ for some $i\in\{1,\dots,t\}$, then we indicate by $L_j\subset\p^2$ the line through $z_i$ corresponding to $p_j$. In the following example we describe four different configurations of points belonging to the $2$-Terracini locus of $X$ with the property that at least one of the points belongs to an exceptional divisor. 
\begin{example}\label{example: S touches E and S in Ter} Let $S=\{p_1,p_2\}\in\Ter_2(X)$ such that $S\cap(E_1\cup\cdots\cup E_t)\neq\varnothing$. Recall that $H^0(I_{2S}\ot \LL)$ has expected dimension $4-t$. 

\begin{enumerate}
\item\label{item: S subset E} Since the exceptional divisors are lines by \cite[Proposition IV.12]{beauville}, \Cref{propos: Ter of a subvariety} implies that $E_i^{(2)}\subseteq\Ter_2(X)$ for every $i\in\{1,\dots,t\}$.
\item\label{item: p1 in E1, p2 in E2 and Lp1 contains two poins of Z} Assume $t\ge 2$. Suppose that $p_1\in E_1$ and $p_2\in E_2$.  
If $z_2\in L_1$, then $E_1+E_2+\tilde{L_1}+\tilde{B}$ is an element of $H^0(I_{2S}\ot \LL)$ whenever $B$ is a conic tangent to $L_2$ at $z_2$ and containing $Z\setminus\{z_2\}$. 
The dimension of the linear system of such conics is at least $6-2-(t-1)>4-t$, so $S\in\Ter_2(X)$. 
\item\label{item: p1 in E1, p2 notin E2 p2 in L1} Suppose that $p_1\in E_1$ and $p_2\notin E_1\cup\cdots\cup E_t$.
If $\mu(p_2)\in L_1$, then $E_1+\tilde{L_1}+\tilde{B}$ is an element of $H^0(I_{2S}\ot \LL)$ whenever $B$ is a conic containing $Z\cup\{\mu(p_2)\}$. The dimension of the linear system of such conics is at least $6-1-t>4-t$, so $S\in\Ter_2(X)$.

\item\label{item: p1 in E1, p2 notin E2 p2 in z1z2} Assume $t\ge 2$. Suppose that $p_1\in E_1$ and $p_2\notin E_1\cup\cdots\cup E_t$ and $\mu(p_2)\in\langle z_1,z_2\rangle$.  If we call $L=\langle z_1,z_2\rangle$, then $E_1+\tilde{L}+\tilde{B}$ is an element of $H^0(I_{2S}\ot \LL)$ whenever $B$ is a conic tangent to $L_1$ at $z_1$, passing through $\mu(p_2)$ and containing $Z\setminus \{z_1,z_2\}$.  The dimension of the linear system of such conics is at least $6-2-1-(t-2)>4-t$, so $S\in\Ter_2(X)$.
\end{enumerate}
\end{example}

Let us show that in \Cref{example: S touches E and S in Ter} we actually listed all instances of elements $S\in\Ter_2(X)$ such that at least a point of $S$ is on an exceptional divisor.

\begin{lemma}\label{lemma: r=2 S touches E}
Let $t\in\{1,\dots,4\}$. Let $Z=\{z_1,\dots,z_t\}\in\p^2$ be a set of general points and let $X=\Bl_Z\p^2\xrightarrow{\mu}\p^2$ be the del Pezzo surface in $\p^{9-t}$. Let $S=\{p_1,p_2\}\subset X$ be a set of two points such that $S\cap (E_1\cup\cdots\cup E_t)\neq \varnothing$. Then $S\in\Ter_2(X)$ if and only if $S$ is one of the sets described by \Cref{example: S touches E and S in Ter}, up to a relabeling of $Z$.
\begin{proof}
Without loss of generality we assume that $p_1\in E_1$. First we suppose that $S\subset E_1\cup\cdots\cup E_t$. If $S\subset E_i$ for some $i\in\{1,\dots,t\}$, we are in the situation of \Cref{example: S touches E and S in Ter}\eqref{item: S subset E} and there is nothing to prove. Assume then that $t\ge 2$ and $p_2\in E_2$. Suppose that we are not in the situation of \Cref{example: S touches E and S in Ter}\eqref{item: p1 in E1, p2 in E2 and Lp1 contains two poins of Z}, so $z_2\notin L_1$ and $z_1\notin L_2$. According to \Cref{remark: cubics singular contains E}, the general element of $H^0(I_{2S}\ot\LL)$ is of the form $E_1+E_2+\tilde{C}$, where $C$ is a plane cubic singular at $z_1$ and $z_2$. If we call $L=\langle z_1,z_2\rangle$, then our assumption is that $L_1\neq L\neq L_2$ and $C=L+B$, where $B$ is a conic tangent to $L_1$ at $z_1$, tangent to $L_2$ at $z_2$ and containing $Z\setminus\{z_1,z_2\}$. Under our assumption these $4+t-2$ linear conditions are independent, so the dimension of the linear system of such conics is $6-(4+t-2)=4-t$ and  $S\notin\Ter_2(X)$.

Now we have exhausted the case $S\subset E_1\cup\cdots\cup E_t$, so we assume that $p_2\notin E_1\cup\cdots\cup E_t$. Suppose that we are not in the situation of \Cref{example: S touches E and S in Ter}\eqref{item: p1 in E1, p2 notin E2 p2 in L1} nor of \Cref{example: S touches E and S in Ter}\eqref{item: p1 in E1, p2 notin E2 p2 in z1z2}, so $\mu(p_2)\notin L_1$ and $\mu(p_2)\notin\langle z_1,z_i\rangle$ for any $i\in\{1,\dots, t\}$. Again by \Cref{remark: cubics singular contains E}, the general element of $H^0(I_{2S}\ot\LL)$ is of the form $E_1+\tilde{C}$, where $C$ is a plane cubic singular at $z_1$ and $\mu(p_2)$. If we call $L=\langle z_1,\mu(p_2)\rangle$, then $C=L+B$, where $B$ is a conic tangent to $L_1$ at $z_1$, passing through $\mu(p_2)$ and containing $Z\setminus\{z_1\}$. Under our assumption these $2+1+t-1$ linear conditions are independent, so the dimension of the linear system of such conics is $6-2-t=4-t$ and  $S\notin\Ter_2(X)$. 
\end{proof}\end{lemma}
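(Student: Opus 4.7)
The ``if'' direction is already contained in \Cref{example: S touches E and S in Ter}, where each of the four configurations was shown to cut out a linear system of dimension strictly greater than $4-t$. I would therefore focus on the converse: assuming $S$ matches none of the configurations, I want to show that $h^0(I_{2S}\otimes\LL)$ equals its expected value $4-t$, so $S\notin\Ter_2(X)$. Without loss of generality $p_1\in E_1$, and I would split into the cases $p_2\in E_1\cup\cdots\cup E_t$ and $p_2\notin E_1\cup\cdots\cup E_t$, mirroring the three sub-configurations of \Cref{example: S touches E and S in Ter} that come into play.

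In the first case, if $p_2\in E_1$ then $S\in E_1^{(2)}$ and we are in \Cref{example: S touches E and S in Ter}\eqref{item: S subset E}, so after relabeling I may take $p_2\in E_2$. Applying \Cref{remark: cubics singular contains E} at both points, the general section of $H^0(I_{2S}\otimes\LL)$ has the form $E_1+E_2+\tilde{C}$ with $C$ a plane cubic having nodes at $z_1$ and $z_2$ whose tangent directions are $L_1$ and $L_2$ respectively. By B\'ezout the line $L=\langle z_1,z_2\rangle$ is a component of $C$, so $C=L+B$ with $B$ a conic. Excluding \Cref{example: S touches E and S in Ter}\eqref{item: p1 in E1, p2 in E2 and Lp1 contains two poins of Z} gives $L\neq L_1,L_2$, which forces $B$ to be tangent to $L_1$ at $z_1$, tangent to $L_2$ at $z_2$, and to pass through $Z\setminus\{z_1,z_2\}$. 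These $2+2+(t-2)$ conditions are independent for generic $Z$, so the linear system has dimension $6-(t+2)=4-t$.

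In the second case I exclude \Cref{example: S touches E and S in Ter}\eqref{item: p1 in E1, p2 notin E2 p2 in L1} and \Cref{example: S touches E and S in Ter}\eqref{item: p1 in E1, p2 notin E2 p2 in z1z2}, so that $\mu(p_2)\notin L_1$ and $\mu(p_2)\notin\langle z_1,z_i\rangle$ for every $i\in\{2,\dots,t\}$. Applying \Cref{remark: cubics singular contains E} only at $p_1$, sections take the form $E_1+\tilde{C}$ with $C$ a plane cubic singular at $z_1$ in direction $L_1$ and singular at $\mu(p_2)$. B\'ezout again forces $L=\langle z_1,\mu(p_2)\rangle$ to be a component, and since $L\neq L_1$ I can write $C=L+B$ with $B$ a conic tangent to $L_1$ at $z_1$, passing through $\mu(p_2)$, and containing $Z\setminus\{z_1\}$. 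This gives $2+1+(t-1)=t+2$ conditions and dimension $4-t$.

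The principal obstacle is justifying independence of the listed linear conditions on conics: a priori, any collision could inflate the dimension above $4-t$ and place $S$ wrongly into $\Ter_2(X)$. The role of the excluded configurations is precisely to prevent the conic $B$ from being forced to contain an extra line (one of the $L_i$, or the line through $z_1,z_i,\mu(p_2)$), which would make two conditions collapse into one. Once these degenerations are ruled out, the general-position assumption on $Z$ ensures that the tangency, incidence and interpolation conditions are in linearly general position on $H^0(\oo_{\p^2}(2))$, and the dimension count closes the argument.
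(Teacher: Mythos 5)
Your proposal is correct and follows essentially the same route as the paper: the same case split on whether $p_2$ lies on an exceptional divisor, the same use of \Cref{remark: cubics singular contains E} to reduce to plane cubics, the same B\'ezout argument splitting off the line through the two singular points, and the same count of $t+2$ independent conditions on conics giving dimension $4-t$. The only difference is cosmetic: you make slightly more explicit why excluding the listed configurations forces the residual conic $B$ to carry the tangency conditions rather than degenerate.
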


Thanks to \Cref{lemma: speciality Ter2 di delPezzo} and \Cref{lemma: r=2 S touches E}, now we know exactly what sets belong to $\Ter_2(X)$. In order to determine the irreducible components of $\Ter_2(X)$, we want to understand which of these sets are specializations of others. 
\begin{definition}\label{def:  Y W V}
Let $t\in\{1,\dots,4\}$. Let $Z=\{z_1,\dots,z_t\}\in\p^2$ be a set of general points and let $X=\Bl_Z\p^2\xrightarrow{\mu}\p^2$ be the del Pezzo surface in $\p^{9-t}$. Recall that, if $p_i\in E_k$, we call $L_i\subset\p^2$ the line through $z_k$ corresponding to $p_i$. Motivated by \Cref{lemma: speciality Ter2 di delPezzo} and \Cref{example: S touches E and S in Ter}\eqref{item: p1 in E1, p2 notin E2 p2 in L1}, for every $i\in\{1,\dots,t\}$ we define
\begin{align*}
Y_i&=\{\{p_1,p_2\}\in X^{(2)}\mid p_1,p_2\notin E_1\cup\cdots\cup E_t\mbox{ and } z_i\in\langle \mu(p_1),\mu(p_2)\rangle\},\\
U&=\{
\{p_1,p_2\}\in X^{(2)}\mid p_1,p_2\notin E_1\cup\cdots\cup E_{t}
\mbox{ and }
h^0(I_{Z\cup\{\mu(p_1),\mu(p_2)\}}(2))>0\}\mbox{ and}\\
V_i&=\{\{p_1,p_2\}\in X^{(2)}\mid p_1\in E_i\mbox{, }p_2\notin E_1\cup\cdots\cup E_t\mbox{ and } \mu(p_2)\in L_1\} .
\end{align*}
In a similar way, motivated by \Cref{example: S touches E and S in Ter}\eqref{item: p1 in E1, p2 in E2 and Lp1 contains two poins of Z} and \Cref{example: S touches E and S in Ter}\eqref{item: p1 in E1, p2 notin E2 p2 in z1z2}, if $t\geq 2$ and $i,j\in\{1,\ldots,t\}$ are distinct indices, then we define
\begin{align*}
W_{ij}&=\{\{p_1,p_2\}\in X^{(2)}\mid p_1\in E_i\mbox{, }p_2\in E_j\mbox{ and } z_j\in L_1\} \mbox{ and}\\
T_{ij} &=\{\{p_1,p_2\}\in X^{(2)}\mid p_1\in E_i\mbox{, }p_2\notin E_1\cup\cdots\cup E_t\mbox{ and } \mu(p_2)\in\langle z_i,z_j\rangle\}.
\end{align*}

Notice that each of these sets is irreducible. Since $Y_i$ is isomorphic to a dense open subset of $\p^2\times \p^1$, it has dimension 3. In a similar way we observe that $\dim(U)=3$.
\end{definition}

\begin{lemma}\label{lemma: Vi is limit of Yi} Consider the notation of \Cref{def:  Y W V} and let $i\in\{1,\dots,t\}$. Then
\begin{enumerate}
\item $V_i\subseteq \overline{Y_i}$.
\item If $t=4$, then $U\not\subseteq \overline{Y_i}$ and $Y_i\not\subseteq \overline{U}$.
\item $E_i^{(2)}\not\subseteq \overline{Y_i}$. Moreover, if $t=4$ then $E_i^{(2)}\not\subseteq\overline{U}$.
\end{enumerate}
Furthermore, suppose $t\geq 2$ and $i,j\in\{1,\ldots,t\}$ are distinct.  Then
\begin{enumerate}[resume]
\item $E_i^{(2)}\subseteq \overline{Y_j}$,
\item

$W_{ij}\subseteq\overline{Y_i}$,
\item $T_{ij}\subseteq\overline{Y_j}$.
\end{enumerate}
\begin{proof}
\begin{enumerate}
\item Let $\{p_1,p_2\}\in V_i$. Now let $q$ be a general point of $\tilde{L_1}$, so that $\mu(q)\in L_1=\langle z_i,\mu(p_2)\rangle$. By construction $\langle\mu(q),\mu(p_2)\rangle\ni z_i$, so $\{q,p_2\}\in Y_i$. Now let $q$ move along $\tilde{L_1}$ and specialize $q$ to $\tilde{L_1}\cap E_i$. In this way $q$ specializes to $p_1$, so $\{p_1,p_2\}$ is a limit of elements of $Y_i$.
\item Since $\dim(U)=\dim(Y_i)$, if one of them was contained in the closure of other one they would be the same.
\item 
Let $\{q_1,q_2\}\in Y_i$ and let $L=\langle\mu(q_1),\mu(q_2)\rangle$. The only way to specialize $q_1$ onto $E_i$ is to specialize it to a point of $E_i\cap \tilde{L}$. But $\tilde{L}$ intersects $E_i$ at one point.   This means that it is impossible to specialize $\{q_1,q_2\}$ to two distinct points of $E_i$ while staying in $\overline{Y_i}$. This means that $E_i^{(2)}\not\subseteq \overline{Y_i}$.  If $t=4$, then something similar happens with $U$. If we start from an element $\{q_1,q_2\}\in U$ and we call $C$ the conic containing $Z\cup\{q_1,q_2\}$, then every specialization of $q_1$ onto $E_i$ will be a point of $E_i\cap \tilde{C}$. Since $C$ is smooth at $z_i$, that intersection consists of only one point, so it is not possible to specialize $\{q_1,q_2\}$ to two distinct points of $E_i$.
\item Let $\{p_1,p_2\}\in E_i^{(2)}$. Let $L$ be a general line through $z_j$ and let $q_1,q_2\in X\setminus(E_1\cup\cdots\cup E_t)$ such that $\mu(q_1)= L\cap L_1$ and $\mu(q_2)=L\cap L_2$. Let $L$ move along the pencil of lines through $z_j$ and specialize $L$ to $\langle z_i,z_j\rangle$. In this way $\mu(q_1)$ approaches $z_i$ along $L_1$ and $\mu(q_2)$ approaches $z_i$ along $L_2$. Hence $q_1$ specializes to $p_1$ and $q_2$ specializes to $p_2$, so $\{p_1,p_2\}$ is a limit of elements of $Y_j$. 
\item Let $\{p_1,p_2\}$ be a general point of $W_{ij}$. Then $p_1\in E_i$, $p_2\in E_j$ and $L_1=\langle z_i,z_j\rangle$. Since it is general we have $L_1\neq L_2$. Let $C\subset\PP^2$ be a smooth conic that is tangent to $L_1$ at $z_i$ and choose a general point $q_1\in\tilde C\setminus(E_1\cup\cdots\cup E_t\cup\tilde{L_2})$. Now let $L=\langle\mu(q_1),z_i\rangle\subset\PP^2$ and $q_2=\mu^{-1}(L\cap L_2)$, and thus $\{q_1,q_2\}\in Y_i$.  Now move $q_1$ along $\tilde C$ toward $p_1$.  Since $L$ approaches $L_1$, $q_2$ moves along $\tilde{L_2}$ toward $p_2$, and so $\{p_1,p_2\}$ is a limit of elements in $Y_i$. 
\item Let $\{p_1,p_2\}\in T_{ij}$, so $z_i\in L_1$ and $\mu(p_2)\in\langle z_i,z_j\rangle$. Choose a general point $q_1\in\tilde{L_1}\setminus E_i$, let $L=\langle\mu(q_1),z_j\rangle$, and pick a general line $L'\subset\PP^2$ through $\mu(p_2)$.  Letting $q_2=\mu^{-1}(L\cap L')$, we have $\{q_1,q_2\}\in Y_j$.  Now move $q_1$ along $\tilde{L_1}$ so that it approaches $p_1$.  Then $L$ will approach $\langle z_i,z_j\rangle$, but since $\mu(p_2)\in\langle z_i,z_j\rangle$, we see that $q_2$ will approach $p_2$ along $\tilde{L'}$, and so $\{p_1,p_2\}$ is a limit of elements in $Y_j$.\qedhere
\end{enumerate}
\end{proof}
\end{lemma}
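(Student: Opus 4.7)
The plan is to treat the six claims separately, proving the four inclusions by exhibiting explicit one-parameter degenerations and the non-inclusions by geometric or dimensional obstructions.

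For the inclusions, I would construct in each case a family whose general member lies in the target locus and whose limit is the prescribed point. For $V_i\subseteq\overline{Y_i}$, take $\{p_1,p_2\}\in V_i$ with $p_1\in E_i$ and $\mu(p_2)\in L_1$, and let a point $q$ vary along $\tilde{L_1}$; since $\mu(q)$, $\mu(p_2)$, $z_i$ are collinear on $L_1$, the pair $\{q,p_2\}$ sits in $Y_i$, and sending $q\to \tilde{L_1}\cap E_i=p_1$ produces the limit. For $E_i^{(2)}\subseteq\overline{Y_j}$, I would use the pencil of lines through $z_j$: if $p_1,p_2\in E_i$ correspond to lines $L_1,L_2$ through $z_i$, then for a variable line $L\ni z_j$ the pair $\mu^{-1}(L\cap L_1),\mu^{-1}(L\cap L_2)$ lies in $Y_j$, and specializing $L$ to $\langle z_i,z_j\rangle$ forces both points onto $E_i$ at the directions $L_1,L_2$, i.e.\ onto $p_1,p_2$. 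The arguments for $W_{ij}\subseteq\overline{Y_i}$ and $T_{ij}\subseteq\overline{Y_j}$ are variants: for $W_{ij}$ I would pick a smooth conic $C$ tangent to $L_1$ at $z_i$, move $q_1\in\tilde{C}$, and set $q_2=\mu^{-1}(\langle\mu(q_1),z_i\rangle\cap L_2)$; for $T_{ij}$ I would move $q_1$ along $\tilde{L_1}$ and intersect $\langle\mu(q_1),z_j\rangle$ with a generic line through $\mu(p_2)$.

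For the non-inclusions, the strategy is different. The statement $U\not\subseteq\overline{Y_i}$ and $Y_i\not\subseteq\overline{U}$ (when $t=4$) follows from the dimension count recorded in \Cref{def:  Y W V}: both loci are irreducible of dimension $3$, so any containment between their closures would collapse them, and an explicit separating element rules this out. For $E_i^{(2)}\not\subseteq\overline{Y_i}$, the obstruction is that for any $\{q_1,q_2\}\in Y_i$ the images $\mu(q_1),\mu(q_2)$ lie on a single line $L\ni z_i$, and $\tilde{L}$ meets $E_i$ in a unique point, so the two points cannot degenerate to two distinct points of $E_i$. The corresponding statement $E_i^{(2)}\not\subseteq\overline{U}$ when $t=4$ is the same argument with $L$ replaced by the unique conic $C$ through $Z\cup\{\mu(q_1),\mu(q_2)\}$; generality of $Z$ ensures $C$ is smooth at $z_i$, so $\tilde{C}\cap E_i$ is again a single point.

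The main technical obstacle is the geometric bookkeeping in the degeneration arguments: at each step I must verify not only that the moving pair genuinely lies in the ambient locus (distinct points, off the exceptional divisors, images satisfying the defining condition) but also that the tangent direction of the curve along which we degenerate picks out the correct point on the exceptional divisor in the limit. This is particularly delicate in $W_{ij}\subseteq\overline{Y_i}$, where the tangency of $C$ to $L_1$ at $z_i$ is essential to force $\langle\mu(q_1),z_i\rangle\to L_1$ and hence $q_2\to p_2\in E_j$ with the right tangent direction; analogous care is needed for $T_{ij}$ and for $E_i^{(2)}\subseteq\overline{Y_j}$ to ensure the limit really is the prescribed point rather than some other element of the exceptional fiber.
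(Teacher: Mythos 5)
Your proposal is correct and follows essentially the same route as the paper's proof in every part: the same one-parameter degenerations (point moving along $\tilde{L_1}$ for $V_i$, the pencil of lines through $z_j$ for $E_i^{(2)}\subseteq\overline{Y_j}$, the tangent conic for $W_{ij}$, and the auxiliary line $L'$ for $T_{ij}$), the same dimension-count for $U$ versus $Y_i$, and the same single-intersection-point obstruction for the non-inclusions of $E_i^{(2)}$. The points you flag as delicate (tangency forcing $\langle\mu(q_1),z_i\rangle\to L_1$, smoothness of the conic at $z_i$) are exactly the ones the paper relies on.
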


Summarizing our results, we can describe the $2$-Terracini loci of del Pezzo surfaces.
\begin{proposition}\label{coroll: Ter2 per delpezzos}
Let $t\in\{1,\dots,4\}$. Let $Z=\{z_1,\dots,z_t\}\in\p^2$ be a set of general points and let $X=\Bl_Z\p^2\xrightarrow{\mu}\p^2$ be the del Pezzo surface in $\p^{9-t}$.
\begin{enumerate}
    \item If $t=1$, then $\Ter_2(X)=\overline{Y_1}\cup E_1^{(2)}$ has a component of dimension 3 and a component of dimension 2. 
\item If $t\in\{2,3\}$, then
$\Ter_2(X)=\overline{Y_1}\cup\cdots\cup \overline{Y_t}$
has $t$ components of dimension 3.

\item If $t=4$, then
$\Ter_2(X)=
\overline{Y_1}\cup\cdots\cup \overline{Y_4}\cup\overline U$
has five components of dimension 3.

\end{enumerate}\end{proposition}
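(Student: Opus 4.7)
The plan is to combine the pointwise characterizations of \cref{lemma: speciality Ter2 di delPezzo} and \cref{lemma: r=2 S touches E}, which together enumerate exactly which pairs $\{p_1,p_2\}$ belong to $\Ter_2(X)$, with the specialization containments from \cref{lemma: Vi is limit of Yi}, which collapse most of these loci into closures of $Y_i$ or $U$. I would treat the four values of $t$ in parallel, separating each $S\in\Ter_2(X)$ according to whether $S\cap(E_1\cup\cdots\cup E_t)=\varnothing$ or not.

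First I would handle the pairs $S$ disjoint from the exceptional divisors: by \cref{lemma: speciality Ter2 di delPezzo}, such an $S$ lies in $\Ter_2(X)$ if and only if $S\in Y_i$ for some $i$, or (when $t=4$) $S\in U$. For the pairs meeting some exceptional divisor, \cref{lemma: r=2 S touches E} together with \cref{example: S touches E and S in Ter} shows that $S$ belongs to one of $E_i^{(2)}$, $V_i$, and (when $t\ge 2$) $W_{ij}$ or $T_{ij}$ for some indices. At this stage we have $\Ter_2(X)$ as a finite union of these explicit loci.

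Second I would apply \cref{lemma: Vi is limit of Yi} to absorb everything into the claimed components: $V_i\subseteq\overline{Y_i}$ by part (1); when $t\ge 2$ we also have $W_{ij}\subseteq\overline{Y_i}$ by (5), $T_{ij}\subseteq\overline{Y_j}$ by (6), and $E_i^{(2)}\subseteq\overline{Y_j}$ for any $j\ne i$ by (4). For $t=1$ there is no such $j$, so $E_1^{(2)}$ survives as a separate piece; for $t\in\{2,3\}$ every exceptional-divisor contribution is absorbed into some $\overline{Y_i}$; for $t=4$ we additionally retain $\overline{U}$. This produces the set-theoretic decompositions in the statement.

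Finally I would verify irreducibility, dimensions, and distinctness. Each $Y_i$ and (for $t=4$) $U$ is irreducible of dimension $3$ from the parametrizations recorded in \cref{def:  Y W V}, while $E_1^{(2)}$ is irreducible of dimension $2$. Distinctness of the $\overline{Y_i}$ is immediate from the definition: a general element of $Y_i$ has its chord through $z_i$ and avoids every other $z_j$, so no $\overline{Y_i}$ is contained in another. For $t=4$, part (2) of \cref{lemma: Vi is limit of Yi} separates $\overline{U}$ from every $\overline{Y_i}$, and for $t=1$ part (3) separates $E_1^{(2)}$ from $\overline{Y_1}$. The main obstacle I anticipate is bookkeeping, namely being confident that \cref{lemma: r=2 S touches E} and \cref{example: S touches E and S in Ter} really exhaust every configuration where $S$ meets an exceptional divisor, up to relabelling of $Z$, and that the symmetry among indices has been handled correctly when two distinct exceptional divisors are involved.
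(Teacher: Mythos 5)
Your proposal is correct and follows essentially the same route as the paper, which presents this proposition precisely as a summary of \cref{lemma: speciality Ter2 di delPezzo}, \cref{lemma: r=2 S touches E}, \cref{example: S touches E and S in Ter} and the specialization/non-containment statements of \cref{lemma: Vi is limit of Yi}. Your bookkeeping of which loci are absorbed into which $\overline{Y_i}$ (and why $E_1^{(2)}$ survives only for $t=1$ and $\overline{U}$ only for $t=4$) matches the intended argument.
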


We conclude this section by considering the remaining case $t=1$ and $r=3$. In this situation we expect $H^0(I_{2S}\ot (\mu^*(3\ell)-E))=0$.

\begin{lemma}\label{lemma: r=3 S touches E is in ter} Let $z\in\p^2$ and let $X=\Bl_z\p^2\xrightarrow{\mu}\p^2$ be the del Pezzo surface in $\p^8$. Let $E$ be the exceptional divisor. Let $S=\{p_1,p_2,p_3\}\subset X$ be a set of three points such that $S\cap E\neq \varnothing$. Up to permutation of the indices, $S\in\Ter_3(X)$ if and only if either
\begin{enumerate}
    \item $p_1,p_2\in E$, or
\item $p_1\in E$, $p_2,p_3\notin E$ and $L_1\cap \{\mu(p_2),\mu(p_3)\}\neq\varnothing$.
\end{enumerate}
\begin{proof}
Let $S=\{p_1,p_2,p_3\}$. If $E$ contains at least two points of $S$, say $p_1$ and $p_2$, then $\{p_1,p_2\}\in \Ter_2(X)$ by \Cref{example: S touches E and S in Ter}\eqref{item: S subset E}, so a fortiori $S\in\Ter_3(X)$. Now assume that $p_1\in E$ and $p_2,p_3\notin E$. By \Cref{remark: cubics singular contains E}, the general element in $H^0(I_{2S}\ot (\mu^*(3\ell)-E))$ is of the form $E+\tilde{C}$, where $C$ is a plane cubic singular at $\mu(p_2)$, singular at $\mu(p_3)$ and passing twice through $z$, with at least one of the tangent directions corresponding to $p_1$. The lines $\langle\mu(p_2),\mu(p_3)\rangle$, $\langle z,\mu(p_2)\rangle$ and $\langle z,\mu(p_3)\rangle$ are irreducible components of $C$. The only possibility for $S$ to be an element of $\Ter_3(X)$ or, equivalently, for $h^0(I_{2S}\ot (\mu^*(3\ell)-E))$ to be greater than 0, is that one of the three lines is actually $L_1$.
\end{proof}
\end{lemma}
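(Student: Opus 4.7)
The plan is to split on the size of $S\cap E$ and reduce the second case to a question about plane cubics. If at least two points of $S$ lie on $E$, say $p_1,p_2\in E$, then \Cref{example: S touches E and S in Ter}(\ref{item: S subset E}) gives $\{p_1,p_2\}\in E^{(2)}\subseteq\Ter_2(X)$, so $h^0(I_{2\{p_1,p_2\}}\otimes \LL)$ strictly exceeds its expected value of $3$. Since imposing the scheme $2p_3$ drops $h^0$ by at most $\dim X+1=3$, we still get $h^0(I_{2S}\otimes \LL)>0$, placing $S\in\Ter_3(X)$ regardless of the location of $p_3$. This disposes of case (1).

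Assume next that $p_1\in E$ and $p_2,p_3\notin E$. I would use \Cref{remark: cubics singular contains E} applied at $p_1$ to translate the cohomological condition into a statement about plane cubics: any element $Y\in H^0(I_{2S}\otimes \LL)$ is of the form $Y=E+\tilde{C}$, where $C=\mu_*(Y)$ is a plane cubic with $\mathrm{mult}_z(C)\geq 2$, whose tangent cone at $z$ contains $L_1$, and with $\mathrm{mult}_{\mu(p_j)}(C)\geq 2$ for $j=2,3$. Thus $S\in\Ter_3(X)$ becomes equivalent to the existence of such a cubic $C$.

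The next step is a triple application of B\'ezout's theorem. Each of the lines $\langle z,\mu(p_2)\rangle$, $\langle z,\mu(p_3)\rangle$, and $\langle \mu(p_2),\mu(p_3)\rangle$ meets $C$ with intersection multiplicity at least $2+2=4>\deg C$, hence each is forced to be an irreducible component of $C$. In the non-collinear configuration these three lines are distinct, so $\deg C=3$ pins $C$ down as their union. The two tangent directions at $z$ are then $\langle z,\mu(p_2)\rangle$ and $\langle z,\mu(p_3)\rangle$, and $L_1$ coincides with one of them precisely when $\mu(p_j)\in L_1$ for some $j\in\{2,3\}$. This gives the forward implication. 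For the converse, assuming $L_1\cap\{\mu(p_2),\mu(p_3)\}\neq\varnothing$, the explicit reducible cubic $\langle z,\mu(p_2)\rangle\cup\langle z,\mu(p_3)\rangle\cup\langle \mu(p_2),\mu(p_3)\rangle$ supplies a nonzero element of $H^0(I_{2S}\otimes\LL)$.

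The main obstacle is ensuring the B\'ezout step pins down $C$ correctly; the degenerate configuration in which $z,\mu(p_2),\mu(p_3)$ are collinear causes the three lines to collapse to one, so that one only concludes $C=2L+L''$ for some additional line $L''$. That specialization has to be examined separately to verify that no further cubics give rise to unexpected elements of $H^0(I_{2S}\otimes\LL)$ beyond those covered by (2).
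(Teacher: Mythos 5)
Your argument follows the paper's proof step for step: the same reduction of case (1) to \Cref{example: S touches E and S in Ter}, the same use of \Cref{remark: cubics singular contains E} to pass to plane cubics, and the same B\'ezout argument forcing the three lines into $C$. The one substantive difference is that you explicitly flag the degenerate configuration $z\in\langle\mu(p_2),\mu(p_3)\rangle$ as unfinished, whereas the paper's proof passes over it in silence. That honesty exposes a real gap, and it is not one you can wave away: work the case out and the conclusion changes. If $z,\mu(p_2),\mu(p_3)$ lie on a common line $L$, then B\'ezout forces $C=2L+L''$ for some line $L''$, and taking $L''$ through $z$ gives a cubic of multiplicity $3$ at $z$; the corresponding divisor $2E+2\tilde{L}+\tilde{L''}\sim 3\mu^*(\ell)-E$ contains $2E$, hence is singular at $p_1$ no matter where $p_1$ sits on $E$, and it is singular at $p_2,p_3$ because they lie on the doubled line $2\tilde{L}$. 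So $h^0(I_{2S}\ot\LL)\ge 2>0$ and $S\in\Ter_3(X)$ even when $L_1\cap\{\mu(p_2),\mu(p_3)\}=\varnothing$, i.e.\ when neither (1) nor (2) holds.

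Concretely: neither your write-up nor the paper's establishes the ``only if'' direction as stated, because the tangent-cone argument (``$L_1$ must be one of the three lines'') presupposes that the multiplicity of $C$ at $z$ is exactly $2$, and this fails precisely in the collinear configuration, where every section comes from a cubic of multiplicity $3$ at $z$ and the condition at $p_1$ becomes automatic. The missed sets happen to lie in $\overline{B_{23}}$ (let $p_1$ degenerate onto $E$ from a general element of $B_{23}$), so \Cref{cor: ter3 of del pezzo} survives, but the lemma itself needs either a third clause covering $z\in\langle\mu(p_2),\mu(p_3)\rangle$ or a hypothesis excluding that configuration. To complete your proof you must carry out the analysis you deferred; when you do, record the extra case rather than trying to force it into (2).
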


Thanks to \Cref{lemma:speciality for ter3 dP1} and \Cref{lemma: r=3 S touches E is in ter} we know exactly what sets belong to $\Ter_3(X)$. Now we want to understand which of these sets are specialization of others.

\begin{definition}\label{definition: def for ter3 of del pezzo}
Let $z\in\p^2$ and let $X=\Bl_z\p^2\xrightarrow{\mu}\p^2$ be the del Pezzo surface in $\p^8$. Let $E$ be the exceptional divisor. Define
\begin{align*}
Y&=\{\{p_1,p_2, p_3\}\in X^{(3)}\mid p_1,p_2, p_3\notin E\mbox{ and }\mu(p_1),\mu(p_2), \mu(p_3) \mbox{ are collinear}\},\\
T&=\{\{p_1,p_2, p_3\}\in X^{(3)}\mid p_1,p_2\in E\}\mbox{ and}\\
A&=\{\{p_1,p_2, p_3\}\in X^{(3)}\mid p_1\in E\mbox{, $p_2,p_3\notin E$ and } \mu(p_2)\in L_1\}.
\end{align*}
Finally, for $1\le i<j\le 3$ define 
$$B_{ij}=\{\{p_1,p_2, p_3\}\in X^{(3)}\mid p_1,p_2, p_3\notin E\mbox{ and } z\in\langle \mu(p_i),\mu(p_j)\rangle\}.$$
Notice that each of these sets is irreducible.
\end{definition}

\begin{lemma}\label{lemma: specializations for ter3 of del pezzo}
In the notation of \Cref{definition: def for ter3 of del pezzo}, $A\subseteq\overline{B_{12}}$ and $T\subseteq\overline{Y}$.
\begin{proof} Let $\{p_1,p_2,p_3\}\in A$, so that $\mu(p_2)\in L_1$. Let $q$ be a general point of $\tilde{L_1}$, so that $\mu(q)\in L_1=\langle z,\mu(p_2)\rangle$. Then $\{q,p_2,p_3\}\in B_{12}$. Let $q$ move along $\tilde{L_1}$ and specialize $q$ to $\tilde{L_1}\cap E$. In this way $q$ specializes to $p_1$, so $\{p_1,p_2,p_3\}$ is limit of elements of $B_{12}$. This proves the first claim.

Let $\{p_1,p_2,p_3\}$ be a general point of $T$, so that $p_1,p_2\in E$, $p_3\notin E$ and $\mu(p_3)\notin L_1$. Let $q_1$ be a general point of $\tilde{L_1}\setminus E$ and let $q_2=\mu^{-1}(L_2\cap\langle \mu(p_3),\mu(q_1)\rangle)$. Then $\{q_1,q_2,p_3\}\in Y$. Now let $\langle \mu(p_3),\mu(q_1)\rangle$ move along the pencil of lines through $\mu(p_3)$ and specialize it to $\langle\mu(p_3),z\rangle$. Then $q_1$ moves along $\tilde{L_1}$ and specializes to $p_1$. In the same way $q_2$ specializes to $p_2$. Hence $\{p_1,p_2,p_3\}$ is limit of elements of $Y$.
\end{proof}
\end{lemma}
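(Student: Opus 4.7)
My plan is to prove both inclusions by exhibiting, for each element of the source, a one-parameter family of elements of the target that specializes to it; the key technical point will be to choose the families so that the limiting position of a point on the blow-up corresponds to the correct tangent direction in the exceptional divisor $E$.

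For the inclusion $A\subseteq\overline{B_{12}}$, I would start from $\{p_1,p_2,p_3\}\in A$, with $p_1\in E$ corresponding to the line $L_1\subset\PP^2$ and $\mu(p_2)\in L_1$. The strict transform $\tilde L_1$ meets $E$ in exactly one point, namely $p_1$ itself, by definition of the correspondence between $E$ and tangent directions at $z$. So I would let $q$ be a variable point on $\tilde L_1\setminus E$, so that $\mu(q)$ varies on $L_1\setminus\{z\}$. Then $\mu(q)$, $\mu(p_2)$ and $z$ all lie on $L_1$, which means $z\in\langle\mu(q),\mu(p_2)\rangle$ and hence $\{q,p_2,p_3\}\in B_{12}$ (provided we stay off the few closed conditions that would force $q\in\{p_2,p_3\}$, which is automatic for generic $q$). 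Letting $q$ approach $E$ along $\tilde L_1$ gives $q\to p_1$ and exhibits $\{p_1,p_2,p_3\}$ as a limit of elements of $B_{12}$.

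For the inclusion $T\subseteq\overline{Y}$, fix a general $\{p_1,p_2,p_3\}\in T$ with $p_1,p_2\in E$, $p_3\notin E$, and let $L_1,L_2\subset\PP^2$ be the distinct lines through $z$ corresponding to $p_1,p_2$. The idea is to pick a moving line $M\subset\PP^2$ through $\mu(p_3)$ and take $q_1=\mu^{-1}(M\cap L_1)$, $q_2=\mu^{-1}(M\cap L_2)$ (both off $E$ and off each other for generic $M$). By construction $\mu(q_1),\mu(q_2),\mu(p_3)$ all lie on $M$, so $\{q_1,q_2,p_3\}\in Y$. Now I would let $M$ rotate in the pencil through $\mu(p_3)$ until it specializes to $\langle z,\mu(p_3)\rangle$. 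Then $M\cap L_1$ approaches $z$ along $L_1$ and $M\cap L_2$ approaches $z$ along $L_2$, so $q_1$ approaches the point of $\tilde L_1\cap E$, which is precisely $p_1$, and similarly $q_2\to p_2$. This realizes $\{p_1,p_2,p_3\}$ as a limit in $Y$.

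The main delicate step is the last one: as $M$ degenerates through $z$, the two intersection points $M\cap L_1$ and $M\cap L_2$ both collide at $z$, and one has to argue that on the blow-up the limits of $q_1$ and $q_2$ remember the directions $L_1$ and $L_2$ separately rather than degenerating indeterminately. This is exactly the content of the defining correspondence between $E$ and tangent directions at $z$, and is the only place where the geometry of the blow-up (as opposed to plain incidence in $\PP^2$) really enters; once this is invoked, both specializations go through cleanly.
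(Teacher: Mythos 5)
Your proposal is correct and follows essentially the same route as the paper: for $A\subseteq\overline{B_{12}}$ you move a point along $\tilde{L_1}$ toward $\tilde{L_1}\cap E=p_1$, and for $T\subseteq\overline{Y}$ you degenerate a line of the pencil through $\mu(p_3)$ to $\langle z,\mu(p_3)\rangle$, using that the blow-up separates the limits of $M\cap L_1$ and $M\cap L_2$ into the distinct points $p_1,p_2\in E$ according to their tangent directions. The only cosmetic difference is that the paper parametrizes the second family by the point $q_1\in\tilde{L_1}$ rather than by the line $M$, which is the same one-parameter family.
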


\begin{proposition}\label{cor: ter3 of del pezzo}
Let $z\in\p^2$ and let $X=\Bl_z\p^2\xrightarrow{\mu}\p^2$ be the del Pezzo surface in $\p^8$ with exceptional divisor $E$. Then
\[\Ter_3(X)=\overline{Y}\cup\overline{B_{12}}\cup\overline{B_{13}}\cup\overline{B_{23}}\]
has four components of dimension 5.
\end{proposition}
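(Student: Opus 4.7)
The plan is to derive a complete set-theoretic description of $\Ter_3(X)$ using the preceding lemmas, then apply the specialization results to collapse the union, and finally verify the dimension and the count of irreducible components. Since $h^1(\oo_X(1))=0$ for the del Pezzo surface, \Cref{def: Terracini locus with cohomology} tells us that $S\in\Ter_3(X)$ iff $h^0(I_{2S}\otimes\LL)>0$ (the expected value being $9-3\cdot 3=0$). When $S\cap E=\varnothing$, pushing $\LL$ down to $\p^2$ translates this into $h^0(I_{z+2\mu(S)}(3))>0$, and \Cref{lemma:speciality for ter3 dP1} says this happens iff $S\in Y\cup B_{12}\cup B_{13}\cup B_{23}$. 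When $S\cap E\neq\varnothing$, \Cref{lemma: r=3 S touches E is in ter} says $S\in T\cup A$ up to relabeling. Therefore
\[\Ter_3(X)=\overline{Y\cup T\cup A\cup B_{12}\cup B_{13}\cup B_{23}}.\]

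Next I would invoke \Cref{lemma: specializations for ter3 of del pezzo}: it gives $T\subseteq\overline Y$ and $A\subseteq\overline{B_{12}}$, and by the symmetry in the labels of the non-exceptional points the other relabelings of $A$ land in $\overline{B_{13}}$ and $\overline{B_{23}}$ by the same argument. Thus
\[\Ter_3(X)=\overline Y\cup\overline{B_{12}}\cup\overline{B_{13}}\cup\overline{B_{23}}.\]

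To verify that this is the decomposition into irreducible components, I would show that each of the four sets is irreducible of dimension $5$ and that no one is contained in another. For irreducibility and dimension, I would exhibit an explicit dominant parametrization: $Y$ is dominated by the variety of pairs $(\ell',\{q_1,q_2,q_3\})$ where $\ell'\subset\p^2$ is a line ($2$ parameters) and $q_1,q_2,q_3\in\ell'$ form an unordered triple ($3$ parameters), while each $B_{ij}$ is dominated by the choice of a line through $z$ ($1$ parameter), two points on it providing $\mu(p_i)$ and $\mu(p_j)$ ($2$ parameters), and a free third image $\mu(p_k)\in\p^2$ ($2$ parameters). For distinctness, since the four sets have the same dimension it suffices to exhibit a general element of each that does not lie in the others: a generic element of $Y$ has three collinear $\mu$-images, which does not happen generically in any $B_{ij}$, and a generic element of $B_{ij}$ forces collinearity of $z$ with the pair $(p_i,p_j)$ while the other pair stays in general position, a condition failing generically in each $B_{kl}$ with $\{k,l\}\neq\{i,j\}$.

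The main obstacle I foresee is bookkeeping of index permutations: both \Cref{lemma: r=3 S touches E is in ter} and \Cref{lemma: specializations for ter3 of del pezzo} are stated ``up to permutation of the indices,'' so when I collapse $A$ into the various $\overline{B_{ij}}$ I need to be careful that every symmetric instance of $A$ is accounted for, and symmetrically that each of $B_{12},B_{13},B_{23}$ is kept as its own piece rather than being silently identified with the others, so that no subset of $\Ter_3(X)$ is overlooked and the four components are correctly distinguished.
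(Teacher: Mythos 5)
Your assembly is exactly the paper's (implicit) proof: the proposition is stated there with no separate argument, as an immediate consequence of \Cref{lemma:speciality for ter3 dP1}, \Cref{lemma: r=3 S touches E is in ter} and \Cref{lemma: specializations for ter3 of del pezzo}, and your reduction to those three lemmas, together with the dimension counts, is carried out correctly.

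The one genuine problem is precisely the point you flag at the end and do not resolve: the distinctness of $\overline{B_{12}}$, $\overline{B_{13}}$, $\overline{B_{23}}$. As defined in \Cref{definition: def for ter3 of del pezzo}, the condition ``$z\in\langle\mu(p_i),\mu(p_j)\rangle$'' refers to labels of an \emph{unordered} triple $\{p_1,p_2,p_3\}\in X^{(3)}$, so each $B_{ij}$ is the same subset of $X^{(3)}$, namely the locus of triples for which some pair of images spans a line through $z$; it is the image of the single irreducible $5$-dimensional set $\{(p_1,p_2,p_3)\mid z\in\langle\mu(p_1),\mu(p_2)\rangle\}\subset X^3$ under the finite quotient $X^3\to X^{(3)}$, hence irreducible of dimension $5$. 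Consequently your closing argument that a generic element of $B_{ij}$ fails the defining condition of $B_{kl}$ does not work: whatever labelling exhibits membership in one exhibits membership in all three. In the symmetric product the locus therefore has two irreducible components of dimension $5$ ($\overline{Y}$ and the single $\overline{B}$); the count of four components is correct only for the unsymmetrized locus $\pi^{-1}(\Ter_3(X))\subset X^3$, where the three ordered versions of $B$ are genuinely distinct and permuted by $\mathfrak{S}_3$. This defect is inherited from the statement of the proposition itself (and is consistent with the software computation described in \Cref{section: algorithms}, which works in the ordered product), but since you explicitly identified it as the main obstacle, your proof is incomplete until you either pass to the ordered setting or correct the component count.
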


\section{Veronese varieties}
\label{section: veronese surfaces}
The purpose of this section is to describe the first nonempty Terracini loci of Veronese varieties. Intuitively we expect that, for small values of $r$, the only way for $r$ double points to give dependent conditions is to be in a very special position, with most of them lying on the same line. In  \Cref{propo: all we know about the Veronese} we confirm this expectation. Denote by $\V_n^d$ the image of the embedding $\nu_d:\p^n\to\p^{\binom{n+d}{d}-1}$ via the complete linear system of divisors of degree $d$. By \cref{def: Terracini locus with cohomology}, if $S\in(\PP^n)^{(r)}$ then $\nu_d(S)\in\Ter_r(\V_n^d)$ if and only if $h^1(I_{2S,\p^n}(d))>0$.

Thanks to \Cref{corol: rational and elliptic normal curves} we already know that $\Ter_r(\V_1^d)=\varnothing$ for every $r$, so we assume that $n\ge 2$. Moreover, \cite{AH} shows that $\V_n^2$ is $r$-defective for every relevant value of $r$. As we noted in \Cref{rmk: we assume subabundant and nondefective}, this is not very interesting, so we further assume that $d\ge 3$.

Recall that if $Z$ is a zero-dimensional scheme and $H$ is the hyperplane in $\p^n$ defined by the form $\ell$, the \emph{residue} of $Z$ with respect to $H$ is the subscheme $\Res_H(Z)\subset H$ defined by the colon ideal $H^0(I_{Z,\p^n}) : (\ell)$. For every positive integer $d$ we can consider the exact sequence of sheaves on $\p^n$

\begin{equation}\label{eq: restriction exact sequence}
0\to
I_{\Res_H(Z),\p^n}(d-1)\to
I_{Z,\p^n}(d)\to
I_{Z\cap H,H}(d)\to 0.
\end{equation}
At the level of global section, the map $H^0(I_{Z,\p^n}(d))\to H^0
(I_{Z\cap H,H }(d))$ is the restriction to $H$, while the map $H^0(I_{\Res_H(Z),\p^n}(d-1))\to H^0(
I_{Z,\p^n}(d))$ is the multiplication by $\ell$. A similar construction works not only on $\p^n$, but on any smooth variety. In \Cref{sec: SegreVeronese} we will employ it on a multi-projective space.

\begin{lemma}\label{lemma: induction for veronese terracini}
Suppose $n\geq 2$, $d\geq 3$ and let $S\subset\PP^n$ be a set of $r\geq 2$ points. Let $H\subset\PP^n$ be a hyperplane such that $S\cap H$ consists of $s\leq\frac{1}{n}\binom{n+d-1}{d}$ points in general position on $H$.  If $\nu_{d-2}(S\setminus H)\notin\Ter_{r-s}(\V_n^{d-2})$, then $\nu_d(S)\notin\Ter_r(\V_n^d)$ unless $(n-1,d,s)\in\{(2,4,5),(3,4,9),(4,3,7),(4,4,14)\}$.
\end{lemma}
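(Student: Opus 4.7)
The plan is to establish $h^1(I_{2S,\PP^n}(d)) = 0$, which by \Cref{def: Terracini locus with cohomology} (combined with $h^1(\oo_{\PP^n}(d))=0$) is equivalent to $\nu_d(S) \notin \Ter_r(\V_n^d)$. Write $S_1 = S \cap H$ and $S_2 = S \setminus H$, so $|S_1| = s$ and $|S_2| = r - s$. The strategy is to apply the residue exact sequence \eqref{eq: restriction exact sequence} twice with respect to the same hyperplane $H$: first to the scheme $Z = 2S$ to drop from degree $d$ to degree $d-1$, and then to the resulting residual scheme to drop again to degree $d-2$, where the hypothesis on $S_2$ activates.

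First, since $2S_2 \cap H = \varnothing$ while for every $p \in S_1$ the scheme $2p$ meets $H$ in a double point of $H$ and has residue the reduced point $p$, the sequence \eqref{eq: restriction exact sequence} takes the form
\[
0 \to I_{S_1 \cup 2S_2,\,\PP^n}(d-1) \to I_{2S,\,\PP^n}(d) \to I_{2S_1,\,H}(d) \to 0,
\]
reducing the problem to showing that both end terms have vanishing $h^1$. For the right-hand term, the $s$ general points of $S_1$ on $H \cong \PP^{n-1}$ together with the bound $sn \le \binom{n+d-1}{d}$ put us in the subabundant range for double points on $\PP^{n-1}$. By the Alexander-Hirschowitz theorem, $s$ general double points in $\PP^{n-1}$ impose independent conditions on degree-$d$ forms for every $d \geq 3$, except precisely for the exceptional list $(n-1, d, s) \in \{(2,4,5), (3,4,9), (4,3,7), (4,4,14)\}$, which is exactly the excluded set in the lemma. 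Outside those cases, $h^1(I_{2S_1, H}(d)) = 0$.

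For the left-hand term, I would apply \eqref{eq: restriction exact sequence} again with $Z = S_1 \cup 2S_2$ and the same hyperplane $H$. Since $S_1 \subset H$ is reduced we get $\Res_H(S_1) = \varnothing$, and since $2S_2$ is disjoint from $H$ it is unchanged, giving $\Res_H(S_1 \cup 2S_2) = 2S_2$ and $(S_1 \cup 2S_2) \cap H = S_1$. The second residue sequence reads
\[
0 \to I_{2S_2,\,\PP^n}(d-2) \to I_{S_1 \cup 2S_2,\,\PP^n}(d-1) \to I_{S_1,\,H}(d-1) \to 0.
\]
The left end has vanishing $h^1$ precisely by the hypothesis $\nu_{d-2}(S \setminus H) \notin \Ter_{r-s}(\V_n^{d-2})$. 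For the right end, the identity $\binom{n+d-1}{d} = \frac{n+d-1}{d}\binom{n+d-2}{d-1}$ together with $(n-1)(d-1) \ge 0$ yields $\frac{1}{n}\binom{n+d-1}{d} \le \binom{n+d-2}{d-1}$, so $s$ does not exceed $h^0(\oo_H(d-1))$, and general simple points on $\PP^{n-1}$ impose independent conditions on every linear system whose dimension they do not exceed; hence $h^1(I_{S_1, H}(d-1)) = 0$.

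The main obstacle is essentially bookkeeping: one must verify that the numerical hypothesis $s \leq \frac{1}{n}\binom{n+d-1}{d}$ matches exactly the subabundant regime for double points on $\PP^{n-1}$ at degree $d$ (to invoke Alexander-Hirschowitz with the stated exceptions) and simultaneously places $S_1$ in the non-special range as simple points at degree $d-1$. Once both inequalities are checked, the two residue sequences combine mechanically to give the vanishing, and the only genuine failure mode of the argument — the failure of Alexander-Hirschowitz on $H$ — is precisely what produces the listed exceptional quadruples.
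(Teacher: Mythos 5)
Your proposal is correct and follows essentially the same route as the paper: two applications of the residue exact sequence with respect to $H$, with Alexander--Hirschowitz handling the trace $2S_1$ on $H$ in degree $d$ (whose exceptional cases give exactly the excluded quadruples), the hypothesis on $S\setminus H$ handling the degree-$(d-2)$ kernel, and the numerical bound $s\le\frac1n\binom{n+d-1}{d}\le\binom{n+d-2}{d-1}$ handling the simple points $S_1$ in degree $d-1$. The only cosmetic difference is that you write out the residual scheme $\Res_H(2S)=S_1\cup 2S_2$ explicitly, which the paper leaves implicit.
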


\begin{proof}
We are going to apply the restriction exact sequence \eqref{eq: restriction exact sequence} with $Z=2S$. The exact sequence of sheaves
\[0\to
I_{\Res_H(2S),\p^n}(d-1)\to
I_{2S,\p^n}(d)\to
I_{2S\cap H,H}(d)\to 0\]
induces the exact sequence of vector spaces
\[
H^1(I_{\Res_H(2S),\p^n}(d-1))\to
H^1(I_{2S,\p^n}(d))\to
H^1(I_{2S\cap H,H}(d)).
\]
Consider the right hand side. Thanks to our hypothesis, $2S\cap H$ is a scheme of $s\leq\frac{1}{n}\binom{n+d-1}{d}$ double points in general position on $H$, so 
$h^1(I_{2S\cap H, H}(d))=0$ by \cite{AH}. In order to conclude, it is enough to show that $h^1(I_{\Res_H(2S),\p^n}(d-1))=0$. We apply sequence \eqref{eq: restriction exact sequence} again, with $Z=\Res_H(2S)$, and we obtain
\[0\to
I_{2(S\setminus H),\p^n}(d-2)\to
I_{\Res_H(2S),\p^n}(d-1)\to
I_{S\cap H,H}(d-1)\to 0.\]
Our hypothesis that $\nu_{d-2}(S\setminus H)\notin\Ter_{r-s}(\V_n^{d-2})$ guarantees that $h^1(I_{2(S\setminus H),\p^n}(d-2))=0$. On the right hand side, $h^1(I_{S\cap H,H}(d-1))=0$ because $S\cap H$ is a set of 
\[s\leq\frac{1}{n}\binom{n+d-1}{d}=\frac{n+d-1}{nd}\binom{n+d-2}{d-1}<\left(\frac{1}{n}+\frac{1}{d}\right)\binom{n+d-2}{d-1}<\binom{n+d-2}{d-1}\]
simple points in general position. Hence $h^1(I_{\Res_H(2S),\p^n}(d-1))=0$.
\end{proof}

\Cref{lemma: induction for veronese terracini} can be used to determine when a given set of points does not belong to the Terracini locus of a Veronese variety, but the next result can be used to say when one \textit{does} belong.

\begin{lemma}\label{lemma: veronese points on subspace}
Suppose $S\subset\PP^n$ is a set of $r$ points, $s$ of which lie on a linear subspace $P\cong\PP^m$.  If $s>\frac{1}{m+1}\binom{m+d}{d}$, then $\nu_d(S)\in\Ter_r(\V_n^d)$.
\end{lemma}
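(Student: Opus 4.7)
The plan is to reduce the claim to a dimension count on $P$ and then transport the resulting linear dependence back to $\PP^n$. Using the cohomological reformulation from \Cref{def: Terracini locus with cohomology} (applicable because $h^1(\oo_{\PP^n}(d))=0$), the statement $\nu_d(S)\in\Ter_r(\V_n^d)$ is equivalent to the failure of surjectivity of the evaluation map $\alpha\colon H^0(\oo_{\PP^n}(d))\to H^0(\oo_{2S})$.

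First I would consider the analogous evaluation map $\delta\colon H^0(\oo_P(d))\to H^0(\oo_{2S',P})$ on $P\cong\PP^m$, where $S'=S\cap P$. Since $P$ is linear, $h^0(\oo_P(d))=\binom{m+d}{d}$ and $h^0(\oo_{2S',P})=s(m+1)$, so the hypothesis $s(m+1)>\binom{m+d}{d}$ forces $\delta$ to be non-surjective for purely dimensional reasons. Choose any nonzero linear functional $\eta$ on $H^0(\oo_{2S',P})$ vanishing on $\mathrm{im}(\delta)$.

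Next I would assemble the commutative square
\[
\begin{tikzcd}
H^0(\oo_{\PP^n}(d)) \arrow[r,"\alpha"] \arrow[d,"\beta"'] & H^0(\oo_{2S}) \arrow[d,"\gamma"] \\
H^0(\oo_P(d)) \arrow[r,"\delta"] & H^0(\oo_{2S',P})
\end{tikzcd}
\]
where $\beta$ is restriction of degree-$d$ forms from $\PP^n$ to $P$ and $\gamma$ kills each summand of $H^0(\oo_{2S})=\bigoplus_{p\in S}H^0(\oo_{2p,\PP^n})$ indexed by $p\in S\setminus S'$ and, on the remaining summands, is the quotient $\C^{n+1}\twoheadrightarrow\C^{m+1}$ restricting $1$-jets from $\PP^n$ to $P$. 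Commutativity is a straightforward check: for $F\in H^0(\oo_{\PP^n}(d))$ and $p\in S'$, both compositions produce the value $F(p)$ together with the derivative of $F$ along $T_pP$.

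Crucially $\gamma$ is surjective, so the pullback $\gamma^*\eta$ is a nonzero functional on $H^0(\oo_{2S})$. Commutativity then gives $(\gamma^*\eta)(\alpha(F))=\eta(\delta(\beta(F)))=0$ for every $F$, so $\alpha$ is not surjective. Hence $h^1(I_{2S,\PP^n}(d))>0$ and $\nu_d(S)\in\Ter_r(\V_n^d)$. The only substantive ingredient is the surjectivity of $\gamma$, which rests on $P$ being a linear subspace so that jets in $\PP^n$ restrict onto jets in $P$; the rest of the argument is formal.
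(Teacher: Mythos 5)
Your proof is correct, but it takes a more hands-on route than the paper. The paper's proof is a three-line reduction: since $s(m+1)>\binom{m+d}{d}$, the value $s$ exceeds the subabundance bound \eqref{eq: subabundant} for $\nu_d(P)\cong\V_m^d$, so $\nu_d(S\cap P)$ lies in $\Ter_s(\nu_d(P))$ for the vacuous reason recorded in \Cref{rmk: we assume subabundant and nondefective}; then \Cref{propos: Ter of a subvariety} promotes this to membership in the Terracini locus of the ambient Veronese, and the ``a fortiori'' mechanism of \Cref{propos: Ter are nested} absorbs the remaining $r-s$ points. Your commutative square is in effect a simultaneous, explicit re-proof of both of those propositions in this special case: the surjectivity of $\gamma$ (restriction of $1$-jets from $\PP^n$ to the linear subspace $P$, and projection away from the summands of $S\setminus P$) is exactly what makes ``fails to impose independent conditions on $P$'' imply ``fails to impose independent conditions on $\PP^n$.'' What your version buys is self-containedness and a concrete witness: the functional $\gamma^*\eta$ exhibits the linear dependence among the conditions imposed by $2S$. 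What the paper's version buys is brevity and reusability, since the same two general propositions are invoked throughout the paper. Both arguments hinge on the identical dimension count $s(m+1)>\binom{m+d}{d}$, so there is no gap to flag; the only cosmetic point is that you write $\C$ where the paper works over a general algebraically closed field $\kk$ of characteristic $0$, which changes nothing.
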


\begin{proof}
Note that $\nu_d(P)\cong \V_m^d$, and so $\nu_d(S\cap P)\in\Ter_s(\nu_d(P))$ by \cref{rmk: we assume subabundant and nondefective}.  By \cref{propos: Ter are nested} this implies that $\nu_d(S)\in\Ter_r(\nu_d(P))$, and so the result follows by \cref{propos: Ter of a subvariety}.
\end{proof}

By \cite[Theorem 1.1]{LM23} $\Ter_r(\V_n^d)=\varnothing$ if and only if $2r<d+2$. The next result shows that being collinear is the only possibility for a set of point to belong to the first nonempety Terracini locus of a Veronese variety. 

\begin{proposition}\label{proposition: first nonempty terracini}
Let $n\geq 2$ and $d\geq 3$,  and let $r=\lceil \frac{d+2}{2} \rceil$. If $S\subset \PP^n$ be a set of $r$ points that are not collinear, then $\nu_d(S)\not\in\Ter_r(\V_n^d)$.
\end{proposition}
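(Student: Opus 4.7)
The plan is to prove the statement by induction on $d \ge 3$, using \cref{lemma: induction for veronese terracini} to drop the degree by two at each step. Since $\lceil (d+2)/2 \rceil - \lceil d/2 \rceil = 1$, the natural choice is to apply \cref{lemma: induction for veronese terracini} with $s = 1$ in the inductive step and with $s = 2$ in the base cases $d \in \{3,4\}$ (both giving $r = 3$). The degree reduction $d \mapsto d-2$ avoids the defective Veronese $\V_n^2$, so the recursion bottoms out cleanly at $d = 3,4$.

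For the base cases, given three non-collinear points $\{p_1,p_2,p_3\} \subset \PP^n$, I would choose a hyperplane $H$ containing $p_1,p_2$ but missing $p_3$ (which exists precisely because $S$ is non-collinear: when $n=2$ the unique line $\langle p_1,p_2 \rangle$ avoids $p_3$, and for $n \ge 3$ a generic hyperplane through $p_1,p_2$ works) and then apply \cref{lemma: induction for veronese terracini} with $s = 2$. A quick check shows $2 \le \frac{1}{n}\binom{n+d-1}{d}$ (with equality only at $(n,d)=(2,3)$), and none of the four exceptional triples $(n-1,d,s)$ is hit. The hypothesis $\nu_{d-2}(\{p_3\}) \notin \Ter_1(\V_n^{d-2})$ is vacuous because $\Ter_1 = \varnothing$ by \cref{rmk: we assume subabundant and nondefective}.

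For the inductive step $d \ge 5$ (hence $r \ge 4$), I aim to select a hyperplane $H$ meeting $S$ in a single point, arranged so that $S \setminus H$ consists of $r-1$ non-collinear points; the inductive hypothesis then yields $\nu_{d-2}(S \setminus H) \notin \Ter_{r-1}(\V_n^{d-2})$, and \cref{lemma: induction for veronese terracini} with $s = 1$ concludes. Two subcases arise. If no $(r-1)$-subset of $S$ is collinear, any choice of the point to place in $H$ works. Otherwise some line $\ell$ carries $r-1$ of the points, say $p_1,\dots,p_{r-1}$, while $p_r \notin \ell$; in this situation I would put one of the collinear points (say $p_1$) into $H$, so that $S \setminus H$ consists of $r-2 \ge 2$ points of $\ell$ together with $p_r$ off $\ell$, and is therefore non-collinear.

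The main obstacle is this second subcase: it is tempting but disastrous to place $p_r$ into $H$, since that would leave all $r-1$ collinear points in the residual and invalidate the inductive hypothesis. The rescue — placing a collinear point in $H$ — only yields a non-collinear residual when $r - 2 \ge 2$, which is precisely why $d \in \{3,4\}$ (i.e., $r = 3$) must be handled as a separate base case. The remaining verifications — that $s \le \frac{1}{n}\binom{n+d-1}{d}$ holds (trivial for $s=1$ since $\binom{n+d-1}{d} \ge n$ once $d \ge 2$), that $(n-1,d,s) \notin \{(2,4,5),(3,4,9),(4,3,7),(4,4,14)\}$ for $s \in \{1,2\}$, and that a hyperplane through one marked point missing finitely many others always exists — are all routine dimension counts in $(\PP^n)^*$.
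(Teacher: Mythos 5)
Your proposal is correct and follows essentially the same route as the paper: an induction on $r$ (equivalently on $d$) driven by Lemma~\ref{lemma: induction for veronese terracini}, with base case $r=3$ and, in the inductive step, a hyperplane through a single point of $S$ chosen so that the residual $r-1$ points remain non-collinear. The only cosmetic differences are that the paper disposes of the base case by noting that three non-collinear points are projectively general and citing Alexander--Hirschowitz, whereas you reuse the lemma with $s=2$ and the vacuous condition $\Ter_1=\varnothing$, and that you spell out the existence of the required hyperplane (removing one of the collinear points when $r-1$ of them lie on a line), a point the paper leaves implicit.
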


\begin{proof}
We will prove the result by induction on $r$.  Let $r=3$.  Since $S$ is not contained in a line, it is a set of three general points and $\nu_d(S)\not\in\Ter_3(\V_n^d)$ by \cite{AH}.

Now let $r\geq 4 $. Let $H$ be an hyperplane such that $H\cap S=\{p\}$ for some $p\in S$ for which $S\setminus \{p\}$ is not contained in any line. Since $r-1=\ru{\frac{d}{2}}=\ru{\frac{(d-2)+2}{2}}$, we assume by induction hypothesis that $\nu_{d-2}(S\setminus\{p\})\not\in\Ter_{r-1}(\V_n^{d-2})$, and so the result follows by \cref{lemma: induction for veronese terracini}.
\end{proof}

Taking a step further, we focus on the next nonempty Terracini locus of a Veronese variety.  We need to treat the case of cubics separately.

\begin{proposition}\label{proposition: second nonempty terracini for cubics}
Suppose $n\geq 3$.  If $S\subset\PP^n$ is a set of 4 points not lying on a common plane, then $\nu_3(S)\not\in\Ter_4(\V_n^3)$.
\end{proposition}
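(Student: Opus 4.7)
By \cref{def: Terracini locus with cohomology} it suffices to prove that $h^1(I_{2S,\PP^n}(3))=0$. Non-coplanarity of $S$ forces the linear span of the four points to be a $\PP^3$, and in particular no three of them are collinear (three collinear points together with any fourth point would automatically be coplanar). I argue by induction on $n$.

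For $n\geq 4$, choose a hyperplane $H\subset\PP^n$ containing the $\PP^3$ spanned by $S$. Since $S\subseteq H$, we have $\Res_H(2S)=S$ and $2S\cap H=2S$ as a subscheme of $H\cong\PP^{n-1}$, so the residue sequence specializes to
\[
0\to I_{S,\PP^n}(2)\to I_{2S,\PP^n}(3)\to I_{2S,H}(3)\to 0.
\]
Four points spanning a $\PP^3$ are separated by linear forms, so they impose independent conditions on quadrics and $h^1(I_{S,\PP^n}(2))=0$; the inductive hypothesis gives $h^1(I_{2S,H}(3))=0$; hence $h^1(I_{2S,\PP^n}(3))=0$.

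For the base case $n=3$, I would take $H$ to be the plane through $p_1,p_2,p_3$, which by non-coplanarity does not contain $p_4$. Then $2S\cap H=2p_1+2p_2+2p_3$ as a scheme in $H\cong\PP^2$, and $\Res_H(2S)=\{p_1,p_2,p_3\}+2p_4$. On the restriction side, the triangle $\overline{p_1p_2}\cup\overline{p_1p_3}\cup\overline{p_2p_3}$ is the unique plane cubic singular at each of the three vertices, so $h^1(I_{2p_1+2p_2+2p_3,H}(3))=0$. For the residue, any quadric in $\PP^3$ singular at the smooth point $p_4$ is a cone with apex $p_4$, and such cones are parametrized by the conics in a fixed auxiliary plane $\Pi$ not through $p_4$ via their base. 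The cones that also pass through $p_1,p_2,p_3$ correspond to conics in $\Pi$ through the projections $\bar p_1,\bar p_2,\bar p_3$ from $p_4$; since no three points of $S$ are collinear, these projections are three distinct points in $\Pi\cong\PP^2$, so the linear system of conics through them is $3$-dimensional. An Euler characteristic count then gives
\[
h^1(I_{\{p_1,p_2,p_3\}+2p_4,\PP^3}(2))=(3+4)+3-10=0,
\]
and the residue sequence yields $h^1(I_{2S,\PP^3}(3))=0$.

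The main subtlety lies in the base case. A naive choice of residue — say the plane through $p_2,p_3,p_4$ — leaves three double points in $\PP^2$ on the restriction side, and these fail to impose independent conditions on quadrics, so the estimate collapses. Choosing the hyperplane so that the residue isolates a single double point $2p_4$ is what makes the cone parametrization available, and non-coplanarity is used precisely to guarantee that the three projections into $\Pi$ remain distinct.
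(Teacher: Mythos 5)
Your proof is correct, but it takes a genuinely different route from the paper's. The paper chooses a hyperplane $H$ through exactly three of the four points — which are automatically non-collinear, hence in general position on $H$ — and applies \cref{lemma: induction for veronese terracini} with $s=3$, $d=3$: the trace is three general double points of $H\cong\PP^{n-1}$ on cubics, handled by citing \cite{AH}, and the double residue leaves a single simple point, handled by $\Ter_1(\V_n^1)=\varnothing$; no induction on $n$ is needed. You instead first slice down to $\PP^3$ using hyperplanes containing all of $S$ (so the residue is four linearly independent points on quadrics), and in the base case you compute both sides of the residue sequence by hand: the uniqueness of the triangle cubic for the trace, and the cone parametrization of quadrics singular at $p_4$ for the residue. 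Your argument is more elementary and self-contained — it avoids both the machinery of \cref{lemma: induction for veronese terracini} and the appeal to Alexander--Hirschowitz — at the cost of length; the paper's proof is a two-line application of a lemma it has already established. One small quibble with your closing commentary: the ``naive choice'' of the plane through $p_2,p_3,p_4$ is the same kind of choice as yours up to relabeling, and in your decomposition the three double points sit on the trace side in degree $3$ (where they do impose independent conditions), not on quadrics; the configuration that actually collapses is the one where three double points land on the residue side in degree $2$. This does not affect the validity of the proof itself.
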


\begin{proof}
By assumption we can find a hyperplane $H\subset\PP^n$ containing exactly 3 points of $S$.  Note that these 3 points are not collinear, or otherwise $S$ would be contained in a plane. This means that $S\cap H$ is in general position.  Since $\Ter_1(\V_n^1)
=\varnothing$ by \cref{rmk: we assume subabundant and nondefective} and $3\leq\frac{1}{n}\binom{n+2}{3}$ for all $n\geq 3$, the result follows by \cref{lemma: induction for veronese terracini}.
\end{proof}

\begin{proposition}\label{proposition: second nonempty terracini}
Let $n\geq 2$ and $d\ge 4$, and let $r=\lceil \frac{d+2}{2} \rceil+1$. Let $S\subset \PP^n$ be a set of $r$ points such that no $r-1$ points of $S$ are collinear. Then $\nu_d(S)\notin\Ter_r(\V_n^d)$.
\end{proposition}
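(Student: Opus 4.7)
I would argue by induction on $d$, mirroring the strategy of \Cref{proposition: first nonempty terracini}. For the inductive step $d \geq 6$, the idea is to pick a point $p \in S$ and a hyperplane $H$ through $p$ meeting $S$ nowhere else; such an $H$ exists because the family of hyperplanes through $p$ has positive dimension $n - 1$. Applying \Cref{lemma: induction for veronese terracini} with $s = 1$ then reduces the assertion to $\nu_{d-2}(S \setminus \{p\}) \not\in \Ter_{r-1}(\V_n^{d-2})$, and since $r - 1 = \lceil d/2\rceil + 1$ is precisely the threshold of this Proposition at degree $d - 2$, the inductive hypothesis closes the argument as soon as no $r - 2$ points of $S \setminus \{p\}$ are collinear.

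The key technical point is the choice of $p$. The hypothesis allows up to $r - 2$ collinear points in $S$, so if a line $\ell$ contains exactly $r - 2$ points of $S$, $p$ must be chosen on $\ell$ so that $\ell$ carries only $r - 3$ points of $S \setminus \{p\}$. For $d \geq 7$ (hence $r \geq 6$), two distinct lines each containing $r - 2 \geq 4$ points of $S$ would share at least $2(r - 2) - r = r - 4 \geq 2$ points and must coincide, so at most one such line exists; the borderline case $d = 6$ permits up to two such lines, which in that situation necessarily share a single common point that can serve as $p$. Once $p$ is fixed, any hyperplane $H$ through $p$ generic among those through $p$ avoids the remaining $r - 1$ points, and no exceptional triple of \Cref{lemma: induction for veronese terracini} is hit when $s = 1$.

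The base cases $d = 4$ and $d = 5$ are the main obstacle, because $\V_n^2$ is defective (blocking the $s = 1$ reduction for $d = 4$) and \Cref{proposition: second nonempty terracini for cubics} only addresses non-coplanar configurations (blocking $s = 1$ for $d = 5$). My plan is to apply \Cref{lemma: induction for veronese terracini} with a larger $s$ adapted to each case: for $d = 4$, $n \geq 3$ with $S$ not contained in a plane, take $s = 3$ with $H$ through three non-collinear points of $S$ (so the residual is a single point and $\Ter_1(\V_n^{2}) = \varnothing$); for $d = 5$, $n \geq 3$ with $S$ non-coplanar, take $s = 1$ choosing $p$ so that the four residual points are not coplanar and invoke \Cref{proposition: second nonempty terracini for cubics}; for $d = 5$, $n = 2$, take $s = 2$ with $H$ a line through two suitably chosen points so that the three residuals are not collinear. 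The remaining scenarios, in which $S$ spans a proper linear subspace $\Pi \cong \PP^m \subsetneq \PP^n$, I would reduce to the same assertion for $\V_m^d$ by iteratively restricting along hyperplanes containing $\Pi$: at each step the sequence \eqref{eq: restriction exact sequence} applied with $Z = 2S$ gives $\Res_H(2S) = S$, and the vanishing $h^1(I_{S,\PP^n}(d-1)) = 0$ (which follows because the collinearity hypothesis forces $S$ to impose independent conditions on degree $d-1$ hypersurfaces) implies $h^1(I_{2S,\PP^n}(d)) = 0$ as soon as $h^1(I_{2S,H}(d)) = 0$. The base cases ultimately reduce to a direct cohomological computation on $\PP^2$, matching $h^0(I_{2S,\PP^2}(d))$ against its expected dimension via explicit elements such as products of lines and conics through $S$.
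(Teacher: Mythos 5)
Your proposal is correct and follows essentially the same route as the paper: induction on $d$ via \Cref{lemma: induction for veronese terracini}, with the base cases $d=4,5$ handled by hyperplanes through several points of $S$ and the degenerate (planar) configurations reduced to the linear span via the residual exact sequence \eqref{eq: restriction exact sequence}. The only divergences are in bookkeeping --- your analysis of which point to remove when a line carries $r-2$ points of $S$ is in fact more explicit than the paper's, you use $s=1$ where the paper uses $s=2$ in part of the $d=5$ case, and you propose a direct planar computation where the paper cites Alexander--Hirschowitz --- but the key lemma and the overall induction are identical.
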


\begin{proof}
We argue by induction on $d$. First suppose that $d=4$, so $S$ has four points. If $S$ is not contained in a plane, then there exists a hyperplane $H$ containing exactly 3 points of $S$, which are not collinear by assumption. In this case we proceed as in the proof of \cref{proposition: second nonempty terracini for cubics} and we conclude that $\nu_4(S)\notin\Ter_4(\V_n^4)$.  Now assume that $S$ is contained in a plane $P$. Since $S$ does not contain three collinear points, $h^1(I_{2S,P}(4))=0$ by \cite{AH}, so if $n=2$ we are done. If $n\ge 3$, then let $H_1,\dots,H_{n-2}$ be hyperplanes such that $P=H_1\cap\dots\cap H_{n-2}$. The short exact sequence of sheaves \eqref{eq: restriction exact sequence}, restricting the scheme $2S$ to the hyperplane $H_1$, gives the exact sequence of vector spaces
$$ H^1(I_{S,\p^n}(3))\to H^1(I_{2S,\p^n}(4))\to H^1(I_{2S, \p^{n-1}}(4)).
$$
On the left hand side we have $h^1(I_{S,\PP^n}(3))=0$ because for every $p\in S$ we can always find a reducible cubic containing $S\setminus\{p\}$ but not containing $p$.
In order to prove that $h^1(I_{2S,\PP^{n-1}}(4))=0$, consider again the short exact sequence \eqref{eq: restriction exact sequence}, restricting the scheme $2S$ to $H_1\cap H_2=\p^{n-2}$, and so on recursively. After $n-2$ steps we have $h^1(I_{2S,\PP^n}(4))=0$ since
$$0=H^1(I_{S,\p^3}(3))\to H^1(I_{2S,\p^3}(4))\to H^1(I_{2S, P}(4))=0.
$$
Next, suppose that $d=5$ and let $S=\{p_1,\dots,p_5\}$. We want to show that there exists a hyperplane $H\subset\p^n$ containing exactly two points of $S$, such that the three points of $S\setminus H$ are not collinear. Indeed, if no three points of $S$ are on a line, then we pick $H$ a general hyperplane containing $p_1$ and $p_2$. If $p_1,p_2,p_3$ are collinear and there is no $i\in \{1,2,3\}$ such that $p_i,p_4,p_5$ are collinear, then we pick $H$ a general hyperplane containing $p_1$ and $p_4$. If $p_1,p_2,p_3$ are collinear and there exists $i\in\{1,2,3\}$ such that $p_i,p_4,p_5$ are collinear, then our hypothesis that no 4 points of $S$ are collinear guarantees that we can pick a general hyperplane $H$ containing $\langle p_j,p_k\rangle$ with $j\in \{1,2,3\}\setminus\{i\}$ and $k\in\{4,5\}$. 
Since $\nu_3(S\setminus H) \notin \Ter_3(\V_n^3)$ by \cite{AH}, it follows that $\nu_5(S)\notin\Ter_5(\V_n^5)$ by \cref{lemma: induction for veronese terracini}.

Finally, assume that $d\geq 6$ and let $S=\{p_1,\dots,p_r\}$ be a set of $r$ points such that no  $r-1$ points of $S$ are collinear. Let $H$ be an hyperplane such that $S\cap H$ consists of one point and $S\setminus H$ contains no subset of $r-2$ collinear points. Such hyperplane always exists because even if $p_1,\ldots,p_{r-2}$ are collinear, we can take the general hyperplane through $p_1$. Since $$\#(S\setminus H)=r-1=\ru{\frac{d+2}{2}}=\ru{\frac{(d-2)+2}{2}}+1,$$
by induction hypothesis we have $\nu_{d-2}(S\setminus H)\notin\Ter_{r-1}(\V_n^{d-2})$, and so we conclude by \cref{lemma: induction for veronese terracini}.
\end{proof}

We have all the tools to prove our result on Veronese varieties.

\begin{theorem}\label{propo: all we know about the Veronese}Let $n$ and $d$ be integers such that $n\ge 2$ and $d\ge 3$ and let $N=\binom{n+d}{n}-1$. Let $\nu_d:\p^n\to 
\V_n^d\subset\p^N
$ be the $d$-Veronese embedding.  
\begin{itemize}
\item If $r=\ru{\frac{d+2}{2}}$, then $\Ter_r(\V_n^d)=\overline{\{\nu_d(S)\mid S\in (\p^n)^{(r)}\mbox{ is contained in a line}\}}$. In particular, it is irreducible of dimension $2n+r-2$.
\item $\Ter_4(\V_n^3)=\overline{\{\nu_3(S)\mid S\in (\p^n)^{(4)}\mbox{ is contained in a plane}\}}$.  In particular, it is irreducible of dimension $3n + 2$.
\item If $d\geq 4$ and $r=\ru{\frac{d+2}{2}}+1$, then $$\Ter_r(\V_n^d)=\overline{\{\nu_d(S\cup \{p\})\mid S\in (\p^n)^{(r-1)}\mbox{ is contained in a line}  \mbox{ and } p\in\p^n\setminus S\}
}.$$ In particular, it is irreducible of dimension $3n+r-3$.
\end{itemize}\end{theorem}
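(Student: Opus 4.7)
The plan is to deduce the theorem from the three preceding propositions of this section: each supplies exactly one inclusion, and the reverse inclusions come from \Cref{lemma: veronese points on subspace} combined, in the third case, with the nesting \Cref{propos: Ter are nested}. Dimensions and irreducibility follow from a routine Grassmannian parameter count.

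For the first bullet, the containment $\subseteq$ is exactly \Cref{proposition: first nonempty terracini}: a non-collinear set of $r = \ru{(d+2)/2}$ points cannot give a Terracini element. For $\supseteq$, I would apply \Cref{lemma: veronese points on subspace} with $m = 1$: the hypothesis $s > \binom{d+1}{d}/2 = (d+1)/2$ is satisfied by $s = r = \ru{(d+2)/2}$ regardless of the parity of $d$, so every collinear $r$-tuple lies in $\Ter_r(\V_n^d)$. Presenting this stratum as the image of the incidence correspondence $\{(L,S) : L \text{ a line in } \PP^n,\ S \in L^{(r)}\}$, which fibers over the Grassmannian of lines with $r$-dimensional fibers, gives an irreducible variety of dimension $2(n-1) + r = 2n + r - 2$. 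The second bullet is handled identically: \Cref{proposition: second nonempty terracini for cubics} gives $\subseteq$, and \Cref{lemma: veronese points on subspace} with $m = 2$, $d = 3$, $s = 4 > 10/3$ gives $\supseteq$. The corresponding incidence over the Grassmannian of planes in $\PP^n$ produces dimension $3(n-2) + 8 = 3n + 2$.

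For the third bullet, \Cref{proposition: second nonempty terracini} supplies $\subseteq$: if no $r-1$ points of $S$ are collinear, then $\nu_d(S) \notin \Ter_r(\V_n^d)$, so every element of $\Ter_r(\V_n^d)$ must contain a collinear subset of size $r-1$. For the reverse inclusion, observe that $r - 1 = \ru{(d+2)/2}$, so by the first bullet any set $S'$ of $r - 1$ collinear points already lies in $\Ter_{r-1}(\V_n^d)$; the proof of \Cref{propos: Ter are nested} then shows that $\nu_d(S' \cup \{p\}) \in \Ter_r(\V_n^d)$ for every $p \in \PP^n \setminus S'$. The stratum is parametrized by the incidence $\{(L, S', p) : L \in G(1,n),\ S' \in L^{(r-1)},\ p \in \PP^n \setminus S'\}$, of dimension $2(n-1) + (r-1) + n = 3n + r - 3$.

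I do not foresee any serious obstacle at this stage. The substantive analysis, namely the inductive restriction argument through the exact sequence \eqref{eq: restriction exact sequence} and the careful handling of the Alexander--Hirschowitz exceptional cases, has already been absorbed into \Cref{lemma: induction for veronese terracini} and the three preceding propositions. What remains is simply to match hypotheses, to invoke \Cref{lemma: veronese points on subspace} on the correct linear subspace, and to carry out the parameter count. The irreducibility in each case is automatic, since every stratum is presented as the image of a dominant morphism from an irreducible incidence variety, so that no additional components can appear.
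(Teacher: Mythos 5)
Your proposal is correct and follows essentially the same route as the paper: each inclusion comes from the corresponding preceding proposition (\Cref{proposition: first nonempty terracini}, \Cref{proposition: second nonempty terracini for cubics}, \Cref{proposition: second nonempty terracini}) paired with \Cref{lemma: veronese points on subspace} for the reverse containment, with the same numerical checks $r>\frac{d+1}{2}$ and $4>\frac{10}{3}$. Your substitution of \Cref{propos: Ter are nested} for \Cref{lemma: veronese points on subspace} in the third bullet is only a cosmetic difference (the lemma is itself proved via that nesting), and your explicit incidence-variety dimension counts merely spell out what the paper leaves implicit.
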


\begin{proof}
Set $r=\ru{\frac{d+2}{2}}$ and let us consider the first item.  One one hand, since $r>\frac{d+1}{2}=\frac{1}{1+1}\binom{1+d}{d}$, we have $\nu_d(S)\in\Ter_r(\V_n^d)$ if $S$ is contained on a line by \cref{lemma: veronese points on subspace}.  On the other hand, \cref{proposition: first nonempty terracini} tells us that putting all points on a line is the only way for $\nu_d(S)$ to be in the Terracini locus.

Now suppose $d=3$ and $r=4$.  Since $4>\frac{10}{3}=\frac{1}{2+1}\binom{2+3}{3}$, we see that if all 4 points of $S$ are contained in a plane, then $\nu_3(S)\in\Ter_4(\V_n^3)$ by \cref{lemma: veronese points on subspace}.  
On the other hand, there are no other sets of points in the Terracini locus by \cref{proposition: second nonempty terracini for cubics}.  Note that this means that $\Ter_4(\V_2^3)=(\V_2^3)^{(4)}$, but this is expected by \cref{rmk: we assume subabundant and nondefective}.

Let us focus now on the third item and set $r=\ru{\frac{d+2}{2}}+1$. On one hand, since $r-1>\frac{1}{1+1}\binom{1+d}{d}$,  we have $\nu_d(S)\in\Ter_r(\V_n^d)$ if $r-1$ points of $S$ lie on a line by \cref{lemma: veronese points on subspace}.  On the other hand, \cref{proposition: second nonempty terracini} shows that the general element of $\Ter_r(\V_n^d)$ is of this form.
\end{proof}

Since we know the dimension of the first nonempty Terracini loci, \Cref{propos: Ter are nested} allows us to bound the dimension of higher Terracini loci of the Veronese variety.
\begin{corollary}
Let $n\ge 2$ and $d\ge 3$. Suppose that $r$ is an integer satisfying \eqref{eq: subabundant} such that $2r\ge d+2$. Then $$\dim\Ter_r(\V_n^d)\ge 2n+\ru{\frac{d+2}{2}}-2+n\left(r-\ru{\frac{d+2}{2}}\right).$$
In the special case of cubics, we have
$$\dim\Ter_r(\V_n^3)\geq 3n + 2 + n(r - 4)$$
for every $r\ge 4$ satisfying \cref{eq: subabundant}.
\end{corollary}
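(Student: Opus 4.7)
The plan is to apply \cref{propos: Ter are nested} iteratively, using \cref{propo: all we know about the Veronese} as the base case. Since $\dim \V_n^d = n$, each application of \cref{propos: Ter are nested} contributes an extra $n$ to the dimension bound, provided the relevant Terracini locus is nonempty and $r$ stays within the range prescribed by \eqref{eq: subabundant}.

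For the first bound, I would set $r_0 = \ru{\frac{d+2}{2}}$, so that the hypothesis $2r\ge d+2$ becomes $r\ge r_0$. By \cref{propo: all we know about the Veronese}, the locus $\Ter_{r_0}(\V_n^d)$ is nonempty of dimension $2n+r_0-2$. All intermediate integers $s\in\{r_0,\ldots,r\}$ satisfy \eqref{eq: subabundant} because $r_0\le s\le r$ and $r$ itself does. Iterating the inequality $\dim \Ter_{s+1}(\V_n^d) \ge \dim \Ter_s(\V_n^d) + n$ from $s=r_0$ up to $s=r-1$ (the nonemptiness propagates automatically along the chain, since the dimensions keep increasing) yields
\[
\dim \Ter_r(\V_n^d) \ge (2n + r_0 - 2) + n(r - r_0),
\]
which is exactly the first claim.

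The cubic case is handled in the same way but with a strictly better starting point: \cref{propo: all we know about the Veronese} gives $\dim \Ter_4(\V_n^3) = 3n + 2$, one more than the bound $3n+1$ one would obtain by starting the chain at $r_0 = 3$ (where $\dim\Ter_3(\V_n^3)=2n+1$) and iterating once. Starting from $r=4$ and applying \cref{propos: Ter are nested} $r-4$ times immediately produces $\dim \Ter_r(\V_n^3) \ge 3n + 2 + n(r-4)$ for every admissible $r\ge 4$.

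There is essentially no obstacle here: the statement is a direct consequence of the monotonicity from \cref{propos: Ter are nested} combined with the explicit descriptions of the first (and, for cubics, second) nonempty Terracini locus already established in \cref{propo: all we know about the Veronese}. The only book-keeping is verifying that \eqref{eq: subabundant} is preserved along the chain, which is immediate since $r_0\le r$ and $r$ itself is assumed to satisfy it.
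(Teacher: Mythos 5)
Your proposal is correct and matches the paper's (implicit) argument exactly: the paper derives this corollary by iterating \cref{propos: Ter are nested} starting from the dimensions of the first (and, for cubics, second) nonempty Terracini loci given in \cref{propo: all we know about the Veronese}. Your bookkeeping on condition \eqref{eq: subabundant} along the chain and the observation that starting at $r=4$ for cubics yields the sharper constant $3n+2$ are both as intended.
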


\section{Segre-Veronese varieties}\label{sec: SegreVeronese}
In this section we characterize the first nonempty Terracini loci of Segre-Veronese varieties. 
The behaviour just analyzed in the Veronese case gives us some intuition about the Terracini loci of Segre-Veronese varieties for the smallest possible values of $r$. We might guess that for $r$ double points to give dependent conditions, they must be in the very special position where all of them belong to the same ruling.  In \cref{theorem: first nonempty SV}, we will prove that this is indeed usually the case. 

Let $n_1,\ldots,n_k$ and $d_1,\ldots,d_k$ be positive integers. Let $N=\binom{n_1+d_1}{n_1}\cdot\ldots\cdot\binom{n_k+d_k}{n_k}-1$ and let 
$$\nu:\p^{n_1}\times\cdots\times\p^{n_k}\rightarrow X=\SV^{d_1,\ldots,d_k}_{n_1\times \cdots \times n_k}\subset\p^N$$ be the Segre-Veronese embedding.  For $i\in\{1,\ldots,k\}$, denote by $\pi_i:\p^{n_1}\times\cdots\times\p^{n_k}\rightarrow \PP^{n_i}$ the projection onto the $i$th factor. By \cite[Theorem 1.1]{LM23}, the smallest integer $r$ for which $\Ter_r(X)\neq \varnothing $ is $r=\min_i\ru{\frac{d_i+2}{2}}$.

\begin{remark}\label{remark: no segre subvarieties}
Just as in the beginning of \Cref{section: veronese surfaces}, we make some assumptions in order to avoid defective cases. Indeed, if there exist distinct $i,j$ for which $d_i=d_j=1$, then $X$ contains a copy of the Segre variety $\SV_{n_1\times n_2}^{1,1}$, which is $r$-defective for every $r$ satisfying \cref{eq: subabundant}, therefore \Cref{propos: Ter of a subvariety} implies that the Terracini locus is larger than the one we described in our guess.  For this reason we assume that $\#\{i\mid d_i=1\}\leq 1$, and so in particular we will not discuss the Segre varieties with multidegree $(1,\ldots,1)$.   We refer the interested reader to \cite{BBS} for first results on Terracini loci of Segre varieties.
\end{remark}
 Since it will come in handy later, let us recall the shape of the tangent space of a Segre-Veronese variety at a point $[p]\in \mathrm{SV}^{d_1,\dots,d_k}_{n_1\times \cdots \times n_k}$. We represent affinely $[p]\in \mathrm{SV}^{d_1,\dots,d_k}_{n_1\times \cdots \times n_k}$ as $p= L_1^{d_1}\cdots L_k^{d_k}$, where $L_i\in \mathbb{K}[x_{i,0},\ldots,x_{i,n_i}]_1$. Note that many authors would represent this as $p=L_1^{d_1}\otimes\cdots\otimes L_k^{d_k}$, but we omit the $\otimes$'s for brevity.  Then
\begin{align*}
    T_{[p]}\mathrm{SV}^{d_1,\dots,d_k}_{n_1\times \cdots \times n_k}= &\mathbb{P} \left( 
    \sum_{i=1}^k \prod_{j\neq i} L_j^{d_j}L_i^{d_i-1}\mathbb{K}[x_{i,0},\dots,x_{i,n_i}]_1\right) \\
    =& \mathbb{P} \left( L_1^{d_1 -1}L_2^{d_2}\cdots L_k^{d_k}  \mathbb{K}[x_{1,0},\dots,x_{1,n_1}]_1 + \cdots + L_1^{d_1}\cdots L_k^{d_{k-1}}L_k^{d_k-1}  \mathbb{K}[x_{k,0},\dots,x_{k,n_k}]_1  \right).
\end{align*}
For all $i\in\{1,\dots,k\}$, let $Y_{i,1},\ldots, Y_{i,n_i} \in \mathbb{K}[x_{i,0},\ldots,x_{i,n_i}]_1 $ be such that $\{ L_i, Y_{i,1},\ldots, Y_{i,n_i}\}$ is a basis of $\mathbb{K}[x_{i,0},\ldots,x_{i,n_i}]_1 $. A set of generators for $T_{[p]}\mathrm{SV}^{d_1,\dots,d_k}_{n_1\times \cdots \times n_k}$ is $A_1\cup \cdots \cup A_k $, where
\begin{align*}
   A_1=&\left\{L_1^{d_1-1}L_1L_2^{d_2}\cdots L_k^{d_k}  ,L_1^{d_1-1}Y_{1,1}L_2^{d_2}\cdots L_k^{d_k},\dots,  L_1^{d_1-1}Y_{1,n_1}L_2^{d_2}\cdots L_k^{d_k} \right\},\\
    & \vdots \\
    A_k=&\left\{ L_1^{d_1}\cdots L_{k-1}^{d_{k-1}}L_k^{d_k-1}L_k, L_1^{d_1}\cdots L_{k-1}^{d_{k-1}}L_k^{d_k-1}Y_{k,1}, \dots, L_1^{d_1}\cdots L_{k-1}^{d_{k-1}}L_k^{d_k-1}Y_{k,n_k}\right\}.
\end{align*}
Notice that $A_1\cup \cdots \cup A_k $ is not linearly independent, because every $A_i$ contains $L_1^{d_1}\cdots L_k^{d_k} $. If we remove it from $A_2,\dots,A_k$, then the remaining elements of $A_1\cup \cdots \cup A_k$ would form a basis of the affine tangent space.

The following analog of \cref{lemma: induction for veronese terracini} to Segre-Veronese varieties will be useful for several results in this section.  At the cost of having a much smaller upper bound on the number of points lying in the chosen hyperplane
, it has the advantage of not requiring these points to be in general position.

\begin{lemma}\label{lemma: induction for segre-veronese terracini}
Suppose that $d_i\geq 3$ for some $i\in\{1,\ldots,k\}$ and $H'\subset\PP^{n_i}$ is a hyperplane.  Let $H=\pi_i^{-1}(H')$ and suppose $S\subset\PP^{n_1}\times\cdots\times\PP^{n_k}$ is a set of $r$ points such that $S\cap H$ consists of $s$ points with $s\leq\ru{\frac{d_j}{2}}$ for all $j$.  If $\nu(S\setminus H)\notin\Ter_{r-s}(\SV_{n_1\times\cdots\times n_k}^{d_1,\ldots,d_i-2,\ldots,d_k})$, then $\nu(S)\notin\Ter_r(\SV_{n_1\times\cdots\times n_k}^{d_1,\ldots,d_k})$.
\end{lemma}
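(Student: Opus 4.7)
The plan is to mimic the proof of the Veronese analog \cref{lemma: induction for veronese terracini} by applying the restriction exact sequence twice with respect to the hyperplane $H=\pi_i^{-1}(H')$. Note that $H$ has multidegree $(0,\dots,1,\dots,0)$ with the $1$ in position $i$, so twisting by $\mathcal{O}_X(-H)$ lowers the $i$-th degree by one. Moreover $H\cong\PP^{n_1}\times\cdots\times\PP^{n_i-1}\times\cdots\times\PP^{n_k}$, and the restriction of $\mathcal{O}(d_1,\dots,d_k)$ to $H$ realizes it as a smaller Segre-Veronese variety. Applying the restriction sequence to $Z=2S$ yields
\begin{equation*}
0\to I_{\Res_H(2S)}(d_1,\dots,d_i-1,\dots,d_k)\to I_{2S}(d_1,\dots,d_k)\to I_{2S\cap H,H}(d_1,\dots,d_k)\to 0.
\end{equation*}
On the right-hand side, $2S\cap H$ is a scheme of $s$ double points supported on $S\cap H$ inside $H$. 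The hypothesis $s\le\ru{d_j/2}$ for every $j$ is equivalent to $s<\ru{(d_j+2)/2}$ for every $j$, so by \cite[Theorem 1.1]{LM23} the $s$-Terracini locus of this smaller Segre-Veronese is empty; consequently $h^1(I_{2S\cap H,H}(d_1,\dots,d_k))=0$.

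Next, one computes $\Res_H(2S)=2(S\setminus H)\cup(S\cap H)$, and a second application of the restriction sequence gives
\begin{equation*}
0\to I_{2(S\setminus H)}(d_1,\dots,d_i-2,\dots,d_k)\to I_{\Res_H(2S)}(d_1,\dots,d_i-1,\dots,d_k)\to I_{S\cap H,H}(d_1,\dots,d_i-1,\dots,d_k)\to 0.
\end{equation*}
The hypothesis $\nu(S\setminus H)\notin\Ter_{r-s}(\SV^{d_1,\dots,d_i-2,\dots,d_k}_{n_1\times\cdots\times n_k})$ provides the vanishing of $h^1$ of the left-hand term. Chasing through both long exact sequences, the desired $h^1(I_{2S}(d_1,\dots,d_k))=0$ (equivalent, via the structure sequence of $2S$, to $\nu(S)\notin\Ter_r(\SV^{d_1,\dots,d_k}_{n_1\times\cdots\times n_k})$) reduces to proving $h^1(I_{S\cap H,H}(d_1,\dots,d_i-1,\dots,d_k))=0$.

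The main obstacle is precisely this last vanishing: unlike the Veronese case, we have no general-position assumption on $S\cap H$ and cannot appeal directly to Alexander-Hirschowitz. My plan is to separate the $s$ simple points by hand. Fix a target $p\in S\cap H$; for each of the remaining $s-1$ points $q$, pick a factor $j_q$ where $q$ and $p$ have distinct projections, together with a hyperplane in $\PP^{n_{j_q}}$ through the projection of $q$ avoiding that of $p$. The pullback product of these $s-1$ hyperplanes vanishes at every such $q$ but not at $p$, and has multidegree with $j$-th entry $\#\{q:j_q=j\}\le s-1$. The bound $s\le\ru{d_j/2}$ forces $s-1\le d_j$ for $j\ne i$ and, using $d_i\ge 3$, also $s-1\le d_i-1$, so even the worst-case concentration of all $s-1$ items into a single factor fits within multidegree $(d_1,\dots,d_i-1,\dots,d_k)$. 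This produces the required separating section, establishing the vanishing and hence the lemma.
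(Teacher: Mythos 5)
Your proposal is correct and follows essentially the same route as the paper: two applications of the restriction sequence, vanishing of the double-point term on $H$ via \cite[Theorem 1.1]{LM23}, and a hand-built separating divisor for the simple points $S\cap H$ (the paper uses a general multidegree-$(1,\dots,1)$ divisor through each $q\neq p$ plus a padding divisor $E$, while you concentrate each separation into a single well-chosen factor; both constructions fit inside multidegree $\mathbf d-\mathbf e_i$ thanks to the bound $s\leq\ru{d_j/2}$ and $d_i\geq 3$).
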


\begin{proof}
We follow the proof of \cref{lemma: induction for veronese terracini}.  To simplify notation, let $\PP^{\mathbf n}=\PP^{n_1}\times\cdots\times\PP^{n_k}$, $\mathbf d = (d_1,\ldots,d_k)$,  $\mathbf e_i=(0,\ldots,0,1,0,\ldots,0)$ with the 1 in the $i$th position, and $\mathbf 1 = (1,\ldots,1)$.

Applying \cref{eq: restriction exact sequence} and taking cohomology, we have the following exact sequence of vector spaces
\begin{equation*}
H^1(I_{\Res_H(2S),\PP^{\mathbf n}}(\mathbf d-\mathbf e_i))\longrightarrow H^1(I_{2S,\PP^{\mathbf n}}(\mathbf d))\longrightarrow H^1(I_{2S\cap H,H}(\mathbf d)).
\end{equation*}

The vector space on the right-hand side is zero due to the upper bound on $s$ by \cite[Theorem 1.1]{LM23}, and so it remains to consider the left-hand side.  Applying \cref{eq: restriction exact sequence} again, we have
\begin{equation*}
H^1(I_{2(S\setminus H),\PP^{\mathbf n}}(\mathbf d-2\mathbf e_i))\longrightarrow H^1(I_{\Res_H(2S),\PP^{\mathbf n}}(\mathbf d-\mathbf e_i))\longrightarrow H^1(I_{S\cap H,H}(\mathbf d-\mathbf e_i)).
\end{equation*}

The left-hand side is zero by hypothesis, and so it remains to consider the right-hand side, the first cohomology space of a scheme of $s$ simple points on $H$.  To show that it is zero, it is sufficient to prove that for every $p\in S\cap H$ there exists a divisor in $H^0(\oo_{H}(\mathbf d - \mathbf e_i))$ that contains every point of $S\cap H$ except $p$.  Indeed, choose $p\in S\cap H$ and pick a general divisor $D_p\in H^0(\oo_H(\mathbf 1))$ through $p$. Let $E\in H^0(\oo_H(\mathbf d - \mathbf e_i + (1 - s)\mathbf 1))$ be a general divisor, noting that the elements of the multidegree of $E$ are all positive due to our bound on $s$.  Finally, $S\cap H\setminus\{p\}\in\sum_{q\in S\cap H\setminus\{p\}}D_q + E$, as desired.
\end{proof}

\begin{definition}\label{def: r and J for SV}
In the rest of this section, given a multidegree $(d_1,\ldots,d_k)$, we will always assume that $r$ is the smallest value for which the $r$-Terracini locus of the Segre-Veronese variety is nonempty by \cite[Theorem 1.1]{LM23}, that is
\begin{equation*}
    r = \min\left\{\ru{\frac{d_i+2}{2}}\middle| i\in\{1,\ldots,k\}\right\}.
\end{equation*}

We will frequently isolate the indices of the $d_i$'s that determine $r$, and so we define
\begin{equation*}
    J = \left\{i\in\{1,\ldots,k\}\middle|\ru{\frac{d_i+2}{2}}= r\right\}.
\end{equation*}
\end{definition}
This set $J$ will be relevant for all the results of this part. All factors of the multiprojective space indexed by elements of $J$ will provide a component of the first nonempty Terracini locus. As we will see, 
such a component will be formed by all $r$ points of the multiprojective space whose projection on the $i$th factor are collinear and whose projection on the other factors is just a point. First, we prove that points with this structure belong to the first nonempty Terracini locus.

\begin{proposition}\label{pro: SV has nonempty terracini}
Let $d_1,\ldots,d_k$ be positive integers satisfying \cref{remark: no segre subvarieties}. Let $X=\SV^{d_1,\ldots,d_k}_{n_1\times \cdots \times n_k}$ and let
$S\subset \p^{n_1}\times\cdots\times\p^{n_k}$ be a set of $r$ points. Assume that there exists $i\in J$ such that {$\pi_i(S)$} is collinear and $\pi_\ell(S)$ is a point for every $\ell\neq i$. Then $\nu(S)\in\Ter_r(X)$.
\end{proposition}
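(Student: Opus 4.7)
The plan is to realize $S$ as sitting inside a rational normal curve of $X$, on which the numerical setup forces $r$ double points to fail independence automatically, and then invoke \cref{propos: Ter of a subvariety}.

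First, I would fix the "coordinates" of $S$ in the non-$i$ directions and the ambient line in the $i$-th direction. Set $q_\ell := \pi_\ell(S)$ for every $\ell\neq i$, which is a single point of $\p^{n_\ell}$ by hypothesis, and let $L\subset\p^{n_i}$ be a line with $\pi_i(S)\subset L$. Then $S$ is contained in the subvariety
\[
Y \;=\; \{q_1\}\times\cdots\times\{q_{i-1}\}\times L\times \{q_{i+1}\}\times\cdots\times\{q_k\} \;\subset\; \p^{n_1}\times\cdots\times\p^{n_k},
\]
which is isomorphic to $\p^1$. Since all factors except the $i$-th are constant on $Y$, the restriction $\nu|_Y$ is (up to a linear change of coordinates) the Veronese embedding $\nu_{d_i}\colon\p^1\to\p^{d_i}$. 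Hence $\nu(Y)$ is an irreducible rational normal curve of degree $d_i$, nondegenerate in its linear span $\langle\nu(Y)\rangle\cong\p^{d_i}$.

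Next, I would check that the defining inequality of $\Ter_r(\nu(Y))$ is vacuous inside $\langle\nu(Y)\rangle$. A quick parity check shows
\[
r \;=\; \left\lceil\tfrac{d_i+2}{2}\right\rceil \;>\; \tfrac{d_i+1}{2} \;=\; \tfrac{\dim\langle\nu(Y)\rangle+1}{\dim\nu(Y)+1},
\]
so by \cref{rmk: we assume subabundant and nondefective} applied to the curve $\nu(Y)\subset\langle\nu(Y)\rangle$ we have $\Ter_r(\nu(Y)) = \nu(Y)^{(r)}$. Since $\nu(S)\subset\nu(Y)$ is a set of $r$ distinct smooth points, we conclude $\nu(S)\in\Ter_r(\nu(Y))$.

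Finally, the value of $r$ satisfies $2\le r\le \frac{N+1}{\dim X+1}$ by the standing assumption that $r$ is the first index for which $\Ter_r(X)\neq\varnothing$ (otherwise $\Ter_r(X)=X^{(r)}$ trivially and there is nothing to prove). Consequently \cref{propos: Ter of a subvariety} yields $\Ter_r(\nu(Y))\subseteq\Ter_r(X)$, and therefore $\nu(S)\in\Ter_r(X)$. The only potential subtlety is verifying the parity computation and the numerical condition \eqref{eq: subabundant} for $X$, but both are immediate, so the argument is essentially bookkeeping once one spots the subvariety $Y$.
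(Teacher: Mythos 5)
Your proof is correct, and it takes a genuinely different (and more uniform) route than the paper. The paper splits into two cases: for $n_i\ge 2$ it restricts to the Veronese subvariety $\nu(\p^{n_i}\times\{q_2\}\times\cdots\times\{q_k\})$ and invokes the full characterization of \cref{propo: all we know about the Veronese} before applying \cref{propos: Ter of a subvariety}; for $n_i=1$ it declares that \cref{propos: Ter of a subvariety} is unusable ``because the rational normal curve has empty Terracini loci'' and instead writes down explicit affine tangent vectors, concluding by counting $2r\ge d_i+2>\dim\kk[x_{i,0},x_{i,1}]_{d_i}$. You avoid both by restricting all the way down to the line $L$, so that $\nu(Y)$ is a rational normal curve of degree $d_i$ in its span $\p^{d_i}$, where $2r\ge d_i+2>d_i+1$ makes the Terracini condition vacuous by \cref{rmk: we assume subabundant and nondefective}; this is exactly the mechanism of the paper's own \cref{lemma: veronese points on subspace} with $m=1$, transplanted to the Segre--Veronese setting. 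Your reading is the right one: the emptiness in \cref{corol: rational and elliptic normal curves} only covers $r\le\rd{\frac{d_i+1}{2}}$, i.e.\ the subabundant range, whereas here $r=\ru{\frac{d_i+2}{2}}$ lies just outside it, so $\Ter_r(\nu(Y))=\nu(Y)^{(r)}$ and the proof of \cref{propos: Ter of a subvariety} (which explicitly allows $r>\frac{\dim\langle Y\rangle+1}{\dim Y+1}$) goes through. Your closing remark disposing of the degenerate case $r>\frac{N+1}{\dim X+1}$, where $\Ter_r(X)=X^{(r)}$ trivially, is also the right way to justify the numerical hypothesis of \cref{propos: Ter of a subvariety}. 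What your approach buys is a single two-line argument replacing the paper's case analysis and explicit tangent-space computation; what the paper's computation buys is an independent, self-contained verification that does not lean on the convention of \cref{rmk: we assume subabundant and nondefective}.
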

\begin{proof}
By permuting the factors if necessary, we assume that $i=1$. For every $\ell\in\{2,\ldots,k\}$, let $q_\ell\in\p^{n_\ell}$ be the point $\pi_\ell(S)$. Assume first that $n_1\geq 2$ and let $$Y=\nu(\p^{n_1}\times \{ q_2\}\times \cdots \times \{q_k\}) \subset X.$$
Then $\nu_{|Y}$ is the $d_1$-Veronese embedding of $\p^{n_1}$. We want to show that $\nu(\pi_1(S)\times\{ q_2\}\times \cdots \times \{q_k\})\in\Ter_r(Y)$. If $d_1=2$, then this is true because $r=2$ and $\V_{n_1}^2$ is 2-defective by \cite{AH}, therefore $\Ter_2(Y)=Y^{(2)}$. If $d_1\ge 3$, it is true by \Cref{propo: all we know about the Veronese}. Therefore $\nu(\pi_1(S)\times\{ q_2\}\times \cdots \times \{q_k\})\in\Ter_r(X)$ by \Cref{propos: Ter of a subvariety}.

Now assume that $n_1=1$. We can no longer use \Cref{propos: Ter of a subvariety} because the rational normal curve $\V_1^{d_1}$ has empty Terracini loci, as seen in \Cref{corol: rational and elliptic normal curves}. In this case we will prove the result by looking at the span of the affine tangent spaces associated to the points of $\nu(S)$. Take two distinct linear forms $[M_0],[M_1]\in \mathbb{K}[x_{1,0},x_{1,1}]$. For all $\ell\in\{2,\ldots,k\}$ let $[L_\ell]\in \mathbb{K}[x_{\ell,0},\dots,x_{\ell,n_{\ell}}]_1 $. We represent affinely the $r$ points of $\nu(S)$ as
\[\begin{tabular}{cc}
$p_\alpha=M_0^{d_1}L_2^{d_2}\cdots L_k^{d_k}$, &
$p_\beta=M_1^{d_1}L_2^{d_2}\cdots L_k^{d_k}$
\end{tabular} \]
and $p_s=(\alpha_sM_0+\beta_sM_1)^{d_1}L_2^{d_2}\cdots L_k^{d_k}$
for each  $s\in\{3,\dots,r\}$. Then
\begin{align*}
    A_{1}=\left\{   M_0^{d_1}L_2^{d_2}\cdots L_k^{d_k},M_0^{d_1-1}M_1L_2^{d_2}\cdots L_k^{d_k}  \right\}
    \end{align*}
is a set of two independent tangent vectors to $X$ at $[p_\alpha]$, so we can complete $A_1$ to an affine basis $A_{\alpha}$ of $T_{[p_\alpha]}X$. In a similar way, we can complete 
\[B_{1}=\left\{   M_1^{d_1}L_2^{d_2}\cdots L_k^{d_k},M_1^{d_1-1}M_0L_2^{d_2}\cdots L_k^{d_k}  \right\}
\]
to a basis $B_{\beta}$ of the affine tangent space to $X$ at $[p_\beta]$. 
Now we do the same for $p_s$ for each $s\in\{3,\dots,r\}$. If we call
     \begin{align*}
    C_{s,1}=&\left\{   (\alpha_sM_0+\beta_s M_1)^{d_1-1}M_0L_2^{d_2}\cdots L_k^{d_k},(\alpha_sM_0+\beta_s M_1)^{d_1-1}M_1L_2^{d_2}\cdots L_k^{d_k}  \right\}
    \end{align*}
then we can complete $C_{s,1}$ to a basis $C_s$ of the affine tangent space to $X$ at $[p_{s}]$.
In order to prove that $\nu(S)\in\Ter_r(X)$, we have to show that the set $A_{\alpha}\cup B_{\beta}\cup C_3\cup \cdots \cup C_r $ given by the union of all bases is linearly dependent. It is enough to show that $A_1\cup B_1\cup C_{3,1}\cup\cdots\cup C_{r,1}$ is linearly dependent. Observe that the $2r$ elements of $A_1\cup B_1\cup C_{3,1}\cup\cdots\cup C_{r,1}$ are of the form $f\cdot L_2^{d_2}\cdots L_k^{d_k}$ for some $f\in\kk[x_{1,0},x_{1,1}]_{d_1}$. Since $2r\ge d_1+2>\dim \kk[x_{1,0},x_{1,1}]_{d_1}$, they are linearly dependent.
\end{proof}

We want to prove the converse of \Cref{pro: SV has nonempty terracini} by induction on $r$. Notice that the case  $r=2$, or equivalently $\min\{d_1,\dots,d_k\}\in\{1,2\}$, is different from the other ones, because  two points in $\p^{n_i}$ are always collinear. In other words, if $r=2$ then the first part of the hypothesis of \Cref{pro: SV has nonempty terracini} is always satisfied. For this reason, we treat this case separately in \Cref{prop: two points SV}.  Note that this is the only case where we need to worry about \cref{remark: no segre subvarieties}.

\begin{proposition}\label{prop: two points SV}
Let $d_1,\ldots,d_k$ be positive integers satisfying \cref{remark: no segre subvarieties} for which $r=2$. Let $X=\SV^{d_1,\ldots,d_k}_{n_1\times \cdots \times n_k}$ and let $S\subset \PP^{n_1}\times \cdots \times \PP^{n_k}$ be a set of two points.
Then $\nu(S)\in \Ter_2(X)$ 
if and only if there exists $i\in J$ such that $\#\pi_\ell(S)=1$ for all $\ell\in\{1,\dots,k\}\setminus\{i\}$.
\end{proposition}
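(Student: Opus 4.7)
The ``if'' direction follows directly from \Cref{pro: SV has nonempty terracini} applied with $r=2$: the set $\pi_i(S)$ consists of at most two points and is thus automatically collinear, while $\pi_\ell(S)$ is a single point for $\ell\neq i$. For the converse I argue the contrapositive. Write $S=\{p,q\}$ and set $D=\{\ell\mid\pi_\ell(p)\neq\pi_\ell(q)\}$, which is nonempty since $p\neq q$. The negation of the conclusion reads: either (i) $|D|\geq 2$, or (ii) $D=\{i_0\}$ with $d_{i_0}\geq 3$. I will show $\nu(S)\notin\Ter_2(X)$ in both cases by picking bases adapted to $p$ and $q$ and verifying that the generators of the two affine tangent spaces are pairwise distinct basis monomials of the ambient vector space $\Sym^{d_1}V_1\otimes\cdots\otimes\Sym^{d_k}V_k$, where $V_i=\kk[x_{i,0},\ldots,x_{i,n_i}]_1$.

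For $i\in D$ pick independent linear forms $L_i,M_i\in V_i$ representing $\pi_i(p),\pi_i(q)$ and extend to a basis $\{L_i,M_i,Y_{i,2},\ldots,Y_{i,n_i}\}$, setting $Y_{i,1}:=M_i$; for $i\notin D$, let $L_i=M_i$ and extend $\{L_i\}$ to a basis $\{L_i,Y_{i,1},\ldots,Y_{i,n_i}\}$. By the explicit description at the start of this section, $T_{\nu(p)}X$ is spanned by the distinct basis monomials $\nu(p)=\prod_iL_i^{d_i}$ and $\left(\prod_{i'\neq i}L_{i'}^{d_{i'}}\right)L_i^{d_i-1}Y_{i,j}$ for $i\in\{1,\ldots,k\}$, $j\in\{1,\ldots,n_i\}$. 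Analogously $T_{\nu(q)}X$ has a basis of monomials obtained by replacing every $L_{i'}$ with $M_{i'}$ and setting $Z_{i,1}:=L_i$ for $i\in D$ (and $Z_{i,j}=Y_{i,j}$ otherwise). It suffices to prove that no monomial appears in both bases.

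Let $\alpha,\beta$ be basis monomials of $T_{\nu(p)}X$ and $T_{\nu(q)}X$ with ``distinguished'' indices $a,b\in\{0,1,\ldots,k\}$ respectively (the value $0$ standing for $\alpha=\nu(p)$ or $\beta=\nu(q)$). For every $i\in D$ the factor $\alpha_i\in\Sym^{d_i}V_i$ has $L_i$-degree $\geq d_i-1$ (since either $\alpha_i=L_i^{d_i}$ or $\alpha_i=L_i^{d_i-1}Y_{i,j}$ with $Y_{i,j}\neq L_i$), while $\beta_i$ has $L_i$-degree $\leq 1$ (it is $M_i^{d_i}$ or $M_i^{d_i-1}Z_{i,j}$, and $M_i$ is independent from $L_i$). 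Matching at factor $i\in D$ thus forces $d_i\leq 2$, which immediately rules out case (ii). In case (i), every $i\in D$ satisfies $d_i\leq 2$; a closer inspection shows that a match at $i\in D$ with $d_i=2$ requires $\alpha_i=L_iM_i=\beta_i$, with $i$ distinguished in both $\alpha$ and $\beta$, while a match at $i\in D$ with $d_i=1$ requires $i$ distinguished in at least one of $\alpha,\beta$. Either way $i\in\{a,b\}$, so $D\subseteq\{a,b\}$. Since $|D|\geq 2$ we have $D=\{a,b\}$ with $a\neq b$, ruling out the $d_i=2$ possibility at both $a$ and $b$ (which would require $a=b$); hence $d_a=d_b=1$, contradicting \Cref{remark: no segre subvarieties}.

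Consequently the $2(1+\sum_i n_i)$ monomials forming the union of the two bases are linearly independent, so $T_{\nu(p)}X+T_{\nu(q)}X$ has the expected maximal affine dimension, the projectivized tangent spaces are disjoint, and $\nu(S)\notin\Ter_2(X)$. The main obstacle is the monomial-matching case analysis above; the input from \Cref{remark: no segre subvarieties} is what ultimately forces the contradiction in case (i), showing that the $d_a=d_b=1$ configuration cannot arise.
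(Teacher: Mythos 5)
Your proof is correct and follows essentially the same route as the paper's: both write down explicit monomial generating sets for the two affine tangent spaces with respect to bases of each $V_i$ adapted to the pair of points, and reduce membership in $\Ter_2(X)$ to the existence of a common monomial, which a degree count in the factors where the points differ rules out unless only one factor differs and its degree is at most $2$ (the would-be Segre configuration $d_a=d_b=1$ being excluded by \cref{remark: no segre subvarieties}). If anything, your bookkeeping via the set $D$ and the distinguished indices makes the joint linear independence of the union of the two bases more transparent than the paper's factor-by-factor assertion.
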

\begin{proof}
Represent affinely the two points of $\nu(S)$  as  
$$p_1=L_1^{d_1}\cdots L_k^{d_k}\mbox{ and } p_2=M_1^{d_1}\cdots M_k^{d_k},$$
where $[L_\ell],[M_\ell]\in \mathbb{K}[x_{\ell,0},\dots,x_{\ell,n_{\ell}}]_1$ for all $\ell\in\{1,\dots,k\}$. Let $i$ be an index of a factor with $\pi_i([p_1])\neq\pi_i([p_2])$ for which $d_i$ is minimal.
Since $\#\pi_i(S)=2$ we know that $L_i$ and $M_i$ are linearly independent forms. Let $Y_{i,2},\dots,Y_{i,n_i}\in \kk[x_{i,0},\dots ,x_{i,n_i}]_1$ be such that $\{L_i,M_i, Y_{i,2},\dots,Y_{i,n_i}\}$ is a basis of $ \kk[x_{i,0},\dots ,x_{i,n_i}]_1$. We call $A_1\cup \cdots \cup A_k$  an affine basis for $T_{[p_1]}X$ and $B_1\cup \cdots \cup B_k$ an affine basis for $T_{[p_2]}X$ constructed as explained at the beginning of this section. We first notice that for all $\ell\in \{ 1,\dots,k\} \setminus \{ i\}$ every element of $A_\ell$ is multiplied by $L_i^{d_i}$ and any element of $B_\ell$ is multiplied by $M_i^{d_i}$, where $[L_i]\neq [M_i]$. Therefore, the elements of $A_\ell$ and $B_\ell$ are linearly independent for all $\ell\in \{ 1,\dots,k\} \setminus \{ i\}$. On the $i$th factor we have the following independent generators for $p_1$ and $p_2$ respectively: 
    \begin{align}\label{eq: 2points tg spaces ith factor}
        A_i=\left\{ \prod_{\ell=1}^k L_\ell^{d_\ell},   L_i^{d_i-1}M_i\prod_{\ell\neq i}L_\ell^{d_\ell}, L_i^{d_i-1}Y_{i,e}\prod_{\ell\neq i}L_\ell^{d_\ell} 
        \right\}_{e=2,\dots,n_i},\\ \nonumber
       B_i=\left\{ \prod_{\ell=1}^k M_\ell^{d_\ell},   M_i^{d_i-1}L_i\prod_{\ell\neq i}M_\ell^{d_\ell}, M_i^{d_i-1}Y_{i,e}\prod_{\ell\neq i}M_\ell^{d_\ell} 
        \right\}_{e=2,\dots,n_i}.
    \end{align}
Assume for the moment $i\notin J$. In this case we need to prove that $\nu(S)\notin\Ter_2(X)$.  Since $i\notin J$, we have $d_\ell\geq 3$ for all $\ell$ and therefore also $A_i$ and $B_i$ are linearly independent.

Assume now $i\in J$. In this case $\nu(S)\in\Ter_2(X)$ if and only if $\#\pi_\ell(S)=1$ for all $\ell\in\{1,\dots,k\}\setminus\{i\}$. Thanks to  \Cref{pro: SV has nonempty terracini}, we only have to prove that if $ \nu(S)\in \Ter_2(X)$ then $\#\pi_\ell(S)=1$ for all $\ell\neq i$. Let us focus on the second element of $A_i$ and $B_i$ in \eqref{eq: 2points tg spaces ith factor}. Since $i\in J$ the degree $d_i\in \{1,2 \}$ but the second element of each set $A_i,B_i$ are linearly independent if there exists at least one $[M_\ell]\neq [L_\ell]$. {Therefore, if $\#\pi_\ell(S)=2$ for some $\ell \neq i$ then $A_i$ and $B_i$ are independent and hence $\nu(S)\notin \Ter_2(X)$.} Note that possibly $A_l\cap B_l\neq \emptyset$ if there exists $\ell\neq i$ for which $d_\ell=1$, however, this contradicts the hypothesis from \cref{remark: no segre subvarieties}.
\end{proof}

Now that the case $r=2$ is settled, we consider $r\ge 3$, or equivalently, $d_i\geq 3$ for all $i\in\{1,\ldots,k\}$. The next result proves that if a set of $r$ points $S\subset \PP^{n_1}\times \cdots \times \PP^{n_k}$ is not collinear when projected on one of the factors indexed by an element of $J$, then its image via the Segre-Veronese embedding is not an element of the $r$-Terracini locus.

\begin{lemma}\label{lemma: if not collinear before j hat then not in the terracini}
Let $X=\SV^{d_1,\ldots,d_k}_{n_1\times \cdots \times n_k}$ and let $S\subset \PP^{n_1}\times\cdots \times \PP^{n_k}$ be a set of $r\geq 3$ points. If there exists an $i\in J$ such that $\#\pi_\ell(S)=1$ for all $\ell\neq i$ but $\pi_i(S) $ is not collinear, then $\nu(S)\notin \Ter_r(X)$.
\end{lemma}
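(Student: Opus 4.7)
The plan is to choose convenient coordinates in each factor and decompose the span of the affine tangent spaces according to a natural grading, reducing the dimension count to the Veronese case. Write $\pi_i(p_s)=[L_s]\in\PP^{n_i}$ for $s=1,\ldots,r$, and for each $\ell\neq i$ let $\pi_\ell(S)=\{[M_\ell]\}$ and complete $M_\ell$ to a basis $\{M_\ell,y_{\ell,1},\ldots,y_{\ell,n_\ell}\}$ of $\kk[x_{\ell,0},\ldots,x_{\ell,n_\ell}]_1$. Endow the multidegree-$(d_1,\ldots,d_k)$ component of the multihomogeneous coordinate ring with the $\ZZ^{k-1}$-grading by total $y_\ell$-degree in each factor $\ell\neq i$, and write $W_{\mathbf a}$ for the sector of $y$-multidegree $\mathbf a=(a_\ell)_{\ell\neq i}$.

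Using the explicit generators of $T_{\nu(p_s)}X$ recalled just before \Cref{lemma: induction for segre-veronese terracini}, each tangent space admits a basis of homogeneous vectors: the $n_i+1$ generators coming from the $i$th factor lie in the zero sector $W_0$, and for every $\ell\neq i$ the $n_\ell$ remaining generators involving $y_{\ell,1},\ldots,y_{\ell,n_\ell}$ lie in $W_{e_\ell}$. The redundant vector $L_s^{d_i}\otimes\bigotimes_\ell M_\ell^{d_\ell}$ which appears in several of the $A_\ell$ is entirely contained in $W_0$, so after its removal the resulting basis is genuinely homogeneous and no contributions leak into sectors $W_{\mathbf a}$ with $|\mathbf a|\geq 2$. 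Hence $V=\sum_{s=1}^r T_{\nu(p_s)}X$ is a graded subspace of $W_0\oplus\bigoplus_{\ell\neq i}W_{e_\ell}$ and
\[\dim V=\dim(V\cap W_0)+\sum_{\ell\neq i}\dim(V\cap W_{e_\ell}).\]

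A direct reading of the basis gives $V\cap W_0=T^{(i)}\otimes\bigotimes_{\ell\neq i}M_\ell^{d_\ell}$, where $T^{(i)}:=\sum_{s=1}^r T_{\nu_{d_i}([L_s])}\V_{n_i}^{d_i}$ is the affine span of the Veronese tangent spaces at the points $\nu_{d_i}(\pi_i(S))$. Since $r\geq 3$ forces $d_j\geq 3$ for all $j$, since non-collinearity forces $n_i\geq 2$, and since $r=\lceil(d_i+2)/2\rceil$, the hypothesis that $\pi_i(S)$ is not collinear combined with \Cref{propo: all we know about the Veronese} gives $\nu_{d_i}(\pi_i(S))\notin\Ter_r(\V_{n_i}^{d_i})$, so $\dim T^{(i)}=r(n_i+1)$. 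For each $\ell\neq i$, $V\cap W_{e_\ell}=\langle L_1^{d_i},\ldots,L_r^{d_i}\rangle\otimes M_\ell^{d_\ell-1}\langle y_{\ell,1},\ldots,y_{\ell,n_\ell}\rangle\otimes\bigotimes_{j\neq i,\ell}\langle M_j^{d_j}\rangle$, of dimension $r\cdot n_\ell$ because the inequality $r\leq d_i+1$ guarantees that $r$ distinct points of $\PP^{n_i}$ impose independent conditions on degree-$d_i$ forms. Summing,
\[\dim V=r(n_i+1)+r\sum_{\ell\neq i}n_\ell=r(1+\dim X),\]
the expected value, so $\nu(S)\notin\Ter_r(X)$.

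The main obstacle is identifying the right grading on the multihomogeneous coordinate ring so that the tangent space generators become homogeneous; once this is done, the argument collapses cleanly onto the Veronese Terracini locus described in \Cref{propo: all we know about the Veronese} plus the straightforward observation that $r\leq d_i+1$ distinct points of $\PP^{n_i}$ impose independent conditions on degree-$d_i$ forms.
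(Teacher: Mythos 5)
Your proof is correct, but it takes a genuinely different route from the paper's. The paper argues by induction on $r$: the base case $r=3$ is done by hand, exhibiting explicit bases of the three affine tangent spaces and checking their union is independent, and the inductive step peels off one point at a time using the residual exact sequence machinery of \cref{lemma: induction for segre-veronese terracini} (dropping $d_i$ to $d_i-2$). You instead give a direct, one-shot argument: after fixing the common point $M_\ell$ in each factor $\ell\neq i$, the $\ZZ^{k-1}$-grading by total $y_\ell$-degree makes every tangent space a graded subspace concentrated in the sectors $W_0$ and $W_{e_\ell}$, so the span decomposes and its dimension is computed sector by sector. The $W_0$ sector is exactly the span of the tangent spaces to $\V_{n_i}^{d_i}$ at the $r$ distinct points $\pi_i(S)$, which has full dimension $r(n_i+1)$ by \cref{proposition: first nonempty terracini} (the precise statement you need; \cref{propo: all we know about the Veronese} is its packaged form), and each $W_{e_\ell}$ sector has dimension $rn_\ell$ because $r\le d_i+1$ distinct $d_i$-th powers of linear forms are independent. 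All the numerical hypotheses you invoke check out ($i\in J$ gives $r=\ru{\frac{d_i+2}{2}}$, $r\ge3$ gives $d_i\ge3$, non-collinearity gives $n_i\ge2$, and injectivity of $\pi_i|_S$ follows from $\#\pi_\ell(S)=1$ for $\ell\neq i$). What your approach buys is the elimination of both the induction and the explicit $r=3$ computation, and it isolates transparently why only the $i$th factor matters; what the paper's approach buys is uniformity, since the same residual-sequence engine drives all the other lemmas of that section.
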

\begin{proof}
Up to permuting the factors, it is not restrictive to prove the result for $i=1$. Notice that that our assumption implies that $n_1\ge 2$. We argue by induction on $r$. Assume that  $r=3$ and choose coordinates such that
\[\begin{matrix}
p_1=J_0^{d_1}L_2^{d_2}\cdots L_k^{d_k}, & & p_2=J_1^{d_1}M_2^{d_2}\cdots M_k^{d_k}, &  & p_3=J_2^{d_1}P_2^{d_2}\cdots P_k^{d_k},
\end{matrix}
    \]
for some linear forms $L_i,M_i,P_i\in\kk[x_{i,0},\dots,x_{i,n_i}]_1$. Choose $Y_{1,1},\dots,Y_{1,n_1}\in \kk[x_{1,0},\dots,x_{1,n_1}]_1$ such that $\{ J_i, Y_{1,1},\dots,Y_{1,n_1}\}$ is a basis of $\kk[x_{1,0},\dots,x_{1,n_1}]_1 $ for all  $i\in\{1,2,3\}$. For every $s\in\{2,\dots,k\}$, choose linear forms $Y_{s,1},\ldots,Y_{s,n_s}\in\kk[x_{s,0},\dots,x_{s,n_s}]_1$ such that $\{L_s,Y_{s,1},\ldots,Y_{s,n_s}\}$, $\{M_s,Y_{s,1},\ldots,Y_{s,n_s}\}$ and $\{P_s,Y_{s,1},\ldots,Y_{s,n_s}\}$ are three bases of $\kk[x_{s,0},\dots,x_{s,n_s}]_1$. Call 
\begin{align*}
A_1=&\left\{J_0^{d_1} \prod_{t=2}^kL_t^{d_t} ,J_0^{d_1-1}Y_{1,e}\prod_{t=2}^kL_t^{d_t} \right\}_{e=1,\ldots,n_1}, &A_s& =\left\{J_0^{d_1}\prod_{t\neq s,1}L_t^{d_t}Y_{s,j} \right\}_{j=1,\ldots,n_s},
\\
B_1=&\left\{ J_1^{d_1}\prod_{t=2}^kM_t^{d_t} ,J_1^{d_1-1}Y_{1,e}\prod_{t=2}^kM_t^{d_t} \right\}_{e=1,\ldots,n_1}, &B_s& =\left\{J_1^{d_1}\prod_{t\neq s,1}M_t^{d_t}Y_{s,j} \right\}_{j=1,\ldots,n_s},
\\
C_1=&\left\{J_2^{d_1}\prod_{t=2}^kP_t^{d_t}, J_2^{d_1-1}Y_{1,e}\prod_{t=2}^kP_t^{d_t} \right\}_{e=1,\ldots,n_1}, &C_s& =\left\{J_2^{d_1}\prod_{t\neq s,1}P_t^{d_t}Y_{s,j} \right\}_{j=1,\ldots,n_s},
\end{align*}
for every $s\in\{2,\dots,k\}$. Then $A_1\cup\cdots\cup A_k$, $B_1\cup\cdots\cup B_k$ and $C_1\cup\cdots\cup C_k$ are bases for the affine tangent spaces at $p_1$, $p_2$ and $p_3$ respectively. Since $J_0,J_1,J_2$ are linearly independent, $A_s\cup B_s\cup C_s$ is linearly independent for each $s\in\{2,\ldots,k\}$. By hypothesis
$d_1-1\geq2$, so $A_1\cup B_1\cup C_1$ is also independent. This concludes the base case because the elements of $A_h,B_h,C_h$ belong to complementary spaces with respect to $A_g,B_g,C_g$ if $h\neq g$.

Now assume that $r\geq 4$.  Since we are dealing with more than three points, there exists $H\in H^0(\oo_{\p^{n_1}\times\cdots\times\p^{n_k}}(1,0,\ldots,0))$ such that $\pi_1(S\cap H)$ consists of a unique point  and $\pi_1(S\setminus H)$ is still not collinear.  By our assumption that $\pi_\ell(S)$ is a point for all $\ell\neq 1$, it follows that $S\cap H$ also consists of a unique point.  Since $r - 1=\ru{\frac{d_1}{2}}=\ru{\frac{(d_1-2)+2}{2}}$, we assume by induction that $\nu(S\setminus H)\notin\Ter_{r-1}(\SV_{n_1\times\cdots\times n_k}^{d_1-2,d_2,\ldots,d_k})$, and then apply \cref{lemma: induction for segre-veronese terracini}.
\end{proof}

\begin{lemma}\label{lemma: step 2 unified }
Suppose $X=\SV^{d_1,\ldots,d_k}_{n_1\times \cdots \times n_k}$ and let $S\subset \PP^{n_1}\times 	\cdots \times \PP^{n_k}$ be a set of $r\geq 3$ points. If  there exists $\ell\notin J$ such that $\#\pi_\ell(S)\geq2$ then $ \nu(S)\notin \Ter_r(X)$.
\end{lemma}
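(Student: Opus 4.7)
The plan is to reduce, via \Cref{lemma: induction for segre-veronese terracini}, to showing that a certain smaller Terracini locus is empty. The key observation driving the plan is that because $\ell\notin J$, the degree $d_\ell$ is not among those realizing the minimum $r=\min_i\ru{(d_i+2)/2}$, so decreasing $d_\ell$ by $2$ does not lower this minimum and keeps every Terracini locus below $r$ empty for the resulting variety.

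First I would unpack the hypothesis $\ell\notin J$: it means $\ru{(d_\ell+2)/2}>r$, hence $d_\ell\geq 2r-1\geq 5$ (using $r\geq 3$). In particular $d_\ell\geq 3$, so \Cref{lemma: induction for segre-veronese terracini} can be applied with $i=\ell$.

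Next I would choose the hyperplane. Using $\#\pi_\ell(S)\geq 2$, pick some $q\in\pi_\ell(S)$ together with a hyperplane $H'\subset\PP^{n_\ell}$ passing through $q$ but avoiding $\pi_\ell(S)\setminus\{q\}$ (a general hyperplane through $q$ works when $n_\ell\geq 2$, while $H'=\{q\}$ handles $n_\ell=1$). Setting $H=\pi_\ell^{-1}(H')$ and $s=\#(S\cap H)$, the number $s$ equals the multiplicity of $q$ in the multiset $\pi_\ell(S)$, which is between $1$ and $r-1$. The hypothesis $s\leq\ru{d_j/2}$ of \Cref{lemma: induction for segre-veronese terracini} then holds for every $j$: for $j\in J$ we have $d_j\in\{2r-3,2r-2\}$ so $\ru{d_j/2}=r-1\geq s$, while for $j\notin J$ we have $\ru{d_j/2}\geq r>s$.

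Finally I would verify that $X'=\SV^{d_1,\ldots,d_\ell-2,\ldots,d_k}_{n_1\times\cdots\times n_k}$ has its first nonempty Terracini index still equal to $r$: the values $\ru{(d_i+2)/2}$ for $i\neq\ell$ are unchanged, and some $i\in J$ still realizes $r$, while for $i=\ell$ we have $\ru{d_\ell/2}\geq r$ because $d_\ell\geq 2r-1$. By \cite[Theorem 1.1]{LM23} this yields $\Ter_{r-s}(X')=\varnothing$ (since $r-s<r$), so $\nu(S\setminus H)\notin\Ter_{r-s}(X')$ holds vacuously and \Cref{lemma: induction for segre-veronese terracini} concludes that $\nu(S)\notin\Ter_r(X)$. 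I do not foresee any serious obstacle; the only delicate point is the threshold-preservation observation in the first paragraph, which is the heart of the argument.
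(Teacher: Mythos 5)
Your proposal is correct and follows essentially the same route as the paper's proof: pull back a general hyperplane through one point of $\pi_\ell(S)$ via $\pi_\ell$, observe that at least one point of $S$ survives so that $s\le r-1\le\ru{d_j/2}$ for all $j$, use \cite[Theorem 1.1]{LM23} to see that $\Ter_{r-s}$ of the variety with $d_\ell$ lowered by $2$ is empty (precisely because $\ell\notin J$ keeps the threshold at $r$), and conclude by \Cref{lemma: induction for segre-veronese terracini}. Your verification of the numerical hypotheses is if anything slightly more explicit than the paper's.
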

\begin{proof}
Without loss of generality, we assume that $\ell=k$.  For a given point $a\in\pi_k(S)$ and a general hyperplane $H'\subset\PP^{n_k}$ containing $a$, let $H=\pi_k^{-1}(H')$ and note that $S\cap H$ may contain multiple points, all with $a$ in the $k$th factor.  However, at least one point of $S$ does not lie in $H$, and so if $s=\#(S\cap H)$, then $s\leq r-1 = \ru{\frac{d_1}{2}}$ and thus $\#(S\setminus H)=r-s\leq r-1$ for all $i$.  Since $k\notin J$, it follows by \cref{def: r and J for SV} that $r-s\leq\ru{\frac{d_k-2}{2}}$ as well, and so $\nu(S\setminus H)\notin\Ter_{r-s}(\SV_{n_1\times\cdots\times n_k}^{d_1,\ldots,d_k-2})$ by \cite[Theorem 1.1]{LM23}.  Therefore, $\nu(S)\notin\Ter_r(X)$ by \cref{lemma: induction for segre-veronese terracini}.
\end{proof}

\begin{lemma}\label{lemma: third lemma converse SV}
Suppose $X=\SV^{d_1,\ldots,d_k}_{n_1\times \cdots \times n_k}$ and let  $S\subset \PP^{n_1}\times\cdots \times \PP^{n_k}$ be a set of $r\geq 3$ points such that there exist distinct $i,\ell \in J$ such that $\pi_i(S)\geq 2$ and $\pi_{\ell}(S)\geq 2$. Then $\nu(S)\notin \Ter_r(X) $.
\end{lemma}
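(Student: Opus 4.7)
Plan: I would apply \Cref{lemma: induction for segre-veronese terracini} to a hyperplane of the form $H=\pi_i^{-1}(H')$ with $H'\subset\PP^{n_i}$ a hyperplane. Since $i\in J$, we have $d_i\in\{2r-3,2r-2\}$, so $\lceil d_i/2\rceil=r-1=\min_j\lceil d_j/2\rceil$; this means that the lemma tolerates any $s=\#(S\cap H)$ with $1\le s\le r-1$. Moreover, in the reduced Segre-Veronese variety with multidegree $(d_1,\ldots,d_i-2,\ldots,d_k)$, the new index set equals $J'=\{i\}$ and the smallest nonempty Terracini locus has index $r-1$ by \cite[Theorem 1.1]{LM23}.

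I would first try to choose $H'$ so that $s\ge 2$. Swapping the roles of $i$ and $\ell$ if needed, this succeeds whenever either $n_i\ge 2$ or $n_i=1$ with $\#\pi_i(S)<r$: in the former subcase I take $H'$ through two distinct values of $\pi_i(S)$ while missing at least one other value (possible since $\#\pi_i(S)\ge 2$), and in the latter I pick $H'$ to be a $\pi_i$-value of multiplicity at least two. In either situation $r-s\le r-2<r-1$, so $\nu(S\setminus H)\notin\Ter_{r-s}$ of the reduced variety by \cite[Theorem 1.1]{LM23}, and \Cref{lemma: induction for segre-veronese terracini} finishes the argument. The remaining configuration is $n_i=n_\ell=1$ together with $\#\pi_i(S)=\#\pi_\ell(S)=r$, in which case $s=1$ is forced. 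Then $\#\pi_\ell(S)=r\ge 3$ ensures $\#\pi_\ell(S\setminus H)\ge r-1\ge 2$ automatically, and now $r-s=r-1$ matches the smallest nonempty Terracini locus index of the reduced variety. To conclude I invoke the previously proved structural results: when $r\ge 4$ I apply \Cref{lemma: step 2 unified } to the reduced SV, using the external index $\ell\notin J'$ together with $\#\pi_\ell(S\setminus H)\ge 2$; when $r=3$ I apply \Cref{prop: two points SV}, whose characterization of $\Ter_2$ fails here because $J'=\{i\}$ but $\#\pi_\ell(S\setminus H)\ge 2$. For the $r=3$ step one must check that the reduced multidegree still satisfies \Cref{remark: no segre subvarieties}; this is fine because $r\ge 3$ forces every original $d_j\ge 3$, so the reduction creates at most one degree-one factor.

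The main difficulty is the borderline case $s=1$: the numerical estimate \cite[Theorem 1.1]{LM23} alone does not rule out $\nu(S\setminus H)$ from $\Ter_{r-1}$ of the reduced variety, so one must appeal to the geometric characterizations of its first nonempty Terracini locus provided by \Cref{prop: two points SV} and \Cref{lemma: step 2 unified }.
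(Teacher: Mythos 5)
Your overall strategy is the same as the paper's: reduce along one of the two factors indexed by $J$ via \Cref{lemma: induction for segre-veronese terracini}, dispose of the case where the slice can be made to contain $2\le s\le r-1$ points of $S$ by the numerical vanishing of \cite[Theorem 1.1]{LM23}, and treat the forced $s=1$ case using the geometric characterizations already established. Your handling of the borderline $s=1$ case is correct and in fact slightly cleaner than the paper's: for $r\ge 4$ the paper appeals to induction on the lemma itself, whose hypotheses do not literally hold for the reduced variety (there $J'=\{i\}$ is a singleton), whereas your appeal to \Cref{lemma: step 2 unified } with the external index $\ell\notin J'$ is exactly the right justification; your $r=3$ base case via \Cref{prop: two points SV}, including the verification of \Cref{remark: no segre subvarieties} for the reduced multidegree, is also correct.

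There is, however, a concrete flaw in your criterion for when $s\ge 2$ is achievable. Your recipe for $n_i\ge 2$ --- a hyperplane $H'$ through two distinct values of $\pi_i(S)$ missing another value --- yields $S\cap H=S$, i.e.\ $s=r$, whenever every hyperplane through two of the values contains all of them; this violates the bound $s\le\ru{\frac{d_j}{2}}=r-1$ needed for \Cref{lemma: induction for segre-veronese terracini} and leaves no residual set to work with. This failure occurs when $\#\pi_i(S)=2$ (there is no third value to miss) and, more generally, whenever $\pi_i(S)$ is collinear, since a hyperplane containing two points contains the line they span. In particular, the configuration in which both $\pi_i(S)$ and $\pi_\ell(S)$ consist of $r$ distinct collinear points in factors of dimension at least $2$ is covered by neither your case (a) nor your case (b). The repair is to organize the dichotomy as the paper does, by the fibers of the projections rather than by $n_i$: if some fiber of $\pi_i|_S$ or $\pi_\ell|_S$ contains at least two points (equivalently, the corresponding projection has fewer than $r$ values), a general $H'$ through that single value gives $2\le s\le r-1$ and \cite[Theorem 1.1]{LM23} finishes; otherwise $\#\pi_i(S)=\#\pi_\ell(S)=r$, and a general $H'$ through one value gives $s=1$, after which your borderline argument applies verbatim for any $n_i\ge 1$, not only for $n_i=1$. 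With that reorganization your proof is complete and coincides in substance with the paper's.
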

\begin{proof}
For simplicity we set $\ell=1$.  For a given point $a\in\PP^{n_1}$, we can take a general hyperplane $H'\subset\PP^{n_1}$ through $a$ and let $H=\pi_1^{-1}(H')$.  However, we choose $a$ carefully as follows depending on the value of $\#\pi_1(S)$.

First, assume $\#\pi_1(S)<r$.  This means that there is an element of $\pi_1(S)$, say $a$, that corresponds to at least two points of $S$ which differ in another factor (possibly the $i$th).  Take a general $H$ passing through $a$, so if $s=\#(S\cap H)$, then $s\geq 2$ and $\#(S\setminus H)=r-s\leq r-2=\ru{\frac{d_1-2}{2}}$.  Therefore $\nu(S\setminus H)\notin\Ter_{r-s}(\SV_{n_1\times\cdots\times n_k}^{d_1-2,d_2,\ldots,d_k})$ by \cite[Theorem 1.1]{LM23}.  And since $s\leq\#\pi_1(S)\leq r - 1=\ru{\frac{d_1}{2}}$, we see that $\nu(S)\notin\Ter_r(X)$ by \cref{lemma: induction for segre-veronese terracini}.

Now assume that $\#\pi_1(S)=r$, i.e., every point of $S$ has a distinct element in the first factor.  Since $\#\pi_i(S)\geq 2$, we may select distinct $b_1,b_2\in\pi_i(S)$, and thus there must exist distinct $a_1,a_2\in\pi_1(S)$ such that $a_1\in\pi_1(\pi_i^{-1}(b_1))$ and $a_2\in\pi_1(\pi_i^{-1}(b_2))$.  But since $r\geq 3$, we may find yet another element $a_3\in\pi_1(S)$ and choose a general hyperplane $H$ through $a_3$.  Then by construction, $\{a_1,a_2\}\subseteq\pi_1(S\setminus H)$ and $\{b_1,b_2\}\subseteq\pi_i(S\setminus H)$.

We argue by induction on $r$.  For $r=3$, note that $\nu(S\setminus H)\notin\Ter_2(\SV_{n_1\times\cdots\times n_k}^{d_1-2,d_2,\ldots,d_k})$ by \cref{prop: two points SV} since at least two factors have projections of cardinality at least two, and so $\nu(S)\notin\Ter_3(X)$ by \cref{lemma: induction for segre-veronese terracini}.  For $r\geq 4$, we may assume by induction using \cref{lemma: induction for segre-veronese terracini} that $\nu(S\setminus H)\notin\Ter_{r-1}(\SV_{n_1\times\cdots\times n_k}^{d_1-2,d_2,\ldots,d_k})$ since $r-1=\ru{\frac{d_1-2+2}{2}}$, and so $\nu(S)\notin\Ter_r(X)$.
\end{proof}

We are now ready to prove the converse of \Cref{pro: SV has nonempty terracini} for all $r\geq 3$.
\begin{proposition}
Let $X=\SV^{d_1,\ldots,d_k}_{n_1\times \cdots \times n_k}$ and let  $S\subset \PP^{n_1}\times 	\cdots \times \PP^{n_k}$ be a set of $r\geq 3$ points such that for every $i\in J$,  at least one of the following holds:
\begin{itemize}
    \item $\pi_i(S)$ is not collinear,
\item there exists $\ell\in\{1,\ldots,k\}\setminus\{i\}$ such that $\#(\pi_\ell(S))\ge 2$.
\end{itemize}
Then $\nu(S)\notin \Ter_r(X)$.
\end{proposition}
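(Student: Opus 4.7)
The plan is to perform a case analysis on the combinatorial structure of the projections $\pi_1(S),\ldots,\pi_k(S)$, showing that in each case one of the three preceding lemmas applies and yields $\nu(S)\notin\Ter_r(X)$. The statement is precisely the converse of \Cref{pro: SV has nonempty terracini} for $r\geq 3$, and the hypothesis is designed to exclude the unique configuration that produces Terracini points, so one expects the preceding lemmas to cover every remaining case.

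First I would dispose of the case in which some factor outside $J$ contributes nontrivially, i.e.\ there exists $\ell\notin J$ with $\#\pi_\ell(S)\geq 2$; here \Cref{lemma: step 2 unified } applies immediately. From this point on I assume $\#\pi_\ell(S)=1$ for every $\ell\notin J$, so that all combinatorial richness of $S$ is concentrated in the $J$-factors.

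Next I would split according to how many indices $i\in J$ satisfy $\#\pi_i(S)\geq 2$. If there are at least two such indices, \Cref{lemma: third lemma converse SV} concludes the argument. Otherwise, combining $\#\pi_\ell(S)=1$ for every $\ell\notin J$ with $\#\pi_i(S)=1$ for all but at most one $i\in J$, the condition $\#S=r\geq 3$ forces the existence of exactly one index $i_0\in J$ with $\#\pi_{i_0}(S)\geq 2$ (otherwise $\#S\leq 1$). Since every other projection is a single point, $S$ lies inside the fibre $\PP^{n_{i_0}}\times\prod_{\ell\neq i_0}\{q_\ell\}$, so alternative (b) of the hypothesis fails at $i=i_0$. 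The hypothesis then forces $\pi_{i_0}(S)$ not to be collinear, and \Cref{lemma: if not collinear before j hat then not in the terracini} closes the proof.

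I do not anticipate any serious obstacle: the three preceding lemmas were tailored exactly for these cases, and the only point to verify is that the case split is exhaustive. The single combinatorial subtlety is the counting argument that $\#S\geq 3$ rules out the degenerate configuration in which no projection is large; without it, the last case would not be well-defined.
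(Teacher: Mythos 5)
Your proof is correct and uses exactly the same three lemmas and case division as the paper; the only difference is the order of the cases. If anything, your ordering (eliminating the $\ell\notin J$ case and the two-large-projections case first, so that when you finally invoke \Cref{lemma: if not collinear before j hat then not in the terracini} the hypothesis that $\#\pi_\ell(S)=1$ for all $\ell\neq i_0$ is actually verified) is slightly more careful than the paper's, which applies that lemma first without explicitly checking this side condition.
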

\begin{proof}
   If there exists an $ i\in J$ such that $\pi_i(S)$ is not collinear, then $\nu(S)\notin\Ter_r(X)$ by \Cref{lemma: if not collinear before j hat then not in the terracini}. Therefore we assume that for every $i\in J$ the set $\pi_i(S)$ is collinear and there exist two distinct indices $\ell\neq i$ such that $\#(\pi_\ell(S))\ge 2$. If there is some $\ell\notin J$ such that $\#\pi_\ell(S)\geq 2$, then $\nu(S)\notin\Ter_r(X)$ by \Cref{lemma: step 2 unified }. Hence we only have to prove our statement under the assumption that there exist distinct $i,\ell \in J$ such that $\#\pi_i(S)\ge 2$ and $\#\pi_\ell(S)\geq 2$, which is done in \Cref{lemma: third lemma converse SV}.
\end{proof}

Combining the above result with \Cref{pro: SV has nonempty terracini} we obtain our characterization of the first nonempty Terracini loci of Segre-Veronese varieties.

\begin{theorem}\label{theorem: first nonempty SV}
Let $d_1,\ldots,d_k$ be positive integers such that $\#\{i\mid d_i=1\}\leq 1$. Let $X=\SV_{n_1\times\cdots\times n_k}^{d_1,\dots,d_k}$ be the Segre-Veronese variety and let $r$ and $J$ be as in \cref{def: r and J for SV}. For $i\in\{1,\ldots,k\}$, define 
    \begin{align*}
\mathcal{T}_i &=
\{S\in (\p^{n_1}\times\cdots\times\p^{n_k})^{(r)}\mid\dim\langle\pi_i(S)\rangle=1\mbox{ and } \pi_j(S)\mbox{ is a point }\forall\ j\neq i \}
.\end{align*}
Then $$\Ter_r(X)=\bigcup_{i\in J}\overline{\{\nu(S)\mid S\in\mathcal{T}_i\}}$$
has an irreducible component of dimension $(n_1+\cdots +n_k)+ n_i+r-2$ for each $i\in J$.
\end{theorem}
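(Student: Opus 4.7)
The plan is to split the proof into the two set inclusions, then verify irreducibility, dimension, and distinctness of each component indexed by $i\in J$.

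For the inclusion $\bigcup_{i\in J}\overline{\{\nu(S)\mid S\in\mathcal{T}_i\}}\subseteq \Ter_r(X)$, I would simply invoke \Cref{pro: SV has nonempty terracini}: any $S\in\mathcal{T}_i$ with $i\in J$ satisfies by definition the hypothesis that $\pi_i(S)$ is collinear (since $\dim\langle\pi_i(S)\rangle=1$) and $\pi_\ell(S)$ is a point for all $\ell\neq i$, so $\nu(S)\in\Ter_r(X)$, and taking closures (inside the closed set $\Ter_r(X)$) gives the inclusion.

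For the reverse inclusion, the contrapositive splits along the value of $r$. If $r=2$, \Cref{prop: two points SV} shows that $\nu(S)\in\Ter_2(X)$ forces the existence of $i\in J$ with $\pi_\ell(S)$ a single point for every $\ell\neq i$; since $\#S=2$, the remaining factor $\pi_i(S)$ automatically consists of two distinct (hence collinear with $\dim\langle\pi_i(S)\rangle=1$) points, so $S\in\mathcal{T}_i$. If $r\geq 3$, the (unnamed) proposition immediately preceding the theorem shows that $\nu(S)\in\Ter_r(X)$ forces the existence of $i\in J$ with both $\pi_i(S)$ collinear and $\#\pi_\ell(S)=1$ for all $\ell\neq i$; since $\#S=r\geq 2$ and the non-$i$ projections collapse to points, $\pi_i(S)$ must have at least two distinct points, so $\dim\langle\pi_i(S)\rangle=1$ and $S\in\mathcal{T}_i$.

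For irreducibility and dimension, for each $i\in J$ I would exhibit $\mathcal{T}_i$ as the image of the dominant morphism
\[
\left(\prod_{j\neq i}\PP^{n_j}\right)\times G(1,n_i)\times (\PP^1)^{(r)}\longrightarrow \mathcal{T}_i,
\]
sending a tuple of points $(q_j)_{j\neq i}$, a line $\ell\subset\PP^{n_i}$, and an unordered $r$-tuple $\{a_1,\dots,a_r\}\subset\ell\cong\PP^1$, to the unordered set of $r$ points in $\PP^{n_1}\times\cdots\times\PP^{n_k}$ obtained by inserting each $a_s$ into the $i$th factor and the fixed $q_j$ into the others. The source is a product of irreducible varieties, hence irreducible, and a general fibre is zero-dimensional (the line $\ell$ and the configuration of points on it are recovered from the image), so
\[
\dim\mathcal{T}_i=\sum_{j\neq i}n_j+2(n_i-1)+r=\left(\sum_{j=1}^k n_j\right)+n_i+r-2.
\]
Composing with $\nu$, which is a closed immersion, gives the stated dimension for $\overline{\{\nu(S)\mid S\in\mathcal{T}_i\}}$.

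Finally, to see that distinct indices $i\in J$ yield distinct irreducible components (so that none is absorbed by another), I would observe that a general $S\in\mathcal{T}_i$ has $\#\pi_i(S)=r\geq 2$, whereas every element of $\mathcal{T}_j$ with $j\neq i$ satisfies $\#\pi_i(S)=1$; thus neither closure contains the general point of the other. The expected main obstacle is not in this plan but already absorbed into the preceding lemmas, namely the delicate induction in \Cref{lemma: third lemma converse SV} that rules out the case where two factors indexed by $J$ each carry at least two distinct projections.
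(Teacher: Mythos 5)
Your proposal is correct and follows essentially the same route as the paper, which states the theorem with no written proof beyond ``combining the above result with \Cref{pro: SV has nonempty terracini}'': the forward inclusion from \Cref{pro: SV has nonempty terracini}, the reverse inclusion from \Cref{prop: two points SV} (for $r=2$) and the unnamed proposition (for $r\ge 3$), and the dimension count $\sum_j n_j + n_i + r - 2$ via the parametrization by $\bigl(\prod_{j\neq i}\PP^{n_j}\bigr)\times G(1,n_i)\times(\PP^1)^{(r)}$ all match what the authors intend. Your added details on irreducibility and on why the components for distinct $i\in J$ are not contained in one another are correct and in fact fill in steps the paper leaves implicit.
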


\Cref{theorem: first nonempty SV} gives a complete characterization of the first nonempty Terracini locus of a Segre-Veronese variety. 
By applying \Cref{propos: Ter are nested}, we get a lower bound on the dimension of higher Terracini loci of Segre-Veronese varieties.
\begin{corollary}\label{corollary: higher SV terracini loci}
Let $d_1,\ldots,d_k$ be positive integers satisfying \cref{remark: no segre subvarieties} and let $X=\SV_{n_1\times\cdots\times n_k}^{d_1,\ldots,d_k}$ be the Segre-Veronese variety. Let $s\geq r$ be an integer satisfying \eqref{eq: subabundant}. Then $\Ter_s(X)$ has dimension at least
\[(n_1+\cdots+n_k)(s-r)+\max_{i\in J}\{(n_1+\cdots +n_k)+ n_i+r-2\}.\]
\end{corollary}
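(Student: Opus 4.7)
The plan is to combine the exact dimension formula for the first nonempty Terracini locus from \Cref{theorem: first nonempty SV} with the nestedness estimate from \Cref{propos: Ter are nested}, iterated $s-r$ times.

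More precisely, I would start by picking an index $i_0 \in J$ that achieves the maximum in the expression $\max_{i\in J}\{(n_1+\cdots+n_k)+n_i+r-2\}$. By \Cref{theorem: first nonempty SV}, the corresponding irreducible component $\overline{\{\nu(S)\mid S\in\mathcal{T}_{i_0}\}}$ of $\Ter_r(X)$ has dimension exactly $(n_1+\cdots+n_k)+n_{i_0}+r-2$. In particular, $\Ter_r(X)\neq\varnothing$, so we may apply \Cref{propos: Ter are nested}, which yields
\[
\dim\Ter_{r+1}(X)\ge \dim\Ter_r(X)+\dim X = (n_1+\cdots+n_k)+n_{i_0}+r-2+(n_1+\cdots+n_k).
\]
Since $\Ter_{r+1}(X)$ is therefore nonempty, we may apply \Cref{propos: Ter are nested} again, and so on.

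Proceeding by a straightforward induction on $s\ge r$, after $s-r$ applications of \Cref{propos: Ter are nested} we obtain
\[
\dim\Ter_s(X)\ge (n_1+\cdots+n_k)(s-r)+(n_1+\cdots+n_k)+n_{i_0}+r-2,
\]
which is exactly the claimed bound. The one thing to check at each step is that the hypothesis $\Ter_j(X)\neq\varnothing$ needed to invoke \Cref{propos: Ter are nested} is satisfied at each intermediate value $j\in\{r,\ldots,s-1\}$; but this is automatic, since having positive dimension at stage $j$ (which is forced by the inductive bound whenever $r\le j<s$) implies nonemptiness, and the base case $j=r$ is guaranteed by \Cref{theorem: first nonempty SV}. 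There is no real obstacle here: the statement is essentially a mechanical consequence of a known dimension formula and a monotonicity principle already established in the paper, and the only subtlety is to make sure the chain of inductive applications is well-founded.
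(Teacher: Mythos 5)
Your proof is correct and follows exactly the route the paper intends: the corollary is stated without proof immediately after the remark that it follows ``by applying \Cref{propos: Ter are nested}'' to the dimension formula of \Cref{theorem: first nonempty SV}, which is precisely your iterated application of the nestedness bound starting from the component of maximal dimension. The details you supply (nonemptiness at each intermediate stage, $\dim X=n_1+\cdots+n_k$) are the right ones and present no issue.
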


We conclude this section by highlighting a link between Terracini loci of Segre-Veronese surfaces and unexpected curves. 

\subsection*{A connection to unexpected curves} A standard tool to deal with Segre-Veronese varieties is the \emph{multiprojective-affine-projective} method, that allows to work on linear systems on $\p^{n_1}\times\p^{n_2}$ by studying linear systems on $\p^{n_1+n_2}$. We recall here a special case of \cite[Theorem 1.5]{CGG05}.
\begin{lemma}\label{lemma: multiproj-affine-proj} Let $f:\p^1\times\p^1\dashrightarrow\p^2$ be the birational map defined by
\[([s_1:t_1],[s_2:t_2])\mapsto
\left[1:\frac{t_1}{s_1}:\frac{t_2}{s_2}\right].
\]
Let $q_1=[0:1:0]$ and $q_2=[0:0:1]$ be the two indeterminacy points for $f^{-1}$. If $S$ is a set of distinct points and $Q=cq_1+dq_2\subset\p^2$ is a scheme of two fat points of multiplicities $c$ and $d$ supported at $q_1,q_2$, then $h^0(I_{2S,\p^1\times\p^1}(c,d))=h^0(I_{Q+2f(S),\p^2}(c+d))$.
\end{lemma}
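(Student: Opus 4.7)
The plan is to construct an explicit linear isomorphism
\[\varphi\colon H^0\bigl(\oo_{\p^1\times\p^1}(c,d)\bigr)\xrightarrow{\sim} H^0\bigl(I_Q(c+d)\bigr)\]
by dehomogenization followed by rehomogenization, and then to verify that it restricts to an isomorphism between the subspaces cutting out $2S$ on the left and $2f(S)$ on the right.

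Concretely, given a bihomogeneous form $F=\sum_{0\leq i\leq c,\,0\leq j\leq d} a_{ij}\,s_1^{c-i}t_1^i s_2^{d-j}t_2^j$ of bidegree $(c,d)$, substitute $u=t_1/s_1$ and $v=t_2/s_2$ to obtain $F(1,u;1,v)=\sum a_{ij}u^iv^j$, then rehomogenize with $x_0$ to total degree $c+d$ via $u=x_1/x_0$, $v=x_2/x_0$:
\[\varphi(F)=G(x_0,x_1,x_2)=\sum_{i\leq c,\,j\leq d} a_{ij}\,x_0^{c+d-i-j}x_1^i x_2^j.\]
Since distinct pairs $(i,j)$ produce distinct monomials, $\varphi$ is injective. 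A monomial inspection shows each term $x_0^{c+d-i-j}x_1^i x_2^j$ vanishes to order $c+d-i\ge d$ at $q_1$ and to order $c+d-j\ge c$ at $q_2$; hence $\varphi$ factors through $H^0(I_Q(c+d))$. A direct count gives
\[\dim H^0(I_Q(c+d))=\binom{c+d+2}{2}-\binom{c+1}{2}-\binom{d+1}{2}=(c+1)(d+1)=\dim H^0(\oo_{\p^1\times\p^1}(c,d)),\]
so $\varphi$ is a linear isomorphism onto $H^0(I_Q(c+d))$.

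For compatibility with $2S$, observe that $f$ restricts to a biregular isomorphism between $\{s_1s_2\neq 0\}\subset\p^1\times\p^1$ and $\{x_0\neq 0\}\subset\p^2$, with inverse $[x_0{:}x_1{:}x_2]\mapsto([x_0{:}x_1],[x_0{:}x_2])$. For any $p\in S$ (which must lie in the domain of $f$ for $f(S)$ to be defined), normalize representatives so that $s_1(p)=s_2(p)=1$; then $F(1,t_1;1,t_2)$ near $p$ and $G(1,x_1,x_2)$ near $f(p)$ are literally the same polynomial under the identification $t_i\leftrightarrow x_i$. Hence $F$ vanishes to order $\ge 2$ at $p$ if and only if $G$ does at $f(p)$, and $\varphi$ restricts to the desired isomorphism
\[H^0\bigl(I_{2S,\p^1\times\p^1}(c,d)\bigr)\xrightarrow{\sim} H^0\bigl(I_{Q+2f(S),\p^2}(c+d)\bigr),\]
from which the claimed equality of dimensions follows.

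The main bookkeeping step is identifying the image of $\varphi$ with $H^0(I_Q(c+d))$; this reduces to the per-monomial multiplicity bounds combined with the dimension count above. A cleaner conceptual alternative, worth keeping in reserve as a sanity check, lifts the whole picture to the common blowup $\widetilde X$ of $\p^1\times\p^1$ at $((0{:}1),(0{:}1))$ and of $\p^2$ at $\{q_1,q_2\}$: there the pullbacks of $\oo(c,d)$ and $\oo_{\p^2}(c+d)(-Q)$ agree as divisor classes, and $f$ lifts to the identity, which is the intrinsic reason for the isomorphism.
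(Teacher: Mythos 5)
The paper offers no proof of this lemma---it is quoted as a special case of \cite[Theorem 1.5]{CGG05}---so your self-contained argument is genuinely different from what the paper does, and its overall architecture (the explicit dehomogenize--rehomogenize isomorphism $\varphi$, the per-monomial multiplicity bounds, the dimension count, and the local comparison at the points of $S$) is the right one.

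There is, however, one concrete gap: your own computation contradicts the conclusion you draw from it. You correctly show that $x_0^{c+d-i-j}x_1^ix_2^j$ vanishes to order $c+d-i\ge d$ at $q_1$ and to order $c+d-j\ge c$ at $q_2$, and both bounds are attained (at $i=c$, resp.\ $j=d$). So the image of $\varphi$ is $H^0(I_{Q'}(c+d))$ with $Q'=dq_1+cq_2$, not $H^0(I_Q(c+d))$ with $Q=cq_1+dq_2$; for $c\neq d$ these are different spaces, and the step ``hence $\varphi$ factors through $H^0(I_Q(c+d))$'' does not follow. The statement as printed is in fact false when $c\neq d$: for $(c,d)=(1,3)$ and $S=\{([1{:}0],[1{:}0]),([1{:}1],[1{:}0])\}$ one has $h^0(I_{2S}(1,3))=4$, whereas $h^0(I_{q_1+3q_2+2f(S)}(4))=2$ (the two lines joining $q_2$ to the points of $f(S)$ split off, leaving the conics through four points) and $h^0(I_{3q_1+q_2+2f(S)}(4))=4$. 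What you have actually proved is the corrected statement with $Q=dq_1+cq_2$, which is also the normalization of \cite[Theorem 1.5]{CGG05}, the one the paper's own closing Example uses ($11q_1+q_2$ for bidegree $(1,11)$), and the one your blow-up remark forces, since $f^*\oo_{\p^2}(1)=\oo(1,1)$ gives pullback class $(c+d)H-dE_1-cE_2$. You should state this correction explicitly rather than silently identifying $Q$ with $Q'$.

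Two minor points. Your dimension count tacitly assumes the two fat points impose independent conditions; this is immediate here because the monomials killed at $q_1$ (those with $i>c$) and at $q_2$ (those with $j>d$) form disjoint sets, since $i>c$ and $j>d$ would give $i+j>c+d$. And the normalization $s_1(p)=s_2(p)=1$ needs $S\cap\{s_1s_2=0\}=\varnothing$, which is stronger than $S$ lying in the domain of $f$ (only $([0{:}1],[0{:}1])$ is excluded there); points of $S$ on $\{s_1=0\}\cup\{s_2=0\}$ are sent to $q_1$ or $q_2$ and the statement degenerates, so this hypothesis should be made explicit.
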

We want to draw a connection between the Terracini locus of Segre-Veronese surfaces and \emph{unexpected curves}, introduced in \cite[Definition 2.1]{unexpected}. A set of points $Z\subset\p^2$ admits an unexpected curve in degree $d$ if
\[h^0(I_{Z+(d-1)p}(d))>\max\left\{h^0(I_Z(d))-\binom{d}{2},0\right\}\mbox{ for every } p\in\p^2.\]
Now we want to show that sets of points admitting unexpected curves can be examples of elements of the Terracini loci of $\SV_{1\times 1}^{1,d-1}$.
\begin{proposition}\label{propos: terracini loci and unexpected curves}
Fix the birational map $f:\p^1\times\p^1\dashrightarrow\p^2$ and the points $q_1,q_2\in\p^2$ defined in \Cref{lemma: multiproj-affine-proj}. Consider the Segre-Veronese embedding $\nu:\p^1\times\p^1\to\SV_{1\times 1}^{1,d-1}\subset\p^{2d-1}$. Let $r\in\{2,\ldots,\rd{\frac{2d}{3}}\}$ and let $S\subset\p^1\times\p^1$ be a set of $r$ points. If $f(S)+\{q_1\}$ admits an unexpected curve in degree $d-r$, then $\nu(S)\in\Ter_r(\SV_{1\times 1}^{1,d-1})$.
\begin{proof}
By \Cref{lemma: multiproj-affine-proj}, we have
\[I_{2S,\p^1\times\p^1}(1,d-1)\cong I_{2f(S)+q_1+(d-1)q_2,\p^2}(d)\cong I_{f(S)+q_1+(d-1-r)q_2,\p^2}(d),\]
where the second isomorphism comes from Bézout's theorem. Notice that these linear systems have not only the same $h^0$, but also the same $h^1$, so one is special if and only if the other one is. By hypothesis, the system on the right hand side has dimension larger than expected, so $I_{2S,\p^1\times\p^1}(1,d-1)$ is special as well and therefore $\nu(S)\in\Ter_r(\SV_{1\times 1}^{1,d-1})$.
\end{proof}
\end{proposition}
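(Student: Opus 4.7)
The condition $\nu(S)\in\Ter_r(\SV_{1\times 1}^{1,d-1})$ is equivalent, by \Cref{def: Terracini locus with cohomology} and the vanishing $h^1(\oo_{\p^1\times\p^1}(1,d-1))=0$, to $h^1(I_{2S,\p^1\times\p^1}(1,d-1))>0$. My plan is to translate this to a statement on $\p^2$ via \Cref{lemma: multiproj-affine-proj}, use Bézout's theorem to strip $r$ lines from the ideal, and then apply the unexpected curve hypothesis at the specific point $q_2$.

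First, \Cref{lemma: multiproj-affine-proj} identifies $h^0(I_{2S,\p^1\times\p^1}(1,d-1))$ with $h^0(I_{2f(S)+q_1+(d-1)q_2,\p^2}(d))$ (and similarly for $h^1$, via Euler characteristic). Next, for each $p\in f(S)$, any curve of degree $d$ in the latter system intersects the line $\langle q_2,p\rangle$ with multiplicity at least $(d-1)+2=d+1>d$, so by Bézout the line is a base component. Factoring out the resulting $r$ distinct lines yields the corresponding equalities for $h^0$ and $h^1$ of $I_{f(S)+q_1+(d-r-1)q_2,\p^2}(d-r)$. A direct Euler-characteristic computation shows that both $\chi(I_{2S,\p^1\times\p^1}(1,d-1))$ and $\chi(I_{f(S)+q_1+(d-r-1)q_2,\p^2}(d-r))$ equal $2d-3r$, which is non-negative under $r\le\rd{2d/3}$. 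Thus the Terracini condition reduces to the strict inequality $h^0(I_{f(S)+q_1+(d-r-1)q_2,\p^2}(d-r))>2d-3r$. Setting $Z=f(S)\cup\{q_1\}$, the unexpected curve hypothesis at $p=q_2$ yields $h^0(I_{Z+(d-r-1)q_2}(d-r))>\max\{h^0(I_Z(d-r))-\binom{d-r}{2},0\}$. Since $Z$ is a scheme of $r+1$ points, the bound $h^0(I_Z(d-r))\ge\binom{d-r+2}{2}-(r+1)$ together with $r\le\rd{2d/3}$ makes the right-hand side at least $2d-3r$, exactly what is needed.

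The most delicate step is the Bézout reduction: I must check that the $r$ lines $\ell_p=\langle q_2,p\rangle$ for $p\in f(S)$ are distinct and avoid $q_1$, so that they can be legitimately factored out of the ideal without affecting the remaining base conditions. Distinctness follows from $f(S)$ being a set of $r$ distinct points; avoidance of $q_1$ follows from the fact that the line $\langle q_1,q_2\rangle=\{x=0\}$ does not meet the image of $f$. No other step of the plan requires any cleverness.
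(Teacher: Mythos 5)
Your proof follows the same route as the paper's: translate to $\p^2$ via \Cref{lemma: multiproj-affine-proj}, strip the lines $\langle q_2,p\rangle$ for $p\in f(S)$, and invoke the unexpected-curve hypothesis at the point $q_2$; your Euler-characteristic bookkeeping and the verification that the unexpected-curve threshold dominates $2d-3r$ are correct and usefully more explicit than the paper's. However, the justification you single out as the delicate step contains a false claim: the $r$ lines $\langle q_2,p\rangle$ need \emph{not} be distinct. Since $q_2=[0:0:1]$, the pencil of lines through $q_2$ pulls back under $f$ to the first ruling of $\p^1\times\p^1$, so two distinct points of $S$ sharing the same first coordinate map to the same line through $q_2$ -- distinctness of the points of $f(S)$ does not help. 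If $k\ge 3$ points of $f(S)$ lie on one such line, B\'ezout only forces that line to be a component with multiplicity $2$, not $k$, so the asserted \emph{equalities} of $h^0$ and $h^1$ are not justified as written (the paper's own proof glosses over the same point). The argument is easily repaired: for the conclusion you only need the inequality $h^0(I_{2f(S)+q_1+(d-1)q_2}(d))\ge h^0(I_{f(S)+q_1+(d-1-r)q_2}(d-r))$, and this follows from injectivity of multiplication by the degree-$r$ form $\prod_i \ell_i^{k_i}$ (where $\ell_1,\dots,\ell_u$ are the distinct lines and $k_i$ is the number of points of $f(S)$ on $\ell_i$), with no B\'ezout argument and no distinctness needed; combined with your lower bound $h^0(I_{f(S)+q_1+(d-1-r)q_2}(d-r))>2d-3r$ this already gives $h^1(I_{2S,\p^1\times\p^1}(1,d-1))>0$.
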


We believe that this connection can be useful, because unexpected curves have been intensively studied. Just to recall a few results,
\begin{itemize}
    \item \cite[Lemma 3.5(c)]{unexpected} bounds the minimal degree of an unexpected curve.
\item \cite[Theorem 1.2 and Corollary 6.8]{unexpected}
show that if a set of $r+1$ points admits an unexpected curve in degree $d-r$ then no $d-r+1$ are collinear, but at least three of them are collinear.
\item \cite[Theorem 3.7]{unexpected} studies Hilbert functions of sets admitting unexpected curves.
\item \cite[Theorems 1.4 and 1.6]{unexpectedquartic} and \cite[Theorem 4.4]{dimca} classify low degree unexpected curves.
\end{itemize}

\begin{example} Let $\SV_{1\times 1}^{1,11}\subset \p^{23}$ and consider $r=8$. Let $S\subset\p^1\times\p^1$ be a set of eight points and take two general points $q_1,q_2\in\p^2$. By Lemma \ref{lemma: multiproj-affine-proj},
\begin{align*}
h^0(I_{2S,\p^1\times\p^1}(1,11))=h^0(I_{2S+11q_1+q_2,\p^2}(12))=h^0(I_{S+3q_1+q_2,\p^2}(4)).
\end{align*}
There are two possibilities in order for $S+3q_1+q_2$ to fail to give independent conditions on plane quartics. The first possibility is that $h^1(I_{S+q_2}(4))>0$. This happens for instance if six of the nine points are on a line, and gives some component of $\Ter_8(\SV_{1\times 1}^{1,11})$. The second possibility is that $S+q_2$ gives independent conditions on quartics but it admits an unexpected curve in degree 4. By \cite[Theorem 1.6]{unexpectedquartic}, up to projectivity there is only one such configuration of points, so this gives another component of $\Ter_8(\SV_{1\times 1}^{1,11})$ of dimension $\dim \PGL(2)=8$. 
\end{example}

\section{Algorithms}\label{section: algorithms}

In this section we present algorithms for finding the ideal of a multiprojective variety with a dimension that matches that of $\Ter_r(X)$ for some projective variety $X$ given a parametrization or an ideal. Let $R=\kk[x_0,\ldots,x_n]$.  Given a matrix $A$ with entries in $R$ and a set $S=\{p_1,\ldots,p_r\}$ of points in $\PP^n$, we can construct the ``stacked'' matrix $A(S)$ by evaluating $A$ at each $p_i$ and concatenating the rows, i.e.,

\begin{equation*}
A(S) = \begin{pmatrix} A(p_1) \\
\vdots \\
A(p_r)
\end{pmatrix}
\end{equation*}
for some choice of the homogeneous coordinates of the elements of $S$.  Since the determinant is alternating and multilinear, the rank of $A(S)$ does not depend on the order or the choice of homogeneous coordinates of the elements of $S$. Given a tuple $\mathbf f=(f_0,\ldots,f_m)\in R^{m+1}$, will we denote by $J_{\mathbf f}$ its Jacobian, i.e., $(J_{\mathbf f})_{ij}=\begin{pmatrix} \frac{\partial f_j}{\partial x_i}\end{pmatrix}$.

We will deal with two distinct settings: one where the tuple $\mathbf f$ parametrizes $X$ and so the ambient space has dimension $m$
and another where $\mathbf f$ generates $I(X)$ and so the ambient space has dimension $n$
.  However, in this latter case, we can only say something about the $2$-Terracini locus.  Note that in the following two lemmas, the choice to express the upper bound on the rank of the stacked matrix in terms of the minimum of two values, even though the minimum is clear, is made to simplify generalizing both settings to \cref{lemma: construction of ideal of terracini locus} below.

\begin{lemma}
\label{lemma: stacked jacobian for parametrizations}
  Suppose $X\subset\PP^m$ is parametrized by the rational map $\varphi:\PP^n\dashrightarrow\PP^m$ defined by $\mathbf f$.  If $S\subset\PP^n$ and $\varphi(S)$ consists of $r$ smooth points of $X$, then $\varphi(S)\in\Ter_r(X)$ if and only if  $J_{\mathbf f}(S)$ has rank less than $\min\{r(\dim X+1), m+1\}$.
\end{lemma}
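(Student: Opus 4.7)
The approach is to translate the definition of $\Ter_r(X)$ directly into a rank condition by identifying, at each smooth point $\varphi(p)$, the affine tangent cone $\hat T_{\varphi(p)}X\subseteq\kk^{m+1}$ with the row span of $J_{\mathbf f}(p)$, and then stacking.

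The first step is the pointwise identification. Fix $p\in\PP^n$ whose image $q=\varphi(p)$ is a smooth point of $X$. The affine lift $\tilde\varphi=\mathbf f\colon\kk^{n+1}\dashrightarrow\kk^{m+1}$ has differential $v\mapsto v\cdot J_{\mathbf f}(p)$ at $p$, whose image is by definition the row span of $J_{\mathbf f}(p)$. This image lies inside the Zariski tangent space to the affine cone $\hat X$ at $\mathbf f(p)$, which is a linear subspace of $\kk^{m+1}$ of dimension $\dim X+1$ through the origin (because $\hat X$ is a cone and $q$ is smooth) and coincides with the affine cone $\hat T_q X$ over the embedded projective tangent space. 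Since $\varphi$ parametrizes $X$, the map $\tilde\varphi$ is a submersion at $p$, so the image of the differential is all of $\hat T_q X$; Euler's identity $\sum_i x_i\,\partial f_j/\partial x_i = d\cdot f_j$ further confirms that $\mathbf f(p)$ itself lies in the row span, consistent with $\hat T_q X$ being a linear subspace through the origin. In particular $\rank J_{\mathbf f}(p)=\dim X+1$.

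Stacking then gives that the row span of $J_{\mathbf f}(S)$ equals $\hat T_{\varphi(p_1)}X+\cdots+\hat T_{\varphi(p_r)}X$, and passing from affine to projective dimensions yields
\[
\rank J_{\mathbf f}(S) = \dim\langle T_{\varphi(p_1)}X,\ldots,T_{\varphi(p_r)}X\rangle + 1.
\]
By \Cref{def: Terracini locus with tangent spaces}, $\varphi(S)\in\Ter_r(X)$ is equivalent to this projective span having dimension strictly less than $r(\dim X+1)-1$, i.e.\ to $\rank J_{\mathbf f}(S)<r(\dim X+1)$. Since $J_{\mathbf f}(S)$ has only $m+1$ columns its rank is also bounded above by $m+1$, so the maximal possible rank is $\min\{r(\dim X+1),m+1\}$ and Terracini membership is exactly the failure to attain it.

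The only nontrivial ingredient is the identification between row span and affine tangent cone in the first step, where one passes from the projective morphism $\varphi$ to its affine lift $\tilde\varphi$; Euler's identity is the natural tool that makes this passage clean and ensures that the row span really is the whole affine cone rather than an affine translate of a subspace. Everything else reduces to block-matrix bookkeeping and direct unpacking of the definitions.
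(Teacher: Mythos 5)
Your proof is correct and takes essentially the same route as the paper's, which simply asserts ``by construction'' that the projectivization of the row space of $J_{\mathbf f}(S)$ is $\langle T_{\varphi(p_1)}X,\ldots,T_{\varphi(p_r)}X\rangle$ and then invokes \Cref{def: Terracini locus with tangent spaces} and \eqref{eq: subabundant}; you have merely filled in the pointwise identification of the row span with the affine tangent cone (via Euler's identity) and the stacking step. The one fragile point --- your claim that $\tilde\varphi$ is automatically a submersion at $p$ because $\varphi$ parametrizes $X$, which can fail at ramification points of the parametrization even when $\varphi(p)$ is a smooth point of $X$ --- is precisely the assumption the paper leaves implicit, and it is harmless in the intended application because the construction of $T_{A,I}$ saturates out the locus where $J_{\mathbf f}(p_i)$ drops below rank $\dim X+1$.
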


\begin{proof}
  By construction, the projectivization of the row space of $J_{\mathbf f}(S)$ is $\langle T_{\varphi(p_1)}X,\ldots, T_{\varphi(p_r)}X\rangle$, and so the result follows by \Cref{def: Terracini locus with tangent spaces} and \eqref{eq: subabundant}.
\end{proof}

\begin{lemma}
\label{lemma: stacked jacobian for ideals}
  Suppose $X\subset\PP^n$ is a variety with $I(X)$ generated by $\mathbf f$.  If  $S$ consists of two smooth points of $X$, then $S\in\Ter_2(X)$ if and only if $J_{\mathbf f}^T(S)$ has rank less than $\min\{2\codim X, n+1\}$.
\end{lemma}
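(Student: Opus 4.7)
The plan is to translate the geometric condition defining $\Ter_2(X)$ into a rank condition on the stacked Jacobian, using the fact that tangent spaces at smooth points of $X$ are cut out by the Jacobian of a generating set of $I(X)$, together with Grassmann's identity. Concretely, for each smooth point $p_i\in X$ the affine tangent cone $V_i:=\widehat{T_{p_i}X}\subseteq\kk^{n+1}$ coincides with $\ker J_{\mathbf f}^T(p_i)$, so smoothness gives $\dim V_i=\dim X+1$ and $\rank J_{\mathbf f}^T(p_i)=\codim X$.

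Next, I would interpret the stacked matrix $J_{\mathbf f}^T(S)$, obtained by concatenating $J_{\mathbf f}^T(p_1)$ and $J_{\mathbf f}^T(p_2)$ row-wise. By construction its kernel is exactly $V_1\cap V_2$, so
$$
\dim(V_1\cap V_2)=(n+1)-\rank J_{\mathbf f}^T(S).
$$
Since the affine cone over $\langle T_{p_1}X, T_{p_2}X\rangle$ is $V_1+V_2$, Grassmann's identity yields
$$
\dim\langle T_{p_1}X, T_{p_2}X\rangle
\;=\;
\dim(V_1+V_2)-1
\;=\;
2(\dim X+1)-1-\dim(V_1\cap V_2).
$$

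Plugging this into \Cref{def: Terracini locus with tangent spaces}, $S\in\Ter_2(X)$ iff $\dim\langle T_{p_1}X, T_{p_2}X\rangle<2\dim X+1$, iff $\dim(V_1\cap V_2)>0$, iff $\rank J_{\mathbf f}^T(S)<n+1$. To recover the minimum formulation, one notes that $J_{\mathbf f}^T(S)$ has $n+1$ columns and is built from two blocks each of rank $\codim X$, so its rank is at most $\min\{2\codim X,n+1\}$. In the subabundant regime governed by \eqref{eq: subabundant} for $r=2$, namely $2(\dim X+1)\le n+1$, one has $2\codim X\ge n+1$, so the two upper bounds agree and the condition $\rank J_{\mathbf f}^T(S)<n+1$ can be written as $\rank J_{\mathbf f}^T(S)<\min\{2\codim X,n+1\}$.

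The proof is essentially bookkeeping; the only delicate point is to verify that $\min\{2\codim X,n+1\}$ collapses to $n+1$ precisely in the regime where $\Ter_2(X)$ is a proper subset of $X^{(2)}$, so that the stated rank inequality is a faithful reformulation of the Terracini condition coming from \Cref{def: Terracini locus with tangent spaces}.
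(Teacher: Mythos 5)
Your argument is correct and matches the paper's proof essentially step for step: both identify the kernel of the stacked matrix $J_{\mathbf f}^T(S)$ with the (cone over the) intersection $T_{p_1}X\cap T_{p_2}X$, apply Grassmann's formula together with rank--nullity to convert the Terracini condition into $\rank J_{\mathbf f}^T(S)<n+1$, and then use the subabundance inequality \eqref{eq: subabundant} for $r=2$ to conclude that $n+1\le 2\codim X$, so the stated minimum is $n+1$. No substantive differences to report.
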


\begin{proof}
  By construction, the projectivization of the null space of $J_{\mathbf f}^T(S)$ is $ T_{p_1}X\cap T_{p_2}X$, and so by Grassmann's formula and the rank-nullity theorem, we have
  \begin{equation}
    \dim\langle  T_{p_1}X, T_{p_2}X\rangle = 2(\dim X + 1)-(n + 1 - \rank J_{\mathbf f}^T(S)) - 1.
  \end{equation}
  By \Cref{def: Terracini locus with tangent spaces}, we see that $S\in\Ter_2(X)$ precisely when $\rank J_{\mathbf f}^T(S)< n +1$.  Also, by \eqref{eq: subabundant}, $2(n - \codim X + 1)\leq n + 1$, which implies that $n + 1\leq 2\codim X$.
\end{proof}

Recall that $X^{(r)}$ is the quotient of the ordinary $r$-fold product $X^r$ by the group action of the symmetric group $\mathfrak S_r$, and thus there exists a natural finite map $\pi:X^r\to X^{(r)}$.  In particular, the ``unsymmetrized'' Terracini locus $\pi^{-1}(\Ter_r(X))$ has the same dimension as $\Ter_r(X)$.  In the case where $X$ is the image of a rational map as in Lemma \ref{lemma: stacked jacobian for parametrizations}, we can further pull back to $(\PP^n)^r$ and examine $(\varphi\times\cdots\times\varphi)^{-1}(\pi^{-1}(\Ter_r(X)))$.

Our goal is to compute the ideal of the unsymmetrized Terracini locus in the homogeneous coordinate ring of $(\PP^n)^r$, that is, $Q=\kk[z_{00},\ldots,z_{(r - 1)n}]$ with the standard $\ZZ^r$-grading ($\deg z_{ij}=(0,\ldots,1,\ldots,0)$ with the 1 in the $(i+1)$th entry). Then the dimension of $\Ter_r(X)$ will be the Krull dimension of this ideal minus $r$ in the case of \cref{lemma: stacked jacobian for ideals}.  In the case of \cref{lemma: stacked jacobian for parametrizations}, this gives us a lower bound on the dimension, since we may lose components involving the exceptional set when pulling back to $(\PP^n)^r$.

Given an $s\times t$ matrix $A$ with entries in $R$, let $A(z_{i\bullet})$ be the matrix be obtained by substituting $z_{ij}$ for $x_j$ in $A$ and $A(z_{\bullet\bullet})$ the stacked matrix obtained by concatenating the rows of each $A(z_{i\bullet})$. For all $0\leq i<j\leq r - 1$, let $Z_{ij}=\begin{pmatrix} z_{i0} & \ldots & z_{in} \\ z_{j0} & \ldots & z_{jn}\end{pmatrix}$.  For a given positive integer $k$ and matrix $B$ with entries in $Q$, let $\Sigma_k(B)$ be the determinantal ideal generated by the $k\times k$ minors of $B$.  For a given ideal $I$ of $R$, let $\ell=\rank A$ if $I=(0)$ and $\ell=\codim I$ otherwise, let $I(z_{i\bullet})$ be the ideal of $Q$ generated by the generators of $I$ but substituting $z_{ij}$ for $x_j$, and let $I(z_{\bullet\bullet})=\sum_i I(z_{i\bullet})$. 
 Finally, define the ideal 
  \begin{equation*}
  T_{A, I} =
    \sqrt{\left(\left(\Sigma_{\min\{r\ell, t\}}(A(z_{\bullet\bullet})) + I(z_{\bullet\bullet})\right):\left(\bigcap_{i,j}\Sigma_2(Z_{ij})\right)^\infty\right):\left(\bigcap_{i}\Sigma_{\ell}(A(z_{i\bullet}))\right)^\infty}
  \end{equation*}
  of $Q$. Note that $\Sigma_2(Z_{ij})$ is the ideal of duplicate points in the $i$th and $j$th components of $(\PP^n)^r$ and $\Sigma_{\ell}(A(z_{i\bullet}))$ is the ideal of points in the $i$th component where $A$ has rank less than $\ell$.  By saturating out these ideals and applying the Nullstellensatz, we obtain the following result.

  \begin{lemma}
    \label{lemma: construction of ideal of terracini locus}
    $T_{A,I}$ is the ideal of all tuples $(p_1,\ldots,p_r)$ of distinct points in $V(I)^r$ for which  $A(p_i)$ has rank at least $\ell$ but $\rank A(\{p_1,\ldots,p_r\})<\min\{r\ell, t\}$.
  \end{lemma}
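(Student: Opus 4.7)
The plan is to compute $\V(T_{A,I})$ set-theoretically and identify it as the Zariski closure of the set of tuples described in the statement; since $T_{A,I}$ is defined as a radical, the Nullstellensatz will then imply that $T_{A,I}$ is precisely the vanishing ideal of that set.

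First I would unwind the base ideal $J_0 := \Sigma_{\min\{r\ell, t\}}(A(z_{\bullet\bullet})) + I(z_{\bullet\bullet})$. The summand $I(z_{\bullet\bullet}) = \sum_i I(z_{i\bullet})$ cuts out $\V(I)^r$, while the determinantal ideal $\Sigma_{\min\{r\ell, t\}}(A(z_{\bullet\bullet}))$ vanishes at a tuple $(p_1,\ldots,p_r)$ precisely when $\rank A(\{p_1,\ldots,p_r\}) < \min\{r\ell, t\}$. Hence $\V(J_0)$ consists of all tuples in $\V(I)^r$ satisfying the rank bound, but with no restriction yet either on distinctness of the $p_i$ or on the individual values of $\rank A(p_i)$.

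Next I would identify the two loci being saturated away. Using the elementary fact $\V(\bigcap_\alpha K_\alpha) = \bigcup_\alpha \V(K_\alpha)$, the locus $W_1 := \V(\bigcap_{i,j}\Sigma_2(Z_{ij}))$ is the union over pairs of indices of loci where the two rows of $Z_{ij}$ are linearly dependent, i.e., where $p_i$ and $p_j$ coincide in $\PP^n$; similarly $W_2 := \V(\bigcap_i \Sigma_\ell(A(z_{i\bullet})))$ is the union over $i$ of loci where $\rank A(p_i) < \ell$. Thus the conditions ``the $p_i$ are pairwise distinct'' and ``each $A(p_i)$ has rank at least $\ell$'' are precisely the complements of $W_1$ and $W_2$.

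Finally I would invoke three standard commutative-algebra facts: iterated saturation satisfies $(J:K_1^\infty):K_2^\infty = J:(K_1K_2)^\infty$; the ideals $K_1K_2$ and $K_1\cap K_2$ share a radical (so they yield the same saturation); and $\V(\sqrt{J:K^\infty})$ is the union of those irreducible components of $\V(J)$ not contained in $\V(K)$. Combining these, $\V(T_{A,I})$ equals the union of irreducible components of $\V(J_0)$ not contained in $W_1 \cup W_2$, which is the Zariski closure of the locally closed set $\V(J_0) \setminus (W_1 \cup W_2)$ -- exactly the set described in the lemma. The one subtle point to track carefully is that $\V(J_0)$ typically does have primary components supported entirely in $W_1 \cup W_2$ (for instance, diagonals where the rank condition is vacuous); the two saturations are designed precisely to strip these away, after which the Nullstellensatz delivers the stated description of $T_{A,I}$.
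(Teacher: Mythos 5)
Your argument is correct and is essentially the paper's own: the paper proves this lemma in one line by observing that the two colon operations saturate away the duplicate-point locus and the low-rank locus and then invoking the Nullstellensatz on the radical. You have simply made explicit the standard facts it leaves implicit, namely that iterated saturation agrees with saturation by the product (equivalently the intersection) and that $V\bigl(\sqrt{J:K^\infty}\bigr)=\overline{V(J)\setminus V(K)}$.
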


\Cref{lemma: construction of ideal of terracini locus} generalizes both of our settings.  When $X\subset\PP^m$ is parametrized by $\mathbf f$, then we use $A=J_{\mathbf f}$ and $I=(0)$.  In this case, $\ell=\rank J_{\mathbf f}=\dim X 
 + 1$, and by applying \Cref{lemma: stacked jacobian for parametrizations}, we get the following result.

  \begin{proposition}
  If $X\subset\PP^m$ is parametrized by the rational map $\varphi:\PP^n\dashrightarrow\PP^m$ defined by $\mathbf f$, then $\dim\Ter_r(X)\geq\dim T_{J_{\mathbf f},(0)} - r$.
  \end{proposition}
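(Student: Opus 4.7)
The plan is to interpret $V = V(T_{J_{\mathbf f}, (0)}) \subset (\PP^n)^r$ geometrically, map it into $\Ter_r(X)$ via $\varphi$, and compare dimensions with a fiber-dimension count.  First I would invoke \Cref{lemma: construction of ideal of terracini locus} with $A = J_{\mathbf f}$, $I = (0)$, $\ell = \rank J_{\mathbf f} = \dim X + 1$, and $t = m+1$, so that $V$ is the closure in $(\PP^n)^r$ of the set of tuples of distinct points $(p_1, \ldots, p_r)$ for which every $J_{\mathbf f}(p_i)$ has maximal rank $\dim X + 1$ while the stacked Jacobian $J_{\mathbf f}(\{p_1, \ldots, p_r\})$ has rank strictly less than $\min\{r(\dim X+1), m+1\}$.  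Since $T_{J_{\mathbf f}, (0)}$ is multi-homogeneous in the $\ZZ^r$-graded polynomial ring $Q$, the Krull dimension of $Q/T_{J_{\mathbf f}, (0)}$ exceeds the dimension of the associated multi-projective subvariety of $(\PP^n)^r$ by exactly $r$, so $\dim V = \dim T_{J_{\mathbf f}, (0)} - r$.

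Next I would introduce the rational map $\psi : V \dashrightarrow X^{(r)}$ defined by $(p_1, \ldots, p_r) \mapsto \{\varphi(p_1), \ldots, \varphi(p_r)\}$, which is well defined on a dense open subset of $V$ because the saturations in the construction of $T_{J_{\mathbf f}, (0)}$ ensure that $\varphi$ is defined at each $p_i$ and that the points $\varphi(p_i)$ lie in $X_{sm}$.  By \Cref{lemma: stacked jacobian for parametrizations}, the rank condition cutting out $V$ is equivalent to $\{\varphi(p_1), \ldots, \varphi(p_r)\} \in \Ter_r(X)$, so $\psi$ factors through $\Ter_r(X) \subset X^{(r)}$.

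The final step is the fiber count.  Since $\varphi$ parametrizes $X$, it is generically injective onto its image, so for a generic point $\{x_1, \ldots, x_r\} \in \psi(V)$ each preimage $\varphi^{-1}(x_i)$ reduces to a single point of $\PP^n$ and the fiber $\psi^{-1}(\{x_1, \ldots, x_r\})$ consists only of the $r!$ orderings of the corresponding tuple.  Thus $\psi$ is generically finite onto its image, and the theorem on the dimension of fibers yields $\dim V = \dim \overline{\psi(V)} \leq \dim \Ter_r(X)$.  Combined with the multi-projective identity from the first step, this gives
\begin{equation*}
\dim T_{J_{\mathbf f}, (0)} - r = \dim V \leq \dim \Ter_r(X),
\end{equation*}
which is the desired bound.

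The reason one only obtains an inequality, rather than an equality, is that $\psi$ need not be surjective onto $\Ter_r(X)$: any irreducible component whose general element $\{x_1, \ldots, x_r\}$ contains some $x_i$ lying in the $\varphi$-image of the indeterminacy locus (equivalently, of the locus where $J_{\mathbf f}$ drops rank) is stripped away by the two saturations in the construction of $T_{J_{\mathbf f}, (0)}$.  The most delicate point to verify carefully is that these saturations cut away only lower-dimensional pieces of the preimage, so that what survives still dominates the smooth, generic part of $\psi(V)$; once this is granted, the dimension comparison above is immediate.
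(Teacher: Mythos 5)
Your proposal is correct and follows essentially the same route as the paper, which simply combines \Cref{lemma: construction of ideal of terracini locus} (with $A=J_{\mathbf f}$, $I=(0)$, $\ell=\dim X+1$) with \Cref{lemma: stacked jacobian for parametrizations} and the observation that passing from the multigraded ideal to the multiprojective variety drops the dimension by $r$; your fiber-dimension count and your explanation of why only an inequality is obtained match the paper's remark about losing components in the exceptional set. The one small imprecision is your claim that a parametrization is ``generically injective'': this need not hold (e.g.\ for a multiple cover), but all you actually need is that $\varphi$ be generically finite, which is the implicit hypothesis under which the dimension comparison goes through.
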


  When $X\subset\PP^n$ is the vanishing locus of the ideal generated by $\mathbf f$, we use $A=J_{\mathbf f}^T$ and $I=(\mathbf f)$.  In this case, $\ell=\codim I=\codim X$, and we get the following result by applying \Cref{lemma: stacked jacobian for ideals}.

  \begin{proposition}
    If $X\subset\PP^n$ is a variety with $I(X)=(\mathbf f)$, 
    then $\dim\Ter_2(X)=\dim T_{J_{\mathbf f}^T,(\mathbf f)} - 2$.
  \end{proposition}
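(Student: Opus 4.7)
The plan is to combine the two preceding lemmas to show that $T_{J_{\mathbf f}^T,(\mathbf f)}$ is precisely the multihomogeneous ideal of the unsymmetrized $2$-Terracini locus $\pi^{-1}(\Ter_2(X))\subseteq X^2\subset(\p^n)^2$, and then to translate Krull dimension into projective dimension via the standard multigraded correspondence.

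First, I would apply \Cref{lemma: construction of ideal of terracini locus} with $A=J_{\mathbf f}^T$, $I=(\mathbf f)$, and $r=2$. Here $t=n+1$ and $\ell=\codim X$, because $I(X)=(\mathbf f)$ implies that at a smooth point $p\in X$ the Jacobian has rank exactly $\codim X$. The lemma then identifies the zero set of $T_{J_{\mathbf f}^T,(\mathbf f)}$ inside $(\p^n)^2$ as the set of ordered pairs $(p_1,p_2)$ of distinct points of $X$ at which $J_{\mathbf f}^T$ has full rank $\codim X$ (so both points are smooth on $X$) and for which the stacked matrix $J_{\mathbf f}^T(\{p_1,p_2\})$ drops rank below $\min\{2\codim X,n+1\}$. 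By \Cref{lemma: stacked jacobian for ideals}, this last condition is exactly the defining condition of $\Ter_2(X)$ applied to the smooth pair $\{p_1,p_2\}$. Since the saturation in the definition of $T_{A,I}$ ensures radicality and excludes the diagonal, the vanishing locus of $T_{J_{\mathbf f}^T,(\mathbf f)}$ in $(\p^n)^2$ is the Zariski closure of the set of ordered pairs of distinct smooth points of $X$ whose images form a point of $\Ter_2(X)$, which is precisely $\pi^{-1}(\Ter_2(X))$, where $\pi\colon X^2\to X^{(2)}$ is the natural finite quotient by $\mathfrak S_2$.

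Next, I would invoke the standard fact that for a multihomogeneous radical ideal $J\subset Q=\kk[z_{00},\dots,z_{1n}]$ defining a subvariety $V\subseteq(\p^n)^2$, one has $\dim Q/J=\dim V+2$, since each $\p^n$ factor contributes one dimension to the affine cone. Combined with the preceding paragraph, this gives $\dim T_{J_{\mathbf f}^T,(\mathbf f)}=\dim\pi^{-1}(\Ter_2(X))+2$. Because $\pi$ is a finite surjection onto $\Ter_2(X)$, one has $\dim\pi^{-1}(\Ter_2(X))=\dim\Ter_2(X)$, yielding the claimed equality $\dim\Ter_2(X)=\dim T_{J_{\mathbf f}^T,(\mathbf f)}-2$.

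The main subtlety, compared to the parametrization case where one only gets an inequality, is why there is no loss of dimension here: in the parametrization setting the pullback via $\varphi\times\varphi$ may pick up extra components over the exceptional locus of $\varphi$, but here the inclusion $X^2\hookrightarrow(\p^n)^2$ is a genuine closed embedding, so $\pi^{-1}(\Ter_2(X))$ is cut out inside $(\p^n)^2$ without introducing any spurious components. I expect the only step requiring mild care is verifying that the saturation against $\bigcap_i\Sigma_\ell(A(z_{i\bullet}))$ exactly removes the singular-point components of $X^2$ (which is immediate, since a point of $X$ is singular precisely when $J_{\mathbf f}^T$ has rank less than $\codim X$ there), and that the saturation against $\bigcap_{i,j}\Sigma_2(Z_{ij})$ removes the diagonal, so that what remains is indeed the closed subset described by \Cref{def: Terracini locus with tangent spaces}.
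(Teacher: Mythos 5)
Your proposal is correct and follows essentially the same route as the paper: apply \Cref{lemma: construction of ideal of terracini locus} with $A=J_{\mathbf f}^T$, $I=(\mathbf f)$, $\ell=\codim X$, identify the resulting locus via \Cref{lemma: stacked jacobian for ideals}, and convert Krull dimension to the dimension of the unsymmetrized Terracini locus (which equals $\dim\Ter_2(X)$ since $\pi$ is finite). The only difference is that you spell out the dimension bookkeeping and the reason equality holds here (no exceptional locus, unlike the parametrized case), which the paper records in the surrounding discussion rather than in a formal proof.
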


These results are implemented in a \textit{TerraciniLoci} package for the computer algebra system \textit{Macaulay2} \cite{M2}.  The \textit{CorrespondenceScrolls} package \cite{CorrespondenceScrolls} is used to construct the multigraded ring $Q$, and the \textit{FastMinors} package \cite{FastMinors} is used the speed up the minor computations.  See \url{https://github.com/d-torrance/terracini-loci} for the code and examples of many of the results of this paper, including Terracini loci of some rational and elliptic normal curves supporting \cref{corol: rational and elliptic normal curves}, the rational octic in $\PP^4$ mentioned after \cref{coroll: curves embedded with non-complete linear systems}, the rational quintic in $\PP^4$ from \cref{example: rational quintic in P4 without Terracini points}, the del Pezzo surfaces from \cref{coroll: Ter2 per delpezzos} and \cref{cor: ter3 of del pezzo}, the Veronese cubic surface supporting \cref{propo: all we know about the Veronese}, and several Segre-Veronese surfaces supporting \cref{theorem: first nonempty SV}, including the del Pezzo surface not addressed in \cref{section: del Pezzo}.
Note that despite the assumption that $\kk$ be algebraically closed of characteristic 0, we use $\mathbb Q$ or $\ZZ/p\ZZ$ for moderately large prime $p$ to avoid floating point errors.

\section*{Acknowledgements}
This project started during the semester program Algebraic Geometry with Applications to Tensors and Secants (AGATES), held in Warsaw in 2022, where Luca Chiantini proposed Terracini loci as a research topic. We wish to thank Jarosław Buczyński, Fulvio Gesmundo, Alex Massarenti, Tim Seyennaeve, Trygve Johnsen and Mahrud Sayrafi for their support and for helpful discussion.\\
This work is partially supported by the Thematic Research Programme ``Tensors: geometry, complexity and quantum entanglement'', University of Warsaw, Excellence Initiative - Research University and the Simons Foundation Award No. 663281 granted to the Institute of Mathematics of the Polish Academy of Sciences for the years 2021-2023.\\
Galuppi acknowledges support by the National Science Center, Poland, project ``Complex contact manifolds and geometry of secants'', 2017/26/E/ST1/00231. Santarsiero is supported by the Deutsche Forschungsgemeinschaft (DFG, German Research Foundation) -- Projektnummer 445466444. Turatti is partially supported by the project Pure Mathematics in Norway funded by Trond Mohn Foundation and by Tromsø Research Foundation grant agreement 17matteCR.

\bibliographystyle{alpha}
\bibliography{References_GSTT.bib}

\begin{thebibliography}{CHMN18}

\bibitem[ABT01]{literature}
E.~Arrondo, M.~Bertolini, and C.~Turrini.
\newblock A focus on focal surfaces.
\newblock {\em Asian J. Math.}, 5(3):535--560, 2001.

\bibitem[AH95]{AH}
J.~Alexander and A.~Hirschowitz.
\newblock Polynomial interpolation in several variables.
\newblock {\em J. Algebraic Geom.}, 4(2):201--222, 1995.

\bibitem[BB23]{BB23}
E.~Ballico and M.~C. Brambilla.
\newblock On minimally {T}erracini finite sets of points in projective spaces.
\newblock {\em preprint arXiv:2306.07161}, 2023.

\bibitem[BBS23]{BBS}
E.~Ballico, A.~Bernardi, and P.~Santarsiero.
\newblock Terracini locus for three points on a {S}egre variety.
\newblock {\em Asian J. Math.}, 27(3), 2023.

\bibitem[BC05]{halfdef}
C.~Bocci and L.~Chiantini.
\newblock Triple points imposing triple divisors and the defective hierarchy.
\newblock In {\em Projective varieties with unexpected properties}, pages
  35--49. Walter de Gruyter, Berlin, 2005.

\bibitem[BC21]{BC}
E.~Ballico and L.~Chiantini.
\newblock On the {T}erracini locus of projective varieties.
\newblock {\em Milan Journal of Mathematics}, 89(1):1--17, 2021.

\bibitem[BC23]{BC23}
E.~Ballico and L.~Chiantini.
\newblock Terracini loci of curves.
\newblock {\em Revista Matemática Complutense}, DOI
  10.1007/s13163-023-00467-0, 2023.

\bibitem[Bea96]{beauville}
A.~Beauville.
\newblock {\em Complex algebraic surfaces}, volume~34 of {\em London
  Mathematical Society Student Texts}.
\newblock Cambridge University Press, Cambridge, second edition, 1996.

\bibitem[BGI11]{AGI}
A.~Bernardi, A.~Gimigliano, and M.~Id\`a.
\newblock Computing symmetric rank for symmetric tensors.
\newblock {\em J. Symbolic Comput.}, 46(1):34--53, 2011.

\bibitem[BV18]{BV18}
P.~Breiding and N.~Vannieuwenhoven.
\newblock The condition number of join decompositions.
\newblock {\em SIAM Journal on Matrix Analysis and Applications},
  39(1):287--309, 2018.

\bibitem[CG23]{CG23}
L.~Chiantini and F.~Gesmundo.
\newblock Decompositions and {T}erracini loci of cubic forms of low rank.
\newblock {\em preprint arXiv:2302.03715}, 2023.

\bibitem[CGG05]{CGG05}
M.~V. Catalisano, A.~V. Geramita, and A.~Gimigliano.
\newblock Higher secant varieties of {S}egre-{V}eronese varieties.
\newblock In {\em Projective varieties with unexpected properties}, pages
  81--107. Walter de Gruyter, Berlin, 2005.

\bibitem[CHMN18]{unexpected}
D.~Cook, II, B.~Harbourne, J.~Migliore, and U.~Nagel.
\newblock Line arrangements and configurations of points with an unexpected
  geometric property.
\newblock {\em Compos. Math.}, 154(10):2150--2194, 2018.

\bibitem[Dal84]{dale}
M.~Dale.
\newblock Terracini's lemma and the secant variety of a curve.
\newblock {\em Proceedings of the London Mathematical Society. Third Series},
  49(2):329--339, 1984.

\bibitem[Dim23]{dimca}
A.~Dimca.
\newblock Unexpected curves in $\mathbb{P}^2$, line arrangements, and minimal
  degree of {J}acobian relations.
\newblock {\em J. Commut. Algebra}, 15(1):15--30, 2023.

\bibitem[ESS]{CorrespondenceScrolls}
D.~Eisenbud, F.~O. Schreyer, and A.~Sammartano.
\newblock {CorrespondenceScrolls: A \emph{Macaulay2} package. Version~0.6}.
\newblock A \emph{Macaulay2} package available at
  \url{https://github.com/Macaulay2/M2/tree/master/M2/Macaulay2/packages}.

\bibitem[FGST21]{unexpectedquartic}
\L. Farnik, F.~Galuppi, L.~Sodomaco, and W.~Trok.
\newblock On the unique unexpected quartic in {$\mathbb P^2$}.
\newblock {\em J. Algebraic Combin.}, 53(1):131--146, 2021.

\bibitem[GS]{M2}
D.~R. Grayson and M.~E. Stillman.
\newblock Macaulay2, a software system for research in algebraic geometry.
\newblock Available at \url{https://macaulay2.com/}.

\bibitem[Har77]{hartshorne}
R.~Hartshorne.
\newblock {\em Algebraic geometry}.
\newblock Graduate Texts in Mathematics, No. 52. Springer-Verlag, New
  York-Heidelberg, 1977.

\bibitem[Joh89]{Johnsen89}
T.~Johnsen.
\newblock Local properties of secant varieties in symmetric products. part ii.
\newblock {\em Transactions of The American Mathematical Society},
  313(1):205--220, 1989.

\bibitem[Lan12]{landsberg}
J.~M. Landsberg.
\newblock {\em Tensors: geometry and applications}, volume 128 of {\em Graduate
  Studies in Mathematics}.
\newblock American Mathematical Society, Providence, RI, 2012.

\bibitem[LM23]{LM23}
A.~Laface and A.~Massarenti.
\newblock Ample bodies and terracini loci of projective varieties.
\newblock {\em Bulletin of the London Mathematical Society}, 2023.

\bibitem[MRSY23]{FastMinors}
B.~Martinova, M.~Robinson, K.~Schwede, and Y.~Yao.
\newblock Fast{M}inors package for {M}acaulay2.
\newblock {\em J. Softw. Algebra Geom.}, 13(1):13--31, 2023.

\bibitem[Pie81]{piene91}
R.~Piene.
\newblock Cuspidal projections of space curves.
\newblock {\em Math. Ann.}, 256(1):95--119, 1981.

\bibitem[RS12]{ranstu}
K.~Ranestad and B.~Sturmfels.
\newblock On the convex hull of a space curve.
\newblock {\em Adv. Geom.}, 12(1):157--178, 2012.

\bibitem[Ter11]{terracini}
A.~Terracini.
\newblock Sulle {$V_k$} per cui la varietà degli {$S_h$} $(h+1)$-seganti ha
  dimensione minore dell'ordinario.
\newblock {\em Rendiconti del Circolo Matematico di Palermo (1884-1940)},
  31:392--396, 1911.

\end{thebibliography}
\end{document}